\newtheorem{theorem}{Theorem}[section]
\newtheorem{proposition}[theorem]{Proposition}
\newtheorem{definition}[theorem]{Definition}
\newtheorem{corollary}[theorem]{Corollary}
\newtheorem{lemma}[theorem]{Lemma}
\theoremstyle{remark}
\newtheorem{remark}[theorem]{Remark}
\def\<{\langle}
\def\>{\rangle}
\def\R{\mathbb{R}}
\begin{document}
\title{\bf{Can One Perturb the Equatorial Zone on a Sphere with Larger Mean Curvature? }}

\author{Baichuan Hu%
  \thanks{School of Mathematical Sciences, Peking University,
 Beijing 100871, People's Republic of China. \texttt{1900010607@pku.edu.cn}. }
  \and
Xiang Ma%
  \thanks{LMAM, School of Mathematical Sciences, Peking University,
 Beijing 100871, P.R. China. \texttt{maxiang@math.pku.edu.cn}, Fax:+86-010-62751801. Corresponding author. Supported by NSFC grant 11831005.}
\and
 Shengyang Wang%
  \thanks{School of Mathematical Sciences, Peking University,
 Beijing 100871, People's Republic of China. \texttt{1900010752@pku.edu.cn}. Supported by the Undergraduate Student Research Study program of Peking University for the year 2021-2022.} }

\date{\today}
\maketitle

\begin{center}
{\bf Abstract}
\end{center}

\hspace{2mm}
We consider the mean curvature rigidity problem of an equatorial zone on a sphere which is symmetric about the equator with width $2w$. There are two different notions on rigidity, i.e. strong rigidity and local rigidity. We prove that for each kind of these rigidity problems, there exists a critical value such that the rigidity holds true if, and only if, the zone width is smaller than that value.
For the rigidity part, we used the tangency principle and a specific lemma (the trap-slice lemma we established before). For the non-rigidity part, we construct the nontrivial perturbations by a gluing procedure called the round-corner lemma using the Delaunay surfaces.\\

{\bf Keywords:}  spheres, mean curvature, rigidity theorem, infinitesimal deformation, Delaunay surfaces, tangency principle, gluing construction.  \\

{\bf MSC(2010):\hspace{2mm} 53C24, 53C42; see also 52C25.}

\section{Introduction}

The central theme in this paper is about the so-called \emph{mean curvature rigidity} phenomenon.
The first result along this direction is by Gromov \cite{Gromov}, who pointed out that a hyperplane $M$ in a Euclidean space $\mathbb{R}^{n+1}$ cannot be perturbed on a compact set $S$ so
that the perturbed hypersurface $\Sigma$ has mean curvature $H_{\Sigma} \ge 0$ unless $H_{\Sigma}\equiv 0$ and $\Sigma=M$ identically. Very soon Souam \cite{Souam} gave a simple proof of this fact using the \emph{Tangency Principle} and established rigidity results for horospheres, hyperspheres, and hyperplanes in the hyperbolic space $\mathbb{H}^{n+1}$. For other types of rigidity theorems on spheres and hemispheres, we just mention the famous Min-Oo's conjecture \cite{MinOo} and a series of beautiful work \cite{Brendle, HangFB-06}.

In a previous work \cite{ChenMaWang}, we found that similar mean curvature rigidity result holds
for compact CMC hypersurfaces like spheres, with the restriction that the perturbed part is
no more than a hemisphere.
In other words, we considered perturbation of a spherical cap whose boundary is fixed up to $C^2$.
Here we turn to perturbations on a doubly connected domain of a sphere, and our aim is to find out when such rigidity theorem still holds true.
This is done by detailed analysis and comparison with the Delaunay CMC surfaces
and gluing constructions in the 3-dim space.
Yet this should be not difficult to generalize to any n-dimensional space.

Generally, suppose $N^{n+1}$ is a Riemannian manifold, and $M^n$ is an embedded hypersurface in it. The second fundamental form of $M^n$ and the mean curvature $\widetilde{H}(x), x \in M^n$ are defined as usual with respect to a given normal unit vector field $\widetilde{\pmb{n}} \in \Gamma (T^{\perp} M)$.
$S \subset M^n$ is a precompact open domain on $M^n$.

\begin{definition}
For $k \ge 2$, a $C^k$-perturbation of $S\subset M^n $ refers to another $C^k$ embedding $\Sigma:M^n \rightarrow N^{n+1}$ with $\Sigma = id$ in $N^{n+1} \backslash S$. When $k = \infty$, we say the perturbation is smooth.
\end{definition}

When there is no confusion, we will also use $\Sigma$ to represent $\Sigma(M^n)$. And we will only talk about the smooth perturbation (which is easy to generalize to other $C^k$-perturbations.\\

There also exists a unit normal vector field $\pmb{n} \in \Gamma (T^{\perp} \Sigma)$ with $\pmb{n} = \widetilde{\pmb{n}}$ in $M^n \backslash S$, which gives the mean curvature of $\Sigma$, defined as $H(x) \triangleq H(\Sigma(x)), x\in M$. Given a constant $\alpha \in \R$, we say $H(\Sigma) \ge \alpha$ iff $\forall x \in S$, $H(x) \ge \alpha$ (similar for $H(\Sigma) \le \alpha$).

In convex geometry and isometric deformation problems, usually we talk about two kinds of notions about deformations and rigidity.
One is the so-called \emph{infinitesimal deformations} which exist in an arbitrarily small neighborhood of the original hypersurface; one can imagine that it comes from a one-parameter deformation process. The other is \emph{large-scale} perturbations which have to go far away.
Here we need also to distinguish between these two kinds of rigidity.

\begin{definition}
Given an open domain $\Theta \subset N^{n+1}$ satisfying $\overline{S} \subset \Theta$. We say that $S$ has $H^+$(or $H^-$) rigidity in $\Theta$ if for any perturbation $\Sigma$ with $\Sigma(S) \subset \Theta$, the two statements below are equivalent:\\
(1)$H(x) \ge \widetilde{H}(x)$(or $H(x) \le \widetilde{H}(x)$), $\forall x \in S$ \\
(2)$\Sigma = id$ in $S$

We say $S$ has local $H^+$(or $H^-$) rigidity, if $\exists \Theta \subset N^{n+1}$ satisfying $\overline{S} \subset \Theta$, and $S$ has $H^+$(or $H^-$) rigidity in $\Theta$.

When $\Theta = N^{n+1}$, we simply say $S$ has (strong) $H^+$(or $H^-$) rigidity.

We can simply say equivalently that $S$ is (local/strong) $H^+/H^-$ rigid.
\end{definition}

\begin{remark}\label{monotonicity}
It is obvious that each of these four kinds of rigidity has monotonicity property with respect to the domain $S$, i.e. for two precompact open domain $S_1,S_2 \subset M^{n}$ with $S_1 \supset S_2$, we have:\\
(1) If $S_1$ has $H^+$ or $H^-$ rigidity, then this is also true for $S_2$.\\
(2) If $S_1$ has local $H^+$ or $H^-$ rigidity, then $S_2$ also has this rigidity property.

We can find that the \emph{strong rigidity} considers a large-scale perturbation of $S$. However, for the local $H$ rigidity, we only need to consider a local deformation of $S$, since we only have to prove the existence of some $\Theta$ which can be arbitrarily small enough. Notice that the strong rigidity implies the local rigidity.
\end{remark}

As a demonstration of these rigidity notions, we review and summarize our previous results as below:

\begin{theorem}\cite{ChenMaWang}
A spherical cap $S\subset S^n$ is $H^+$ rigid if and only if it is part of a hemisphere.
When $S$ is contained in a hemisphere, it is local $H^-$ rigid in a certain dumb-bell shaped domain $\Theta$.
\end{theorem}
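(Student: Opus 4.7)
The theorem splits into three independent implications, which I address in turn.

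For the strong $H^+$ rigidity when $S$ lies in a hemisphere, the plan is a sliding-sphere argument with the tangency principle. Place $\S^n$ as the unit sphere in $\R^{n+1}$ so that $\overline{S}$ sits in the closed upper hemisphere, and consider the downward translates $\S^n_t := \S^n - t\e_{n+1}$. For large $t$ the translated sphere lies strictly below the perturbation $\Sigma$; decrease $t$ to the first value $t_\star\ge 0$ at which $\S^n_t$ meets $\Sigma$. If $t_\star>0$ the contact is interior to $S$, and since $H_\Sigma\ge 1 = H_{\S^n_{t_\star}}$ the tangency principle forces $\Sigma$ to coincide locally with $\S^n_{t_\star}$; propagating this by connectedness contradicts the boundary condition $\Sigma=\mathrm{id}$ on $\partial S$. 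Hence $t_\star=0$ and $\Sigma\equiv\S^n$ on $S$.

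For the converse of the first part, when $S$ properly contains a closed hemisphere one must exhibit a nontrivial perturbation with $H_\Sigma\ge 1$. The plan is to choose a thin spherical annulus lying strictly inside $S$, replace it by a piece of a Delaunay unduloid of mean curvature slightly exceeding $1$ (such unduloids form a one-parameter family accumulating at $\S^n$), and reattach to the unperturbed part of $\S^n$ by a $C^2$ round-corner gluing. The hard part is performing the gluing while keeping $H_\Sigma\ge 1$ throughout the transition regions, which forces both the unduloid parameter and the corner profile to be tuned carefully; this is exactly where the round-corner lemma is needed.

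For the local $H^-$ rigidity when $S$ is contained in a hemisphere, the plan is to construct $\Theta\supset\overline{S}$ as a narrow tubular neighborhood of $S$ capped near $\partial S$ by two small bulbs, producing the dumb-bell shape. The trap-slice lemma of our previous work then confines any admissible $\Sigma\subset\Theta$ with $H_\Sigma\le 1$ to a thin slab around $\S^n$, ruling out large-scale excursions. One then slides a unit sphere toward $\Sigma$ from the convex side until first contact and invokes the tangency principle with $H_\Sigma\le 1 = H_{\S^n}$ to obtain local and then global coincidence $\Sigma=\mathrm{id}$ on $S$. The delicate step is calibrating the tube width and bulb radii so the trap-slice hypotheses hold while $\overline{S}$ still sits strictly inside $\Theta$.
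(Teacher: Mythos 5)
The statement is quoted from \cite{ChenMaWang}; the present paper gives no proof of it, so I can only judge your plan on its own terms, and each of its three parts has a genuine gap. In the rigidity half, the sliding-sphere argument does not work as stated, for two reasons. First, the claim that a first contact at $t_\star>0$ must occur at an interior point of $\Sigma(S)$ is false: $\Sigma=\mathrm{id}$ on all of $S^n\setminus S$ (in particular on the whole lower hemisphere), so as the translated sphere rises it typically first touches the \emph{fixed} part, e.g.\ tangentially at the antipode when $t=2$, where no contradiction is available; controlling exactly this kind of boundary/fixed-part contact is what the trap-slice lemma is designed for. Second, and more fundamentally, the orientation is backwards. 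With the inward normal, $H(\Sigma)\ge 1$ forces the nontrivial part of $\Sigma$ to bow \emph{into} the ball (cf.\ Corollary~\ref{tangawa}(1)), and at a first contact the comparison sphere touches $\hat\Sigma=\Sigma(S^n)$ from the unbounded side; computing both mean curvatures with respect to the common normal required by Theorem~\ref{thm-TP1}, the hypothesis $H_2\ge H_1$ becomes $-H(\Sigma)\ge 1$, which is exactly the opposite of what you have. Indeed a unit sphere can be externally tangent to an unduloid of mean curvature $1$ without coinciding with it, so no contradiction can follow. The correct comparison surfaces must approach from inside the ball (translated caps, undularies) within a trap, which is how the cited proof, and Section~2 here for zones, proceed; the borderline case where $S$ is exactly a hemisphere needs separate care as well.

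In the non-rigidity half, your construction never uses the hypothesis that $S$ is \emph{not} contained in a hemisphere: ``a thin spherical annulus lying strictly inside $S$'' exists for every cap, so if the plan worked it would contradict the rigidity half. The obstruction it misses is that the two gluing circles of such an annulus turn in opposite senses, so by the round-corner mechanism (Remark~\ref{-ori}) one of the two corners can only be smoothed with $H\le$ the adjacent values, not $H\ge$. The actual constructions avoid this, e.g.\ by replacing the polar portion with a piece glued along a \emph{single} circle (a translated unit-sphere cap, or a piece dipping symmetrically through a Delaunay neck as in Theorem~\ref{-Prigi}); the requirement that this single gluing circle lie strictly beyond the equator, with the correct turning direction, is precisely where ``not part of a hemisphere'' enters, and it is absent from your sketch. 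Finally, for the local $H^-$ statement, sliding a unit sphere (mean curvature exactly $1$) gives no strict inequality and suffers the same orientation and fixed-part-contact problems; the known argument foliates the dumb-bell region by comparison caps through (a circle near) $\partial S$ whose mean curvature is strictly on the right side of $1$, and your ``confinement to a thin slab'' via the trap-slice lemma is asserted rather than derived, with no verification of the trap and slice hypotheses for your tube-plus-bulbs domain.
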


In this follow-up work, we will mainly discuss those rigidity properties on doubly connected domains symmetric about the equator on the unit sphere $S^2\subset \mathbb{R}^3$.

\textbf{Convention:}\\
(1) $S^2 \subset \R^3$ is the unit sphere with radius 1, defined by $$S^2=\{ (x_1,x_2,x_3) \in \R^3 | x_1^2+x_2^2+x_3^2 =1\}$$
$S^2$ divides $\R^3$ into two connected components, and $D^3 = \{ (x_1,x_2,x_3) \in \R^3 | x_1^2+x_2^2+x_3^2 \le 1\}$ is one of them. Also, we define $P_i$ the coordinate hyperplane in $\R^3$, with $$P_i = \{ (x_1,x_2,x_3) \in \R^3 | x_i = 0\}$$
In this passage we will usually consider some curves in $P_2$, and we will use $(x_3,x_1)$ as the coordinate in $P_2$.\\
(2) Suppose $a \in (0,1)$ and $S_a$ is an annulus around the equator with width $2\arccos a$, i.e. $$S_a = \{ (x_1,x_2,x_3) \in S^2 | x_1^2+x_2^2 > a^2 \}$$
And we use $\Sigma_a$ to refer to a perturbation of $S_a$.\\
(3) Define $\widetilde{\pmb{n}}(x_1,x_2,x_3) = (-x_1,-x_2,-x_3)$ as the unit inward normal vector field on $S^2$. In this passage, if there is no other explanation, we will default that the unit normal vector fields of $\Sigma_a$ we talk about are all consistent with $\widetilde{\pmb{n}}$ at $S^2 \backslash S_a$, which is inward. And the mean curvature of $S_a$ and $\Sigma_a$ also come from it.

The main results in this paper are stated as below.

\begin{theorem}\label{thm-rigi}
For $a \in (0,1)$, we have:\\
(1) $S_a$ is $H^+$ rigid iff $a\ge \sqrt{3} / 2$.\\
(2) $\forall a \in (0,1)$, $S_a$ is not $H^-$ rigid.
\end{theorem}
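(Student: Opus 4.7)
The plan is to prove the three claims separately: the rigidity of $H^+$ when $a\ge\sqrt{3}/2$, the non-rigidity of $H^+$ when $a<\sqrt{3}/2$, and the non-rigidity of $H^-$ for every $a\in(0,1)$. The rigidity side will use the tangency principle combined with the trap-slice lemma of \cite{ChenMaWang}; the two non-rigidity sides will use explicit constructions via Delaunay surfaces and the round-corner gluing lemma.

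For the rigidity direction in (1), I would proceed by moving-barrier comparison. Given any $H^+$ perturbation $\Sigma_a$, the plan is to slide a one-parameter family of barriers (either translates of $S^2$ or pieces of CMC-$1$ nodoids with axis chosen so that they bulge into the zone) toward $\Sigma_a$ and locate a first point of interior tangency inside $S_a$. At such a point, the tangency principle yields local coincidence between $\Sigma_a$ and the barrier; the trap-slice lemma ensures that the barrier family cannot escape before first contact because the zone is sandwiched between two slices which pin the perturbation from above and below. The width condition $a\ge\sqrt{3}/2$, equivalently half-width $\le\pi/6$, is precisely the regime in which such a trap-slice configuration is available: the zone is narrow enough that the moving barrier of the required type is forced to realize the tangency inside $S_a$. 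Connectedness of $S_a$ and the strong maximum principle then propagate the local coincidence globally, giving $\Sigma_a=S_a$.

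For the two non-rigidity sides, I would construct explicit nontrivial perturbations using Delaunay surfaces of revolution about the $x_3$-axis. A direct calculation shows that matching a Delaunay profile $C^1$ at a boundary circle forces the flux $c=r(1+r'^2)^{-1/2}-r^2$ of the Delaunay piece to equal $0$, which recovers the sphere itself. Hence the construction must place the nontrivial Delaunay piece strictly in the interior of the zone, retaining a spherical collar near each boundary circle and gluing the two pieces along a smaller circle in $S_a$. The join is made $C^2$ by the round-corner lemma, whose key feature is quantitative control of the mean curvature in the rounded region: rounding a convex corner (from the inward-normal side) yields $H\ge 1$, while rounding a concave corner yields $H\le 1$. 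For (1) non-rigidity, I would fit a nodoid piece of mean curvature $1$ whose profile bulges inward from the sphere inside $S_a$, so the corner to be rounded is convex; such a placement exists precisely when $a<\sqrt{3}/2$. For (2), I would use an outward bulge of a Delaunay piece with mean curvature $\le 1$ whose corner is concave; this construction works for every $a\in(0,1)$ because the outward direction has no corresponding width obstruction from the polar caps.

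The principal obstacle is the sharp identification of $\sqrt{3}/2$ as the critical width. On the rigidity side this requires verifying that the trap-slice configuration fails exactly at this value, which reduces to a detailed geometric analysis of which moving CMC-$1$ barriers remain trapped when $\Sigma_a$ deviates from $S_a$. On the construction side it requires showing that a CMC-$1$ nodoid piece with the correct corner orientation fits inside $S_a$ when $a<\sqrt{3}/2$ and not when $a\ge\sqrt{3}/2$; I expect this threshold to come out of the explicit flux parameter of the Delaunay family once the spherical collar width is optimized. The round-corner lemma supplies the $C^2$ gluing and the one-sided mean-curvature inequality, so the remaining work is geometric placement; that is where the threshold is read off.
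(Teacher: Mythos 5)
Your overall architecture matches the paper's: tangency principle plus the trap-slice lemma for the rigidity half, and Delaunay pieces glued by the round-corner lemma for both non-rigidity halves (your $H^-$ construction, an outward bulge with a concave corner rounded so that $H\le 1$, is essentially the paper's proof of Theorem~\ref{dePrigi}, which simply reflects the spherical arc across the plane $x_1=a'$). However, the proposal has a genuine gap precisely where the content of the theorem lies: the constant $\sqrt3/2$ is never derived on either side. For the non-rigidity of $H^+$ you say the threshold should ``come out of the explicit flux parameter once the spherical collar width is optimized,'' but this is the step that must actually be done: in the paper (Theorem~\ref{-Prigi}) one glues the unit circle to an \emph{unduloid} $u_t$ with neck $(0,t)$, and the construction exists iff the unduloid reaches height $x_1=a$ before the boundary height, which after letting $t\to0^+$ reduces to $1-\sqrt{1-a^2}<\sqrt{1-a^2}$, i.e.\ exactly $a<\sqrt3/2$. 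Your choice of a \emph{nodoid} piece is also problematic: the inward dip that stays embedded inside $D^3$ and meets the sphere at a circle strictly inside the zone is naturally an unduloid piece, and it is the unduloid family's degeneration (chain of unit-diameter spheres) that produces the sharp constant.

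On the rigidity side the sketch ``slide CMC-$1$ barriers to a first interior tangency; the trap-slice lemma pins the perturbation'' asserts rather than proves the two facts that make the argument work at $a=\sqrt3/2$. First, one must show the perturbed surface actually enters the region swept by the barrier family: the paper needs Corollary~\ref{tangawa}, a pair of polar spherical-cap traps (Step~1 of Theorem~\ref{Prigi}) to exclude $\Sigma_a$ from two lens regions, and a second cylinder-and-arc trap (Step~2) to force a component of $\Sigma_a\cap D^3$ into the core $\{x_1^2+x_2^2<1/4\}$ where the unduloid necks live. Second, one must show the first contact with the sliding unduloids $U_t$ is at an \emph{interior} point, not at the fixed boundary $\partial S_a$; this is exactly Lemma~\ref{Tundu} ($u_t(1/2)<\sqrt3/2$ for all $t\in(0,1/2)$), together with Proposition~\ref{partial} and a separate treatment of the degenerate case $t^{**}=0$. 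None of these appear in your plan, and your statement that $a\ge\sqrt3/2$ is ``precisely the regime in which such a trap-slice configuration is available'' is the conclusion to be established, not an argument for it. Until the unduloid barrier estimate and the limiting computation $1-\sqrt{1-a^2}$ versus $\sqrt{1-a^2}$ are supplied, the ``iff'' in part (1) is not proved.
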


\begin{theorem}\label{thm-rigi-local}
There exists a constant $a_0 \approx 0.5524$ such that for $a \in (0,1)$:\\
(1) $S_a$ has local $H^+$ rigidity iff $a \ge a_0$.\\
(2) $S_a$ has local $H^-$ rigidity iff $a> a_0$.
\end{theorem}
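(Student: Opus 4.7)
The plan is to extract the critical value $a_0$ from the axisymmetric Jacobi equation on $S^2$ and then translate that infinitesimal criterion into genuine perturbations via Delaunay CMC-$1$ surfaces of revolution. Parametrise $S_a$ by latitude $\lambda \in (-w, w)$ with $w = \arccos a$. The Jacobi operator of $S^2$ as a CMC-$1$ hypersurface of $\R^3$ is $L = \Delta_{S^2} + |A|^2 = \Delta_{S^2} + 2$, and for a rotationally symmetric, equator-symmetric $u(\lambda)$ the equation $Lu = 0$ becomes
\begin{equation*}
u'' - \tan\lambda \cdot u' + 2u = 0.
\end{equation*}
The odd solution $\sin\lambda$ corresponds to infinitesimal vertical translation; reduction of order gives the even solution $u_2(\lambda) = \sin\lambda \cdot \operatorname{arctanh}(\sin\lambda) - 1$. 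Set $a_0 = \cos w_0$ where $w_0 \in (0,\pi/2)$ is the first positive zero of $u_2$; equivalently $\sqrt{1-a_0^2}\cdot\operatorname{arctanh}(\sqrt{1-a_0^2}) = 1$, numerically $a_0 \approx 0.5524$. Then $u_2$ has no zero in $[-w,w]$ exactly when $a > a_0$.

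For non-rigidity when $a < a_0$, I use Delaunay unduloids (for $H^+$) and nodoids (for $H^-$), both with $H \equiv 1$ and both containing surfaces that stay $C^\infty$-close to $S^2$ over any fixed latitude range. Truncating the profile of such a Delaunay surface at the two latitude circles $\lambda = \pm w$ leaves $C^0$ corners with the sphere profile; the inequality $a < a_0$ forces the corner angle to have the sign that lets one apply the round-corner lemma, smoothing the corner inside a thin annular strip while preserving $H \ge 1$ (resp.\ $H \le 1$). As the Delaunay parameter is pushed toward the sphere value, the perturbations fit inside any preassigned $\Theta \supset \overline{S_a}$, so local rigidity fails; for the $H^-$ case this construction moreover persists at $a = a_0$ via the nodoid side of the family.

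For rigidity when $a \ge a_0$ ($H^+$) or $a > a_0$ ($H^-$), shrink $\Theta$ so that a one-parameter family of Delaunay surfaces with $H=1$ foliates it. Given any perturbation $\Sigma$ with $H \ge 1$ (resp.\ $\le 1$), push these barriers until the first point of contact with $\Sigma$; the tangency principle forces $\Sigma$ to coincide with the barrier near that point, and the trap-slice lemma of \cite{ChenMaWang} propagates the coincidence to all of $S_a$, concluding $\Sigma = S^2$. The strict sign of $u_2$ on $[-w,w]$ is exactly what supplies the required barrier foliation, which is why the same threshold $w_0$ controls both halves of the theorem.

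The hardest point is the critical case $a = a_0$ and the $H^+/H^-$ asymmetry there: at criticality $u_2$ is a Dirichlet Jacobi field, so the first-order test is inconclusive, and the dichotomy has to be read off the second variation of $H$ along the Delaunay family that integrates $u_2$. I expect this sign to point toward $H < 1$, which destroys $H^-$ rigidity at $a_0$ but keeps $H^+$ rigidity intact. Combined with the quantitative estimates needed to execute the round-corner smoothing and to verify that the resulting surfaces are genuine smooth embeddings, this is where the bulk of the technical work sits.
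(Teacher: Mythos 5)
Your identification of the threshold is correct and equivalent to the paper's: since $\operatorname{arctanh}\sqrt{1-a^2}=\ln\frac{1+\sqrt{1-a^2}}{a}$, your condition $\sqrt{1-a_0^2}\,\operatorname{arctanh}\sqrt{1-a_0^2}=1$ is exactly the paper's equation $g(a_0)=0$, and reading it off the even rotationally symmetric Jacobi field $u_2(\lambda)=\sin\lambda\,\operatorname{arctanh}(\sin\lambda)-1$ (which is precisely the linearization at $t=1$ of the $H=1$ Delaunay family $c(1,t)$) is a clean conceptual substitute for the paper's elliptic-integral computation of $\partial_t x_a(t)|_{t=1}$. The overall strategy — Delaunay comparison surfaces with the tangency principle and trap-slice lemma for rigidity, and unduloid/nodoid-to-sphere gluing via the round-corner lemma for non-rigidity — is also the paper's.

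However, two genuine gaps remain. First, the critical case $a=a_0$ is exactly what produces the asymmetry between the closed condition in (1) and the open condition in (2), and you leave it at ``I expect this sign to point toward $H<1$'': the first-order (Jacobi field) test is indeed inconclusive there, and the paper settles it by computing the second derivative, $\partial_t^2\bigl(x_a(t)-\sqrt{1-a^2}\bigr)\big|_{t=1}=h(a)$ with $h(a_0)<0$, which both supplies the $H\le 1$ gluing at $a_0$ (failure of local $H^-$ rigidity) and dictates the harder $H^+$ rigidity argument at $a_0$; without this computation the borderline case of both parts is unproven. Second, your rigidity step ``foliate $\Theta$ by $H=1$ Delaunay surfaces and push to first contact'' has a boundary-contact gap: the tangency principle requires an interior tangency, but the CMC-$1$ Delaunay profiles near the sphere do not pass through the circles $\partial S_a$ (that is precisely what the sign of $u_2(w)$ says), so the first touching point of a pushed barrier may lie on $\partial\Sigma\subset\overline{S_a}$ or on the barrier's own boundary, where no contradiction follows. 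The paper avoids this by using the family $\widetilde{C}(a',t)$ of Delaunay pieces all spanning the \emph{fixed} boundary circles $\partial S_{a'}$ of a slightly wider zone, whose constant mean curvature $H^{a'}(t)$ lies strictly above (resp.\ below) $1$ exactly when $a'>a_0$ — this is the real use of the threshold, and it needs the implicit-function construction of $H^a(t)$ together with the monotonicity of $x^*$ in $H$ — and, at the critical value $a_0$ for $H^+$, by a multi-step trap construction (cylindrical slices, the regions $\Theta_1,\Theta_2$, and an analogue of Proposition~\ref{partial}) to exclude boundary tangencies. Note also that the trap-slice lemma is a nonexistence statement for surfaces inside a trap, not a propagation device; the propagation of coincidence is the open-and-closed argument via the tangency principle. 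These omitted steps are where most of the actual work of the theorem lies.
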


This paper is organized as follows. In Section~2, we review the trap-slice lemma in \cite{ChenMaWang} and the Tangency Principle (see also \cite{Fontenele} and \cite{Souam}). Together with suitably chosen trap and comparison surface we establish the strong $H^+$ rigidity in \ref{thm-rigi}. Then in Section~3 we establish local $H^+$($H^-$) rigidity by detailed analysis of the related ODE. The round-corner lemma is established in Section~4, which is applied to a gluing construction using Delaunay surfaces to find non-trivial deformations increasing or decreasing the mean curvature, hence establish the \emph{only if} part of the above two theorems. This finishes the proof to the main theorems. Some technical details involving elliptical integrals are left to the appendix.

\par
\noindent
\textbf{Acknowledgement.} \par
We would like to thank Yichen Cheng, who participated in our seminar and suggested the construction of trap (comparison surfaces) using two spheres in Step 3 of the proof of Theorem~\ref{Prigi}. We also thank other participants of our seminar: Zheng Yang, Yi Sha, Tianming Zhu, Shunkai Zhang, for their interests and many helpful discussions. This research project is partially supported by NSFC grant 11831005 and the Undergraduate Student Research Study program of Peking University for the year 2021-2022.

\section{The trap-slice lemma and the strong rigidity}

The Tangency Principle \cite{Fontenele, Souam} is an important instrument for mean curvature rigidity problems.

\begin{theorem}\label{thm-TP1}[The Tangency Principle]
Let $M^n_1$ and $M^n_2$ be hypersurfaces of $N^{n+1}$ that are tangent at $p$ and let $\eta_0$ be a unit normal vector of $M_1$ at $p$. Denote by $H_r^i(x)$ the $r$-mean curvature at $x\in W$ of $M_i, i=1,2,$ respectively. Suppose that with respect to this given $\eta_0$, we have:
\begin{enumerate}
 \item Locally $M_1\ge M_2$, i.e., $M_1$ remains above $M_2$ in a neighborhood of $p$;
 \item $H_r^2(x)\ge H_r^1(x)$ in a neighborhood of zero for some $r, 1 \le r \le n$; if $r\ge 2$, assume also that $M_2$ is $r$-convex at $p$.
\end{enumerate}
Then $M_1$ and $M_2$ coincide in a neighborhood of $p$.
\end{theorem}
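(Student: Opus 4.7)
The plan is to reduce the statement to Hopf's strong maximum principle for a linear uniformly elliptic second-order operator. Since $M_1$ and $M_2$ are tangent at $p$, I would first choose local coordinates $(y_1,\dots,y_n,z)$ centered at $p$ such that the common tangent space is $\{z=0\}$ and $\eta_0 = \partial_z$. In a sufficiently small neighborhood, both hypersurfaces can then be written as graphs $z = u_i(y)$ with $u_i(0)=0$ and $\nabla u_i(0)=0$. The geometric assumption ``$M_1\ge M_2$ locally'' translates to $u_1\ge u_2$ near the origin, and tangency gives $w := u_1 - u_2 \ge 0$ with $w(0)=0$.

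The $r$-mean curvature of such a graph has the form $H_r[u] = F_r(y,\nabla u,\nabla^2 u)$, where $F_r$ is smooth and symmetric in the principal curvatures; it is quasilinear for $r=1$ and fully nonlinear for $r\ge 2$. The crucial analytic input is ellipticity at $u_2$: for $r=1$ the mean-curvature operator is always elliptic, while for $r\ge 2$ ellipticity of $F_r$ at $u_2$ is equivalent to positive definiteness of the Newton tensor $T_{r-1}$ of the shape operator of $M_2$ at $p$, and this is precisely what the $r$-convexity assumption on $M_2$ guarantees. Setting $u_t := (1-t)u_2 + tu_1$ and applying the fundamental theorem of calculus to $F_r$ along this segment yields
\begin{equation*}
H_r[u_1] - H_r[u_2] \;=\; a^{ij}(y)\,\partial_i\partial_j w \,+\, b^i(y)\,\partial_i w \,+\, c(y)\,w,
\end{equation*}
where the coefficients are averages of partial derivatives of $F_r$ along the segment. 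Continuity of the linearization ensures that the principal symbol $(a^{ij})$ is positive definite on a whole neighborhood of the origin.

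The hypothesis $H_r^2 \ge H_r^1$ now reads $a^{ij}\partial_i\partial_j w + b^i\partial_i w + c\,w \le 0$, so $w$ is a nonnegative supersolution of a linear uniformly elliptic second-order equation, attaining its minimum value $0$ at the interior point $y=0$. Hopf's strong maximum principle then forces $w\equiv 0$ in a neighborhood of the origin; notably, no sign condition is required on the zeroth-order coefficient $c$, because the minimum value achieved equals $0$. Consequently $M_1$ and $M_2$ coincide in a neighborhood of $p$, as claimed.

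The main obstacle lies in the case $r\ge 2$. For $r=1$ ellipticity is automatic and the argument above is essentially the classical one due to Alexandrov. For $r\ge 2$, however, one must verify carefully that $r$-convexity of $M_2$ really does imply positivity of $T_{r-1}$ (hence ellipticity of $F_r$ at $u_2$) and, moreover, that this positivity is preserved for $u_1$ sufficiently $C^2$-close to $u_2$, so that the averaged operator above is genuinely uniformly elliptic on a fixed neighborhood. This algebraic content, rooted in identities for elementary symmetric functions of principal curvatures and their Newton tensors, is precisely what the references of Fontenele and Souam establish; any self-contained proof would reduce to reproducing those identities.
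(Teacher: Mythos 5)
The paper does not prove this statement at all: Theorem~\ref{thm-TP1} is imported verbatim from the cited works of Fontenele--Silva and Souam, so there is no internal proof to compare against. Your outline is, in substance, the standard proof from those references: write $M_1,M_2$ as graphs $u_1,u_2$ over the common tangent plane at $p$, linearize $H_r[u_1]-H_r[u_2]$ along the segment $u_t=(1-t)u_2+tu_1$ to obtain a linear second-order inequality for $w=u_1-u_2\ge 0$, and invoke the Hopf strong maximum principle at the interior zero of $w$ (the zeroth-order term is harmless because the minimum value is $0$; one may replace $c$ by $\min(c,0)$ using $w\ge 0$). One point in your sketch is justified incorrectly, though: for $r\ge 2$ the uniform ellipticity of the averaged coefficients $a^{ij}$ is \emph{not} obtained from $u_1$ being ``$C^2$-close'' to $u_2$ --- nothing in the hypotheses makes the two second fundamental forms close. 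The correct mechanism is algebraic: $r$-convexity of $M_2$ at $p$ places its principal-curvature vector in the G{\aa}rding cone $\Gamma_r$, where $F_r$ is elliptic; tangency together with $u_1\ge u_2$ gives $\nabla^2 u_1(0)\ge \nabla^2 u_2(0)$, and since $\Gamma_r$ is convex and monotone (adding a nonnegative matrix does not leave it), the entire segment of Hessians at the origin stays in the ellipticity cone, and continuity then yields ellipticity on a fixed neighborhood. With that substitution your argument is the accepted one; as you note, the remaining identities for Newton tensors and elementary symmetric functions are exactly what \cite{Fontenele} supplies, so a self-contained write-up would have to reproduce them rather than cite closeness.
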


\begin{corollary}\label{tangawa}
For $a \in (0,1)$, suppose $\Sigma_a$ is a perturbation of $S_a$, then:\\
(1)If $H(\Sigma_a) \ge 1$ and $\Sigma_a \neq id$, then $\Sigma_a \cap \mathring{D^3} \neq \varnothing$.\\
(2)If $H(\Sigma_a) \le 1$ and $\Sigma_a \neq id$, then $\Sigma_a \cap (D^3)^c \neq \varnothing$.
\end{corollary}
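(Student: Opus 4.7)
My plan is to prove each implication by contradiction. For part (1), assume $\Sigma_a\cap\mathring{D^3}=\varnothing$, so $\Sigma_a(M^n)\subset S^2\cup\{|x|>1\}$ and the function $f(x):=|\Sigma_a(x)|$ satisfies $f\ge 1$ everywhere with $f\equiv 1$ on $M^n\setminus S_a$. Introduce the contact set
\[
U=\{x\in M^n:f(x)=1\}=\{x\in M^n:\Sigma_a(x)\in S^2\},
\]
which is closed and contains $M^n\setminus S_a$; I will show $U$ is also open. Once that is done, connectedness of $M^n=S^2$ forces $U=M^n$, whence $\Sigma_a(M^n)\subset S^2$; since $\Sigma_a$ is an embedding of the closed surface $S^2$, the image must be all of $S^2$, contradicting $\Sigma_a\neq id$.

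To prove openness, fix $x_0\in U$. Because $f\ge 1$ globally and $f(x_0)=1$, the point $x_0$ is a global minimum; differentiating $|\Sigma_a|^2$ yields $\<\Sigma_a(x_0),d\Sigma_a(x_0)(v)\>=0$ for every $v\in T_{x_0}M^n$, so $p:=\Sigma_a(x_0)\in S^2$ is orthogonal to $T_p\Sigma_a$. Hence $\Sigma_a$ and $S^2$ share a tangent plane at $p$, with common unit normal $\pm p$. Since $\Sigma_a$ lies in $\{|x|\ge 1\}$, the bounded component of $\R^3\setminus\Sigma_a$ contains $D^3$, so the inward unit normal of $\Sigma_a$ at $p$ equals $\widetilde{\pmb{n}}(p)=-p$. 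Now apply Theorem~\ref{thm-TP1} with $M_1=S^2$, $M_2=\Sigma_a$ and $\eta_0=-p$: hypothesis~(1) holds because $\Sigma_a$ stays on the outward side of $S^2$ near $p$ (i.e.\ below $S^2$ in the $\eta_0$-direction), and hypothesis~(2) with $r=1$ reduces to $H(\Sigma_a)\ge 1=H(S^2)$, which is precisely the standing assumption. The tangency principle then gives $\Sigma_a\equiv S^2$ in a neighborhood of $p$, so $U$ is open at $x_0$.

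Part (2) follows by the dual argument. Assuming $\Sigma_a\cap(D^3)^c=\varnothing$ makes $f\le 1$ with $f\equiv 1$ outside $S_a$, so each $x_0\in U$ is a global \emph{maximum} of $f$. The same first-variation identity again yields tangency of $\Sigma_a$ and $S^2$ at $p=\Sigma_a(x_0)$, and the bounded-component argument (this time with bounded component contained in $D^3$) identifies the inward normal of $\Sigma_a$ again with $-p$. Theorem~\ref{thm-TP1} is now invoked with the roles reversed: $M_1=\Sigma_a$ above and $M_2=S^2$ below in the $\eta_0=-p$ direction (since $\Sigma_a$ lies inside $D^3$), and hypothesis~(2) becomes $H(S^2)=1\ge H(\Sigma_a)$. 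The rest of the argument—openness of $U$, connectedness of $S^2$, and contradiction with $\Sigma_a\neq id$—is identical. The only place I expect to need care is the orientation bookkeeping: verifying that the inward normal of the perturbed surface at an interior contact point really equals $-p$, and that the interchange of $M_1$ and $M_2$ between the two parts keeps the curvature inequality $H^2\ge H^1$ pointing in the direction Theorem~\ref{thm-TP1} demands.
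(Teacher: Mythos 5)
Your argument is essentially the paper's own proof: the paper likewise considers the contact set $S_T=\{x\in S^2:\Sigma(x)\in S^2\}$, notes it is closed and nonempty, uses the Tangency Principle at contact points (where one-sidedness of $\Sigma_a$ relative to $S^2$ forces tangency) to show it is open, and concludes by connectedness that $\Sigma_a=id$, contradicting the hypothesis. Your additional orientation bookkeeping is in the right spirit, but the cleanest justification that the chosen normal at a contact point $p$ is $-p$ is continuity of the normal field (on the unperturbed caps it points toward the side containing the open unit ball, hence it points toward that same complementary component everywhere, and at $p$ that local side is the $-p$ side), rather than the unproved assertion that the bounded component of $\R^3\setminus\Sigma_a$ contains $D^3$.
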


\begin{proof}
We only prove (1), and the proof of (2) is similar.

Consider the collection $$S_T = \{ x \in S^2 : \Sigma(x) \in S^2\}$$
It is apparent that $S_T$ is closed in $S^2$. If $\Sigma_a \cap \mathring{D^3} = \varnothing$, then for all $x \in S_T$, $\Sigma$ will be tangent with $S^2$ at $x$. Hence, from Tangency Principle, $\Sigma_a$ will coincide with $S^2$ in a neighborhood of $x$, showing that $S_T$ is also open in $S^2$. Therefore, we have $\Sigma_a = id$, which is a contradiction.
\end{proof}

The trap-slice lemma is an encapsulated version of the Tangency Principle, which was first established in our previous work \cite{ChenMaWang}.

\begin{theorem}\label{trap-slice-lemma}[The trap-slice lemma]

Let the \emph{trap} $\Omega \subset \R^n$ be a domain enclosed by two connected hypersurfaces $B_0,B_1$ sharing a boundary $A=B_0\cap B_1$ and $\partial\Omega=B_0\cup B_1$.

The \emph{slice} is a foliation of $\Omega$ by a one-parameter family of hypersurfaces $\{F_{t}\}\subset \Omega$ (with or without boundaries). When $\partial F_t\ne \emptyset$, we assume $\partial F_t \subset B_1$. Each $F_{t}$ divides $\Omega$ into two sub-domains, one having $B_0$ on its boundary, and $\Omega_t$ is the other one away from $B_0$.

Fix a real constant $\alpha\in\mathbb{R}$.
With respect to the outward normal of $\partial\Omega_t\supset F_t$, suppose that the mean curvature function of $F_{t}$ always satisfies $H(F_{t})\ge \alpha$.

Given the trap and the slice as above, there does NOT exist any hypersurface $\Sigma_*$ with boundary $\partial\Sigma_*$ satisfying all of the following conditions:
\begin{enumerate}
  \item $\Sigma_*$, the interior of the compact hypersurface $\overline\Sigma_*=\Sigma_*\cup \partial\Sigma_*$, is embedded in $\Omega$ with boundary $\partial\Sigma_*\subset B_0\subset \partial\Omega$. In particular, $\Sigma_*$ divides $\Omega$ into two sub-domains; sub-domain $\Omega_*$ is the one of them that having $B_1$ on its boundary. We orient $\Sigma_*$ by the outward normal of $\partial\Omega_*$.
  \item The boundary $\partial\Sigma_*$ has a neighborhood $U_t$ in $\overline\Sigma_*$ not contained in $\Omega_t$ for any $t$.
  \item Given the orientation of $\Sigma_*$, the mean curvature function $H(\Sigma_*)\le \alpha$.\\
\end{enumerate}
\end{theorem}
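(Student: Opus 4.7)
The plan is to argue by contradiction using the sweeping method combined with the Tangency Principle (Theorem~\ref{thm-TP1}). Suppose, for contradiction, that such a hypersurface $\Sigma_*$ exists. I would first fix a parametrization of the foliation so that the parameter $t$ grows as $F_t$ moves from $B_0$ toward $B_1$; this produces a continuous height function $x \mapsto t(x)$ on $\Omega$ defined by $x \in F_{t(x)}$. Restricting to the compact set $\overline{\Sigma}_*$, set
\[
t_* = \sup\{\,t(x) : x \in \Sigma_*\,\}.
\]
By compactness and continuity the supremum is attained at some $p \in \overline{\Sigma}_*$, and locally near $p$ the surface $\Sigma_*$ lies entirely on the $B_0$-side of $F_{t_*}$, touching it only at $p$.

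The critical step is to verify that $p$ is an interior tangency of both surfaces. Since the interior of $\Sigma_*$ is embedded in $\Omega$, the point $p$ cannot sit on $\partial F_{t_*}\subset B_1$. Condition (2) is exactly what rules out $p\in\partial \Sigma_*\subset B_0$: the neighborhood $U_{t_*}$ of $\partial\Sigma_*$ in $\overline{\Sigma}_*$ is kept out of $\Omega_{t_*}$, so $\Sigma_*$ remains strictly on the $B_0$-side of $F_{t_*}$ near its boundary, hence cannot attain $t(x)=t_*$ there. Thus $p$ is a genuine interior tangency. The outward normals of $\partial\Omega_{t_*}$ and $\partial\Omega_*$ both point toward $B_0$, so they coincide at $p$ as a single $\eta_0$. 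With respect to $\eta_0$ one has the local position $\Sigma_* \ge F_{t_*}$ and the mean-curvature inequality $H(F_{t_*}) \ge \alpha \ge H(\Sigma_*)$, so Theorem~\ref{thm-TP1} (with $r=1$, no convexity needed) forces $\Sigma_*$ and $F_{t_*}$ to coincide in a neighborhood of $p$.

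To finish I would promote this local equality to a global one by an open-closed argument. The set $E = \Sigma_*\cap F_{t_*} = \{x\in\Sigma_*:t(x)=t_*\}$ is closed in $\Sigma_*$ by continuity of $t(\cdot)$; at every $x\in E$ the attainment of the global sup $t_*$ reproduces the same one-sided tangency configuration, so the Tangency Principle applied pointwise makes $E$ open in $\Sigma_*$ as well. Assuming $\Sigma_*$ is connected (otherwise one argues component by component), this gives $\Sigma_*\subset F_{t_*}$, which contradicts the hypotheses: $\partial \Sigma_*\subset B_0$ while $\partial F_{t_*}\subset B_1$, so a single leaf cannot accumulate on $B_0$, and by condition~(2) the boundary strip $U_{t_*}$ must remain strictly off of $F_{t_*}$ on the $B_0$-side. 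The main obstacle in carrying out this outline is not the tangency step itself but pinning down the interpretation of condition~(2) and the compatibility of normal orientations on both surfaces; once the sweep is set up so that condition~(2) really does forbid a boundary tangency at $A=B_0\cap B_1$, the rest of the argument is automatic.
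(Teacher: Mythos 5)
Your overall strategy is exactly the intended one: sweep the foliation to the last leaf that meets $\Sigma_*$, produce an interior tangency point, apply the Tangency Principle with the comparison $H(F_{t_*})\ge\alpha\ge H(\Sigma_*)$, and propagate by an open--closed argument (the paper itself gives no proof here but defers to \cite{ChenMaWang}, describing the lemma as an encapsulated Tangency Principle). The orientation matching is also right in spirit, though to justify that the two outward normals agree at the contact point you should note that $\Omega_{t_*}$ is connected, disjoint from $\Sigma_*$, and has $B_1$ on its boundary, hence $\Omega_{t_*}\subset\Omega_*$.

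The genuine gap is in how you use condition (2): in both places you invoke it at the extremal parameter $t_*$, where it is vacuous. By the very definition of $t_*=\sup_{\Sigma_*}t(\cdot)$, no point of $\Sigma_*$ lies in $\Omega_{t_*}$, so saying that a collar of $\partial\Sigma_*$ is kept out of $\Omega_{t_*}$ adds nothing; in particular it does not force near-boundary points to stay strictly off $F_{t_*}$ (lying on the leaf is compatible with avoiding the open region $\Omega_{t_*}$), and, more seriously, it does not exclude the scenario in which $t(x)<t_*$ for every interior point and the supremum is only approached along a sequence tending to $\partial\Sigma_*$. Note also that $t(\cdot)$ is defined only on $\Omega$, not on $\partial\Sigma_*\subset B_0\subset\partial\Omega$, so "the sup is attained on the compact set $\overline\Sigma_*$" is not available as stated. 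The correct deployment is at parameters $t<t_*$ close to $t_*$: condition (2) then yields a collar $U_t$ of $\partial\Sigma_*$ in $\overline\Sigma_*$ on which $t(\cdot)\le t<t_*$, so the supremum is attained on the compact set $\overline\Sigma_*\setminus U_t\subset\Sigma_*$, i.e., at an interior point; this is the missing step that makes the tangency interior. The same remark repairs your final contradiction: after concluding $\Sigma_*\subset F_{t_*}$, the claim that "a single leaf cannot accumulate on $B_0$" is not sufficient, because $\partial F_{t_*}\subset B_1$ may meet $A=B_0\cap B_1$, so $\partial\Sigma_*\subset A$ is not absurd by itself; instead observe that for $t<t_*$ one has $\Sigma_*\subset F_{t_*}\subset\Omega_t$, so every collar of $\partial\Sigma_*$ in $\overline\Sigma_*$ meets $\Omega_t$, contradicting condition (2) directly.
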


\begin{corollary}\label{cor-trap-slice}\cite{ChenMaWang}
Assumptions on the trap $\Omega\subset \R^n, \partial\Omega=B_0\cup B_1$ and the slice $\{F_t\}$
are as in the trap-slice lemma (Theorem~\ref{trap-slice-lemma}).
Moreover, we suppose that:
\begin{enumerate}
  \item $B_0$ is also one leave of the foliation $\{F_t\}$ (we may suppose $B_0=F_0$ is an open subset of $\partial\Omega$);
  \item For any other $t\ne 0$, either $\partial B_0 \cap \partial F_t=\emptyset$, or $B_0$ intersects with $F_t$ at their boundaries transversally.
\end{enumerate}
Then $B_0$ admits no non-trivial perturbation $\Sigma_0$ (with fixed boundary up to $C^2$ and the same orientation on $\partial\Sigma=\partial B_0$) such that $H(\Sigma_0)\le \alpha$, unless two hypersurfaces $\Sigma_0$ and $B_1$ intersect at their interior points.\\
\end{corollary}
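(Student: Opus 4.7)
The plan is to invoke the trap-slice lemma (Theorem~\ref{trap-slice-lemma}) with $\Sigma_*:=\Sigma_0$. Suppose for contradiction that $\Sigma_0$ is a non-trivial perturbation of $B_0$ satisfying $H(\Sigma_0)\le\alpha$ and $\Sigma_0\cap\mathrm{int}(B_1)=\emptyset$; verifying conditions (1)--(3) of the lemma for this choice of $\Sigma_*$ will produce a contradiction and thereby force $\Sigma_0=B_0$.

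The first and most delicate step is condition (1): $\Sigma_0\setminus\partial\Sigma_0\subset\Omega$. The perturbation shares its boundary $\partial\Sigma_0=\partial B_0\subset A$ with $B_0=F_0$, agrees with $B_0$ up to second order there, and inherits the outward-from-$\Omega$ orientation. Since the leaves $F_t$ immediately adjacent to $F_0$ enter $\overline\Omega$, the $C^2$-matching at $\partial B_0$ shows that a small collar of $\partial\Sigma_0$ in $\Sigma_0$ lies inside $\overline\Omega$; connectedness of $\Sigma_0$ combined with the hypothesis $\Sigma_0\cap\mathrm{int}(B_1)=\emptyset$ then traps the entire perturbation in $\overline\Omega$. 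Condition (3), $H(\Sigma_0)\le\alpha$, is the given hypothesis.

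Condition (2) requires that for every parameter $t$, some neighborhood $U_t$ of $\partial\Sigma_0$ in $\overline{\Sigma}_0$ fails to lie inside $\Omega_t$. For $t\ne 0$ with $\partial F_t\cap\partial B_0=\emptyset$, compactness yields a positive distance between these two boundaries, so a sufficiently small neighborhood of $\partial B_0$ on $\Sigma_0$ lies in the thin shell between $F_0=B_0$ and $F_t$, which is disjoint from $\Omega_t$ (the sub-domain on the far side of $F_t$ from $B_0$). For $t\ne 0$ where $B_0$ meets $F_t$ transversally at their boundaries, the nonzero crossing angle at $\partial B_0$ forces $\Sigma_0$ (being $C^2$-close to $B_0$ there) to occupy points outside $\Omega_t$ arbitrarily close to $\partial\Sigma_0$. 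In either case condition (2) is verified, and the trap-slice lemma delivers the required contradiction.

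The main obstacle I anticipate is the rigor of Step 1 — in particular, ruling out a perturbation that slips out of $\overline\Omega$ through $\mathrm{int}(B_0)$ while keeping $H\le\alpha$ under the extended orientation. The $C^2$ boundary matching together with the leaf structure of $\{F_t\}$ at $F_0=B_0$ is the main tool, but a fully rigorous treatment may benefit from a parallel trap-slice argument on a reflected or symmetric region across $B_0$ to exclude such outward excursions; once the trapping $\Sigma_0\subset\overline\Omega$ is secured, the remainder of the corollary is an essentially mechanical application of Theorem~\ref{trap-slice-lemma}.
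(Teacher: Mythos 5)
The paper itself does not reprove this corollary (it is quoted from \cite{ChenMaWang}), so I can only judge your deduction on its own terms, and there is a genuine gap at its first and crucial step. You claim that the $C^2$ contact along $\partial\Sigma_0=\partial B_0$, together with connectedness and $\Sigma_0\cap\mathrm{int}(B_1)=\emptyset$, traps all of $\Sigma_0$ in $\overline\Omega$, so that the lemma can be applied with $\Sigma_*=\Sigma_0$. Neither half of this is justified: matching $B_0$ to second order along the fixed boundary does not determine on which side of $B_0$ a collar of $\partial\Sigma_0$ lies (a perturbation vanishing to third order at $\partial B_0$ can bulge immediately to the non-$\Omega$ side), and even if a collar were inside, connectedness does not confine $\Sigma_0$ to $\overline\Omega$, because $\partial\Omega$ contains $B_0$ itself and nothing prevents the perturbed hypersurface from crossing the original $B_0$ through its interior and leaving the trap. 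So the hypothesis of Theorem~\ref{trap-slice-lemma} that $\Sigma_*$ is embedded in $\Omega$ is never verified for your choice of $\Sigma_*$; your closing remark acknowledges exactly this difficulty but does not repair it.

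The workable route is to apply the lemma not to $\Sigma_0$ but to a connected component $\Sigma_*$ of $\Sigma_0\cap\Omega$, whose boundary lies in $B_0$ precisely because $\mathring{B}_1$ is avoided. Then condition 2 of the lemma must be checked not only along the fixed boundary $\partial B_0$ --- which is where the corollary's extra hypotheses (1)--(2) enter, roughly as in your discussion of the disjoint and transversal cases --- but also along the new boundary points where $\Sigma_0$ crosses the interior of $B_0$, and one must confirm that the orientation of $\Sigma_*$ as outward normal of $\Omega_*$ agrees with the orientation under which $H(\Sigma_0)\le\alpha$ is assumed. Finally, the case $\Sigma_0\cap\Omega=\emptyset$, which your Step 1 was meant to exclude, has to be handled separately (for instance by a boundary version of the tangency principle comparing $\Sigma_0$ with the leaf $F_0=B_0$ along their common boundary). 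As written, your proposal leaves both this case and the possibility of escape through $\mathring{B}_0$ open, so the contradiction is not established.
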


\begin{remark}\label{remark-rconvex}
The trap-slice lemma and Corollary~\ref{cor-trap-slice} above are still true when the assumptions are changed as below:
$\Sigma_*$ and $F_t$ are oriented by the inward normal vectors with respect to $\Omega_*$ and $\Omega_t$, respectively,
and the inequality on $H$ is reversed as
\[H(F_t)\le \alpha\le H(\Sigma_*).\]
\end{remark}

Now we consider the $H^+$ rigidity of $S_a$:

\begin{theorem}\label{Prigi}
Suppose $a \in [\sqrt{3} / 2 , 1)$, then $S_a$ has $H^+$ rigidity.
\end{theorem}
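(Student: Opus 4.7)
Plan.

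First, by the monotonicity in Remark~\ref{monotonicity}, every perturbation of $S_a$ for $a\ge\sqrt{3}/2$ is also a perturbation of $S_{\sqrt{3}/2}$ (since its support lies in $S_a\subset S_{\sqrt{3}/2}$), so it suffices to establish $H^+$ rigidity of the critical zone $S_{\sqrt{3}/2}$. At this threshold the geometry is tight: the boundary circles $C_\pm$ are exactly $S^2\cap S^\pm$, where $S^\pm$ are the unit spheres centered at $(0,0,\pm 1)$, and moreover $S^+$ is tangent to $S^-$ at the origin. Let $D^\pm:=S^\pm\cap\overline{D^3}$ and $B_1:=D^+\cup D^-$. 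Then $B_1$ is a piecewise smooth surface with $\partial B_1=C_+\cup C_-$, pinched at the origin, which together with $B_0:=S_{\sqrt{3}/2}$ encloses a region $\Omega\subset\mathring{D^3}$. This $\Omega$ is the trap; each cap $D^\pm$ carries mean curvature $1$ with respect to its inward normal (toward the center of $S^\pm$).

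For the slice I would build an axially symmetric one-parameter family $\{F_t\}_{t\in[0,T]}$ of CMC-$1$ surfaces foliating $\Omega$, with $F_0=B_0$ and $\partial F_t\subset B_1$ for $t>0$, each oriented so that $H(F_t)=1$ points inward into $\Omega_t$. The natural candidates are pieces of unit spheres whose centers translate along the $x_3$-axis, truncated against $B_1$. With the trap and the slice in place, Corollary~\ref{cor-trap-slice} in the reversed form of Remark~\ref{remark-rconvex} with $\alpha=1$ applies.

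Now suppose $\Sigma$ is a non-trivial perturbation of $S_{\sqrt{3}/2}$ with $H(\Sigma)\ge 1$. The corollary then forces $\Sigma$ and $B_1$ to meet at an interior point $p$, where they are tangent with $\Sigma$ lying on the appropriate side of $B_1$. Applying the Tangency Principle (Theorem~\ref{thm-TP1}) at $p$, $\Sigma$ locally coincides with one of the unit spheres $S^+$ or $S^-$. A standard connectedness / unique-continuation argument on the elliptic inequality $H(\Sigma)\ge 1$ touching the CMC-$1$ model $S^\pm$ from one side then propagates the coincidence to all of $\Sigma$, forcing $\Sigma=S^+$ (or $S^-$) as sets. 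But $\Sigma=\mathrm{id}=S^2$ on $S^2\setminus S_{\sqrt{3}/2}$, and neither polar cap of $S^2$ lies on $S^\pm$—a contradiction.

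The principal difficulty is the explicit construction of the slice $\{F_t\}$: one must produce a one-parameter family of CMC-$1$ surfaces that actually foliate $\Omega$, verify the transversality of $\partial F_t$ to $\partial B_0=C_+\cup C_-$, and organize leaves consistently across the singular pinch of $B_1$ at the origin where $D^+$ meets $D^-$ tangentially. Careful orientation bookkeeping between $\widetilde{\pmb{n}}$, $\pmb{n}$ on $\Sigma$, and the inward normals of $\Omega_t$ and $\Omega_*$ is also needed to match the hypotheses of Remark~\ref{remark-rconvex}.
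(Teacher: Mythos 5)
Your reduction to $a=\sqrt{3}/2$ and your choice of trap (the region between the zone and the two interior caps $D^\pm$ of the unit spheres centered at $(0,0,\pm1)$, pinched at the origin) are reasonable, but the slice you propose does not satisfy the hypotheses of the trap-slice lemma, and that is where the whole argument lives. In the profile plane, the unit circle translated by $t>0$ along the $x_3$-axis meets the unit circle at $x_3=t/2\in(0,1/2)$ and meets the arc of the sphere centered at $(0,0,-1)$ at $x_3=(t-1)/2$; the portion of the translated circle lying in your $\Omega$ is exactly the arc between these two points. Hence every leaf of the translated-sphere family has one boundary circle on $B_0=S_{\sqrt{3}/2}$ itself rather than on $B_1=D^+\cup D^-$, violating the requirement $\partial F_t\subset B_1$ and breaking the first-contact argument (the touching could occur at a leaf boundary point sitting on $B_0$, where the Tangency Principle gives nothing). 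In fact no single family of CMC-$1$ surfaces of revolution interpolates from the zone down to the pinched pair of caps with boundaries only on $B_1$; this is precisely why the paper argues in three stages: it first excludes the two lens-shaped regions between $D^\pm$ and the polar caps of $S^2$ (there the slices are the vertical translates of the cap, and their boundaries do lie on the correct wall), then pushes a component of $\Sigma\cap D^3$ inside the cylinder $x_1^2+x_2^2<1/4$ using a cylinder/arc slice with mean curvature $<1$, and only then sweeps with the unduloids $U_t$ ($H\equiv1$), whose boundary radii at $x_3=\pm 1/2$ are controlled by Lemma~\ref{Tundu} so that the final contact is an interior tangency.

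There is a second gap even if a valid slice were available: Corollary~\ref{cor-trap-slice} only forces $\Sigma$ to meet $B_1$ at interior points; it does not give tangency there. Since strong rigidity places no ambient restriction on $\Sigma$, the perturbed surface can cross $D^\pm$ transversally into the lens regions (or leave $D^3$ entirely), so your "tangent at the interior contact point, then unique continuation forces $\Sigma=S^\pm$" step has nothing to bite on. The paper closes exactly this loophole with its Step 1 (proving $\Sigma\cap(\Omega\cup\tilde\Omega)=\varnothing$ for the two lenses) together with Corollary~\ref{tangawa}, and it also needs Proposition~\ref{partial} and Lemma~\ref{Tundu} to rule out contact along $\partial S_a$. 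As written, your plan identifies the right difficulty but the proposed foliation fails it, and the transversal-crossing case is unaddressed, so the proof does not go through.
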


\begin{proof}
From Remark~\ref{monotonicity}, we only need to consider $a = \sqrt{3} / 2$. Assuming there is a perturbation $\Sigma_a \neq id$ of $S_a$ such that $H(\Sigma_a) \ge 1$, we will try to find contradiction.\\

\noindent
Step 1: Denote $B_1 = S^2 \cap \{ x_3 \le -1/2 \}$, and $B_0$ the symmetrical surface of $B_1$ with respect to $x_3 = -1/2$. They enclose an open domain $\Omega \subset D^3$, which is our "trap".\\
Then we translate $B_0$ by the vector $\pmb{v}_t = (0,0,-t),0 \le t < 1$, denoted the translated surface as $B_t$. Denote $F_t = B_t \cap \Omega$, which is our \emph{slice}. The normal of $B_0$ and $F_t$ are all inward about $\Omega$.

We assert that $\Sigma_a \cap \Omega = \varnothing$, because if not, we can choose a connected component of $\Sigma_a \cap \Omega$ and denote it $\Sigma_*$. Might as well, assume there exist $p_1 \in \Sigma_*$ and $p_2 \in B_1$ such that the open line segment $p_1p_2 \cap \Sigma_a = \varnothing$(this is reasonable for $\Sigma_a$ is an embedded map of $S^2$). Then the normal on $\Sigma_*$ will suit the condition 1 in trap-slice lemma and Remark~\ref{remark-rconvex}. 

Also, it is apparent that the boundary $\partial \Sigma_* \subset B_0$ suits condition 2 in trap-slice lemma, since $\partial F_t \subset B_1$ and $\Sigma_a$ is an embedded map. Hence, we get the contradiction by Remark~\ref{remark-rconvex}.

 \begin{figure}[!h]
   \setlength{\belowcaptionskip}{0.1cm}
   \centering
   \includegraphics[width=0.56\textwidth]{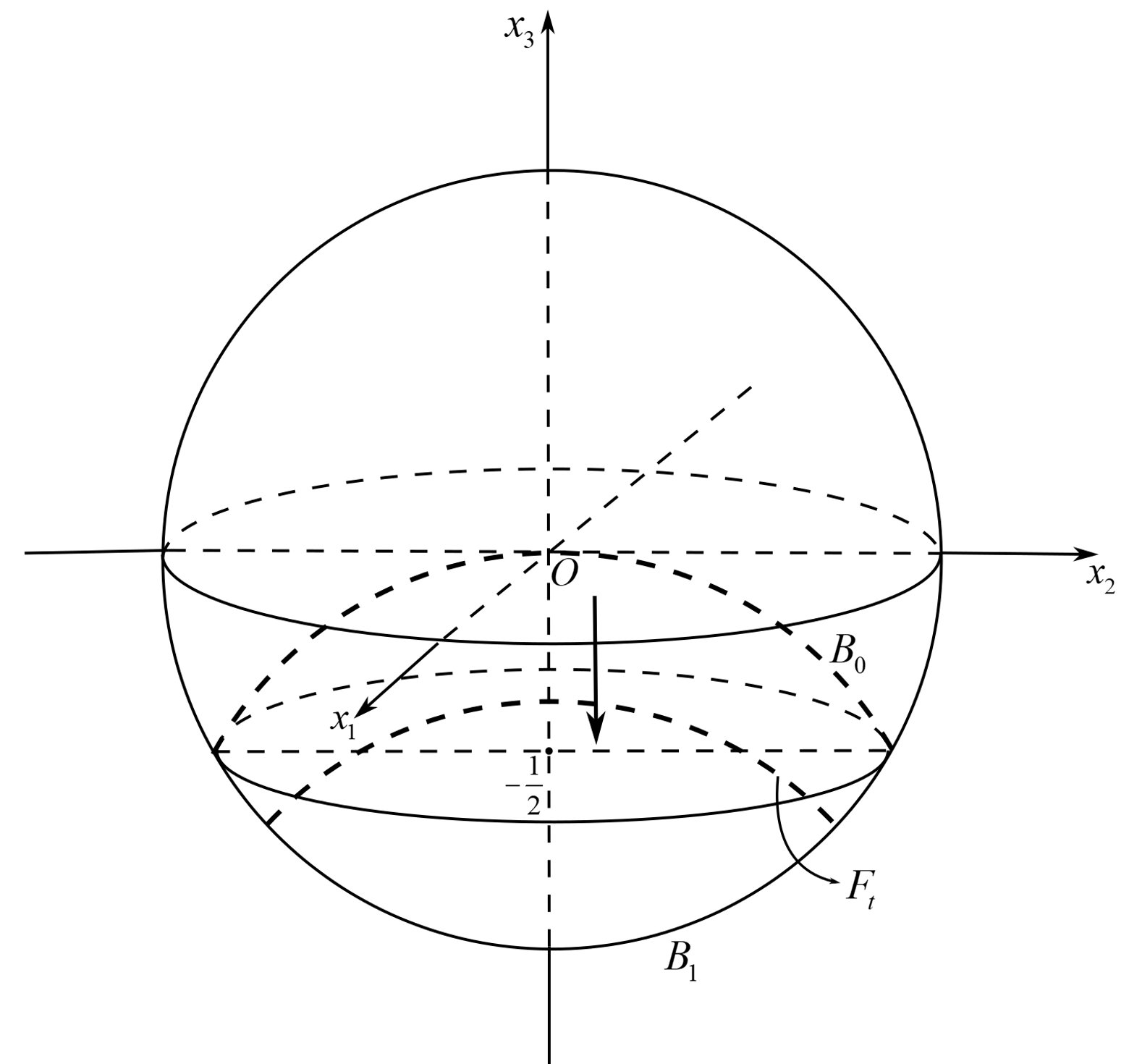}
   \label{fig1}
 \end{figure}

Similarly, denote $\tilde{B}_1 = S^2 \cap \{ x_3 \ge 1/2 \}$, and $\tilde{B}_0$ the symmetrical surface of $\tilde{B}_1$ with respect to $x_3 = 1/2$. They enclose an open domain $\tilde{\Omega} \subset D^3$, which is symmetrical with $\Omega$ with respect to $P_3$. We can also get $\Sigma_a \cap \tilde{\Omega} = \varnothing$.\\

\noindent
Step 2: Since we have had
\begin{align}\label{assert1}
\Sigma_a \cap (\Omega \cup \tilde{\Omega}) = \varnothing,
\end{align}
we will then further consider where $\Sigma_a$ is.

From Corollary~\ref{tangawa}, we know $\Sigma_a \cap \mathring{D^3} \neq \varnothing$. Hence, we can select a connected component of $\Sigma_a \cap D^3$, denoted as $\Sigma^*$.

We can prove that 
$$\Sigma^* \cap \{ x_1 ^2 + x_2 ^2 < \frac{1}{4} \} \neq \varnothing.$$
In $P_2$, define $\gamma$ as 
$$\gamma = \{ (x_3,x_1) \in P_2 : x_1 ^2 + x_3 ^ 2 - 2|x_3| = 0, |x_3| \le \frac{1}{2}, x_1 \ge 0 \}.$$
For $t \in [1/2 , \sqrt{3} / 2]$, define the line segment 
$$l_t = \{ x_1 = t : |x_3| \le 1 - \sqrt{1-t^2} \}.$$
And for $t \in (\sqrt{3} / 2 , 1)$, define $l_t$ the minor arc segment connecting $(-1 / 2,\sqrt{3}/ 2 )$, $(0,t)$ and $(1 / 2, \sqrt{3} / 2)$ in $P_2$. Then we rotate $l_t$ around $x_3$ axis to generate the slice $F'_t$.

 \begin{figure}[!h]
   \setlength{\belowcaptionskip}{0.1cm}
   \centering
   \includegraphics[width=0.56\textwidth]{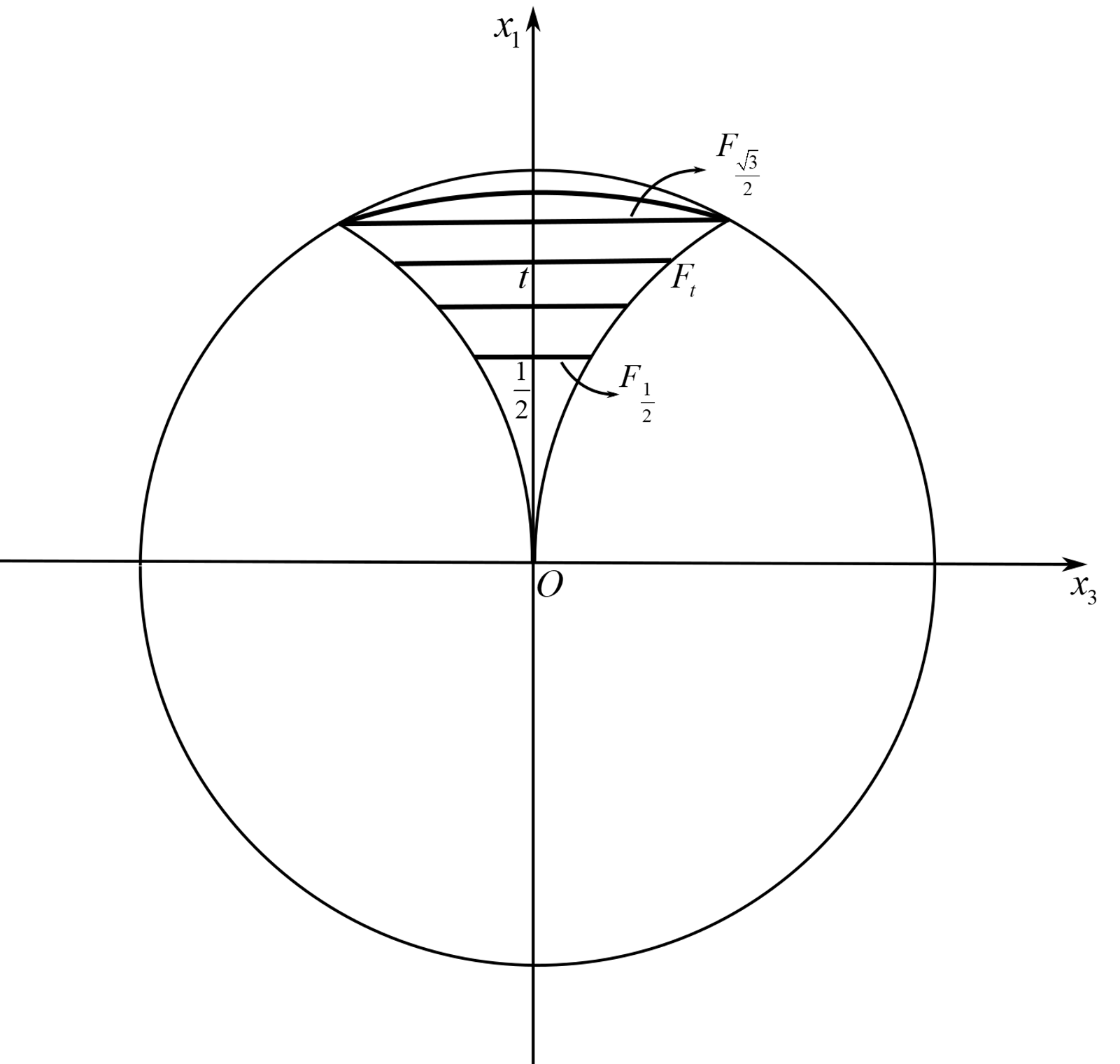}
   \label{fig2}
 \end{figure}

It can be easily verified that
 $$H(F'_t) = \frac{1}{2t} < 1, \frac{1}{2} \le t \le \frac{\sqrt{3}}{2}. $$
And when $\sqrt{3}/ 2 < t < 1$, consider the radius of $l_t$, denoted as $r_t$, with attention that $r_t > 1$. For $\forall x_3 \in [-1 / 2, 1/2]$, define $H^t(x_3)$ as the mean curvature of $F'_t$ at $(x_3, l_t(x_3))$, and it can be easily verified that 
$$H^t(x_3) \le H^t (\frac{1}{2}) = \frac{1}{2} (\frac{1}{r_t} + \frac{\sqrt{4r_t^2 - 1}}{\sqrt{3}r_t}) < 1.$$
Hence, for $F'_t$ as our second "slice", there is 
$$H(F_t') \le 1, \forall t \in (\frac{1}{2} , 1).$$
Define $\omega' \subset P_2$ the open domain surrounded by $\gamma$, $l_{\frac{1}{2}}$ and $l_1$. Rotate $\omega'$ around $x_3$ axis, generating a domain $\Omega' \subset \R^3$, which is our second "trap". Also, define $B_0' = F'_1$ and $B_1' = \partial \Omega' \backslash B_0'$.

If $\Sigma^* \subset \{ x_1 ^2 + x_2 ^2 \ge 1 / 4 \}$, we can consider $\Omega'$, $B_0'$, $B_1'$ and $F'_t(1/2 \le t \le 1)$, and from Remark~\ref{remark-rconvex}, we get the contradiction.\\

\noindent
Step 3: In $P_2$, we define a series of undulary as $u_t(0 < t< 1/2)$ in $|x_3| \le 1 /2$, whose neck is $(0,t)$. Consider a series of elliptic in $P_2$, defined as
\begin{align}\label{ellip}
E_t: \frac{x_3^2}{t - t^2} + (2x_1 - 1)^2 = 1.
\end{align}
It is apparent that the focal points of $E_t$ are $(0,t)$ and $(0,1-t)$, and define $u_t$ as the orbit of $(0,t)$ when $E_t$ rotates towards right along the $x_3$ axis.

Also, it is known that if we rotate $u_t$ around $x_3$ axis to generate a series of CMC surface, denoted as $U_t$, then $U_t$ are all CMC surface segments, satisfying $$H(U_t) =1,\forall t \in (0, \frac{1}{2}).$$
Then, we will prove the lemma below:

\begin{lemma}\label{Tundu}
for $\forall t \in (0, 1/2)$, define $x_U(t) > 0$ such that $x_U(t) =u_t(1 / 2)$. Then we always have $x_U(t) < \frac{\sqrt{3}}{2}$
\end{lemma}

\begin{proof}
Still consider the rotation of $E_t$ defined by~\ref{ellip}. As the figure below, when slope of the long axis of $E_t$ is $- \sqrt{3} / 2$, as the figure shows, denote the ellipse as $E'_t$, and then define $P$ as the tangent point of $E'_t$ with $x_3$ axis, $A_1 \in u_t$ as the focal point of $E'_t$ rotated from $(0,t)$, $A_2$ as the other focal point, $BC$ as the long axis of $E'_t$, $D \triangleq BC \cap x_3$ axis, and $E \in x_3$ axis such that $AE \perp x_3$ axis. \\
Firstly, consider $A_2'$ the symmetric point of $A_2$ about $x_3$ axid. So $A_1, P, A_2'$ are collinear, with $$|A_1 A_2'| = |A_1P| + |A_2P| = 1$$
Hence, we know $|A_1 D| < |A_1A_2'| = 1$ from $\angle A_1 D A_2' = 2\pi / 3 > \pi / 2$. So we have $$|A_1 E| < \frac{\sqrt{3}}{2} \quad |ED| < \frac{1}{2}$$
Then, since $|OE|$ is the length of the minor elliptic arc from $B$ to $P$, we have $|OE| > |BP|$, hence $$|OD| > |BP|+|PD| > |BD| > 1$$
Therefore, we know $|OE| = |OD| - |ED| > 1 / 2$, which tells us that $$u_t(\frac{1}{2}) < u_t(x_3(A_1))  = |A_1 E| < \frac{\sqrt{3}}{2}$$
 \begin{figure}[!h]
   \setlength{\belowcaptionskip}{0.1cm}
   \centering
   \includegraphics[width=0.62\textwidth]{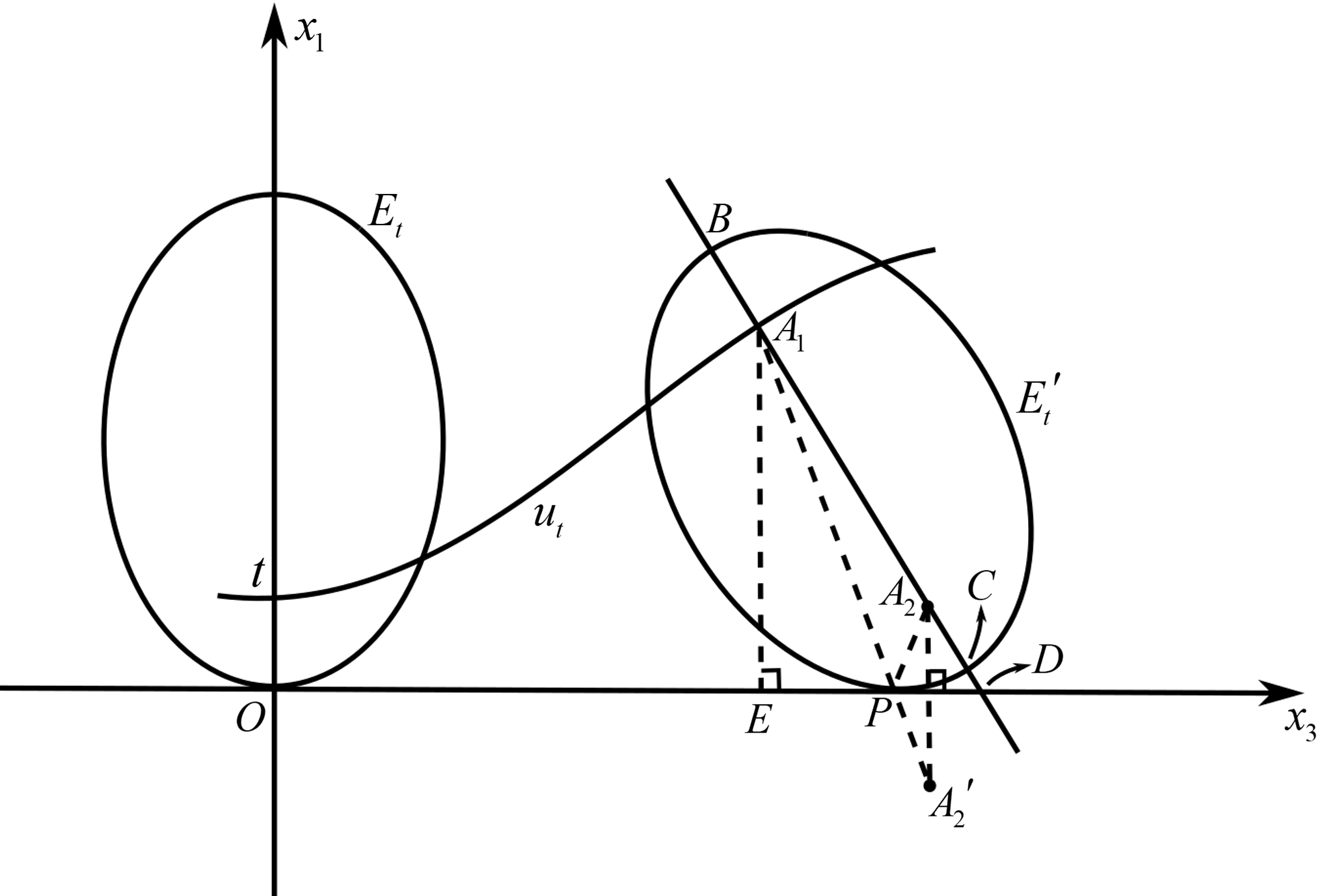}
   \label{fig3}
 \end{figure}

Thus we finish the proof of this lemma.
\end{proof}

Now select $\Sigma^{**}$ as one connected component of $\Sigma^* \cap \{ x_1 ^2 + x_2 ^2 \le 3 / 4 \}$ such that $$\Sigma^{**} \cap \{ x_1 ^2 + x_2 ^2 < \frac{1}{4} \} \neq \varnothing$$
We can assert that
\begin{align}\label{assert2}
\Sigma^{**} \cap \partial S_a = \varnothing.
\end{align}
Actually, this assertion will directly come from the basic proposition below, since $\Sigma^{**}$ is connected.

\begin{proposition}\label{partial}
For $\forall p \in \partial S_a \subset S^2$, we can find a domain $B \subset S^2$ with $p \in B$, such that $$\Sigma(B \backslash \overline{S_a}) \subset \{ x_1^2 + x_2^2 > \frac{3}{4} \}$$
\end{proposition}

\begin{proof}
We will define a vector field $X$ near $\partial S_a$. If we use the spherical coordinates in $S^2 \backslash (0,0, \pm 1)$: 
$$(\theta, \phi) \mapsto (\sin \theta \cos \phi , \sin \theta \sin \phi , \cos \theta).$$
We define a tangent vector field on $S^2 \backslash (0,0,\pm 1)$ as 
$$X(\theta, \phi) = (\cos \theta \cos \phi , \cos \theta \sin \phi , -\sin \theta).$$
Define $\Sigma_T$ as the tangent map of $\Sigma$, then $\Sigma_T X$ can be seen as a smooth vector field on $\Sigma$ near $\partial S_a$.
 
We can select the geodesic circles $G_{\phi}$ passing $(0,0,\pm 1)$ in $S^2$, whose $\phi$ coordinate is constant. They can be seen as the integral curves of $X$, so $\Sigma(G_{\phi})$ is the integral curves of $\Sigma_T X$.
 
Define $\pmb{v}_1 = (0,0,-1)$, and might as well, assume that $x_3(p) > 0$. Since $\Sigma_T X$ is smooth and $\Sigma_T = id$ in $S^2 \backslash S_a$, select a domain $p \in B \subset S^2$ such that for $\forall q \in B$, we have $$\Sigma_T X_q \cdot \pmb{v}_1 \approx \frac{\sqrt{3}}{2}> 0,$$
and
\begin{align}\label{Txq}
\Sigma_T X_q - (\Sigma_T X_q \cdot \pmb{v}_1)\pmb{v}_1  \approx (\frac{\cos \phi}{2}, \frac{\sin \phi}{2}, 0)
\end{align}
which is outward.

Hence, if $q \notin \overline{S}_a$, take the intergral curve of $T_{\Sigma} X$ passing $\Sigma(q)$, denoted as $I_q$, with $I_q(0) \in \partial S_a \cap B$, and it is apparent that $x_1^2(I_q) + x_2^2(I_q)$ increases from~\ref{Txq}. Thus we have $x_1^2(\Sigma(q)) + x_2^2(\Sigma(q)) > 3 /4$, which is what we want.
\end{proof}

Now define $B'' = B_0 \cup \tilde{B}_0$, and from $\Sigma^{**} \cap \{ x_1 ^2 + x_2 ^2 < 1 / 4 \} \neq \varnothing$, we know $\exists t_0^{**} \in (0, 1/2)$, such that $\Sigma^{**} \cap U_{t^{**}_0} \neq \varnothing$. Define $t^{**} = \inf \{ t \in (0, 1/2) | \Sigma^{**} \cap U_t \neq \varnothing \}$.

If $t^{**} > 0$, then from lemma~\ref{Tundu}, we know $U_{t^{**}} \cap \partial \Sigma^{**} = \varnothing$, so $U_{t^{**}}$ must be tangent with $\Sigma^{**}$ at their intersect points. Hence, from the Tangency Principle, we know $\Sigma^{**} \cap U_{t^{**}}$ must be both open and closed in $\Sigma^{**}$, which is impossible.
 \begin{figure}[!h]
   \setlength{\belowcaptionskip}{0.1cm}
   \centering
   \includegraphics[width=0.56\textwidth]{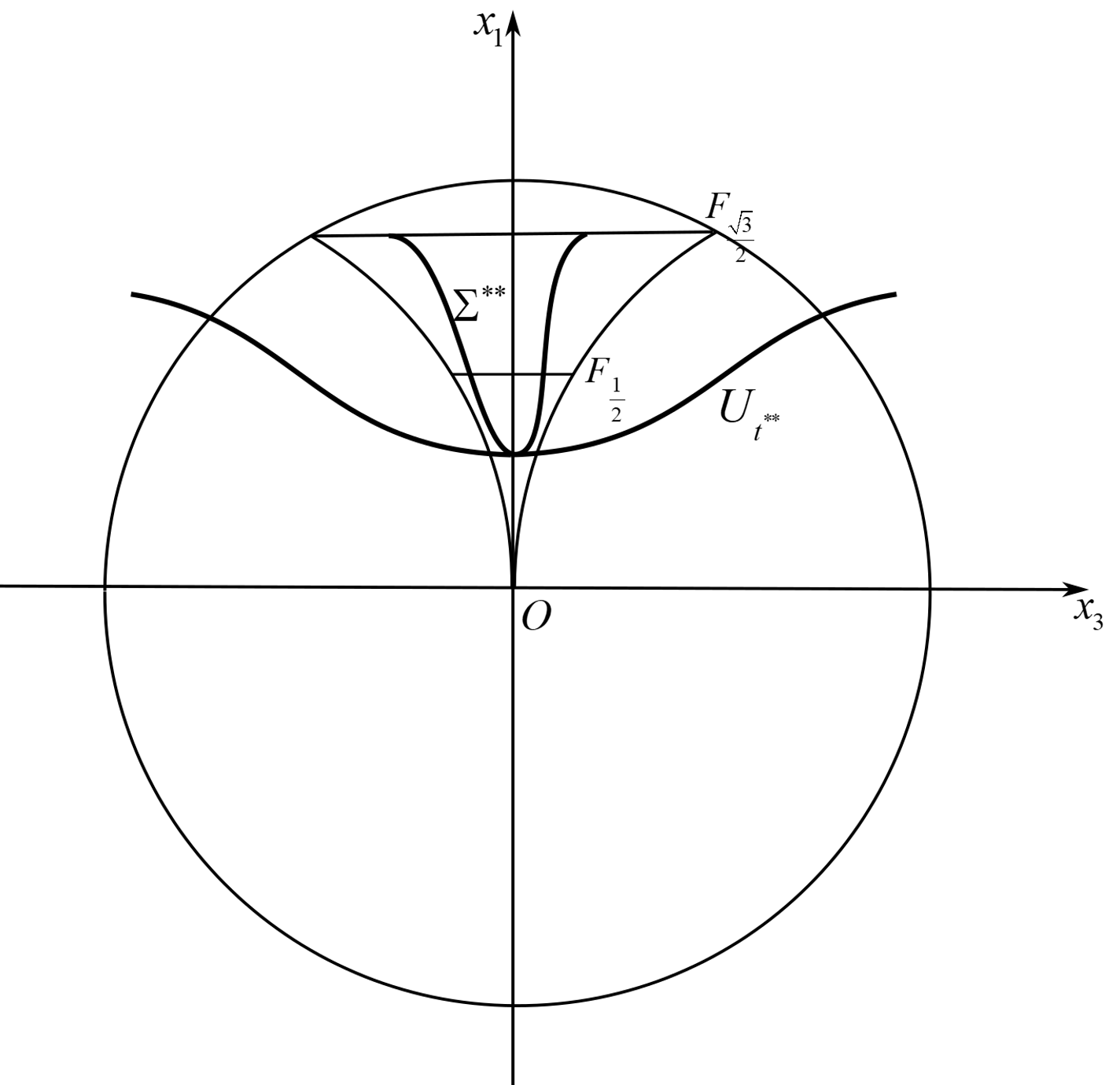}
	\caption*{$\Sigma^{**}$ and selection of $U_{t^{**}}$ as the profile in $P_2$}
   \label{fig4}
 \end{figure}

If $t^{**} = 0$, since the assertion~\ref{assert2}, we know that $\Sigma^{**}$ must be tangent with $B''$ at some intersect points. Also, it is apparent that $O \notin \Sigma^{**}$ from the regularity of $\Sigma$ and ~\ref{assert1}. Hence, similar to the condition of $t^{**} >0$, we also get the contradiction.

As a result, we finish the proof of Theorem~\ref{Prigi}.
\end{proof}

\section{The local rigidity results}

The trap-slice lemma is still the main tool for the local rigidity problem. What we need is to construct the surface for comparison, i.e. the slices, in a suitable trap (which is almost the region $\Theta$ in the definition of local regidity.)

To construct such slices near $S_a$, we will consider a local family of CMC surface pieces $\{ \widetilde{C}(a,t) \}$ near $S_a$ with the same boundary. For this, we will first discuss the features of those generatrices of Delaunay surfaces in $P_2$, which will generate a series of CMC surface in $\R^3$, i.e. Delaunay surfaces. In order to discribe their features, we turn to the ODE determining them:

\begin{proposition}\label{delaunay}
Fix $a \in (0,1)$. In $P_2$, consider the system when $x_3  \ge 0$:

\begin{align*}
\frac{dx_1}{dx_3} = -\sqrt{(\frac{x_1}{H x_1^2 +t-H t^2})^2 - 1}\\
x_1(0) = t
\end{align*}
where $H$, $t$ are the parameters satisfying $H \approx 1$ and $t \approx 1$. It has the unique solution that strictly decreases, which can be written near $(0,1) \in P_2$ with $0 < x_1 < t, x_3 > 0$, as
\begin{align}\label{basic-ode}
x_3 =\int_{x_1}^{t} [(\frac{x_1}{H x_1^2 +t-H t^2})^2 - 1]^{-\frac{1}{2}} dx_1
\end{align}

\begin{enumerate}
  \item We can find $\delta_a > 0$ and $\epsilon_a \ll 1$, such that if $|H-1|+ |t-1| < \epsilon_a$, the solution~\ref{basic-ode} is well defined in $x_3 \in [0, \sqrt{1-a^2} + \delta_a]$, denoted as $x_1 = c(H,t,x_3)$, and we use $c(H,t)$ as the abbreviation of this function.
  \item Denote the even extension of $c(H,t)$ about $x_1$ axis still as $c(H,t)$. In $\R^3$, rotate each $c(H,t)$ around $x_3$ axis, and it will create a Delaunay surface piece $C(H,t)$, which is CMC. The mean curvature of $C(H,t)$(with the inward normal) is exactly $H$.
\end{enumerate}
\end{proposition}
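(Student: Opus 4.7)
The plan is to recognize the ODE as the classical first integral of the Delaunay equation for CMC surfaces of revolution, which makes part (2) essentially a reversal of the derivation, and then to establish part (1) by desingularizing the integrand at $x_1 = t$ and invoking continuous dependence on $(H,t)$.

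For the geometric set-up, let $\phi$ denote the angle the tangent to the generating curve makes with the positive $x_3$ axis, so $\cos\phi = dx_3/ds$, $\sin\phi = dx_1/ds$ along the branch $x_3>0$, $x_1<t$ (with $s$ arclength from the bulge $(0,t)$). A direct computation of the mean curvature of a surface of revolution with respect to the inward normal yields the well-known first integral $x_1\cos\phi - Hx_1^2 = c$, and the initial condition $x_1 = t$, $\cos\phi = 1$ fixes $c = t - Ht^2$. Using $(dx_1/dx_3)^2 = \tan^2\phi = \cos^{-2}\phi - 1$ and selecting the decreasing branch recovers exactly the ODE in the statement. This identification immediately gives part (2): once $c(H,t)$ is smooth, the rotated surface has mean curvature identically $H$; the even extension about $x_3=0$ is smooth because $dx_1/dx_3=0$ at $x_3=0$ together with analyticity of the ODE forces every odd-order derivative of $x_1(x_3)$ to vanish there.

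For part (1), the base case $H=t=1$ reduces the ODE to $dx_1/dx_3=-x_3/\sqrt{1-x_3^2}$ with explicit solution $x_1 = \sqrt{1-x_3^2}$, defined on $[0,1)$. I would choose $\delta_a > 0$ with $\sqrt{1-a^2}+\delta_a<1$. The key technical point is that the integrand in \ref{basic-ode} has a square-root singularity at $x_1=t$. A direct factoring
\[ x_1^2 - (Hx_1^2 + t - Ht^2)^2 \;=\; H\,(t-x_1)\,\bigl(x_1-(1/H-t)\bigr)\,\bigl(x_1 + Hx_1^2 + t - Ht^2\bigr) \]
(equivalently, the substitution $x_1 = t - u^2$) shows the integrand equals $C(H,t,x_1)\,(t-x_1)^{-1/2}$ with $C$ smooth and bounded uniformly for $(H,t)$ near $(1,1)$ and $x_1 \in [1/H - t,\,t]$. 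Hence \ref{basic-ode} defines a convergent improper integral that is continuous on $[1/H-t,\,t]$, smooth in its interior, and jointly smooth in $(H,t,x_1)$.

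Finally I would invoke continuous dependence: when $H=t=1$ the value $x_3 = \sqrt{1-a^2}+\delta_a$ is attained at $x_1^\ast := \sqrt{1-(\sqrt{1-a^2}+\delta_a)^2} > 0$, strictly inside $(0,1)$. By the joint smoothness established above, for $(H,t)$ with $|H-1|+|t-1|<\epsilon_a$ sufficiently small the solution $x_3(x_1;H,t)$ remains uniformly close to the base solution on any compact subinterval bounded away from the endpoints, hence still attains $\sqrt{1-a^2}+\delta_a$ at some $x_1$ close to $x_1^\ast$, which lies strictly between the neck $1/H-t$ and the bulge $t$. Inverting via the strict monotonicity $dx_1/dx_3 < 0$ on the open interval yields $c(H,t,x_3)$ on $[0,\sqrt{1-a^2}+\delta_a]$, as claimed. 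The main obstacle throughout is the desingularization of the integrand at $x_1=t$ and the accompanying joint smoothness; once the factoring above (or the change of variable $x_1 = t - u^2$) resolves this square-root singularity, the remainder is routine ODE continuity together with the classical Delaunay geometry.
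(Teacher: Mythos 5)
Your proposal is correct and follows essentially the same route as the paper's (terse) proof: part (2) via the Delaunay flux first integral $\tfrac{c}{\sqrt{1+c'^2}}-Hc^2=\mathrm{const}$, which is exactly the paper's formula $H=\bigl(\tfrac{c}{\sqrt{1+c'^2}}\bigr)'/(c^2)'$, and part (1) via continuous dependence on $(H,t)$ from the base case $c(1,1)=$ semicircle, with your desingularization at $x_1=t$ merely supplying details the paper leaves implicit. Only a cosmetic caveat: your coefficient $C(H,t,x_1)$ is not bounded up to the neck $x_1=1/H-t$ (there is a second square-root singularity there), but this is harmless since the range of $x_1$ you actually use stays uniformly away from the neck.
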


\begin{proof}
The first of this proposition will be apparently guaranteed since $x_1 = c(H,t,x_3)$ is continuous of $H$ and $t$ and $c(1,1)$ is exactly the semicircle. The second comes from the basic formula: $$H(x_3) = (\frac{c}{\sqrt{1+c'^2}})' / (c^2)'$$
Where $c$ is any function $x_1 = c(x_3)$, and the derivation is to $x_3$, and $H(x_3)$ is the mean curvature of the surface generated by rotating $c$ around $x_3$ axis, with the inward normal. Then the second can be easily verified from this formula and the defination of $c(H,t)$.
\end{proof}

\begin{corollary}\label{ode}
We define:
\begin{align}\label{D}
D(H,t,x) = \sqrt{(\frac{x}{H x^2 +t-H t^2})^2 - 1}
\end{align}
Then the ODE of $c(H,t)$ can be shown as
\begin{equation*}
\int_{x_1}^{t} \frac{1}{D(H, t, x_1)}dx_1 = x_3
\end{equation*}
\end{corollary}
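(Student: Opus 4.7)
The proposal is almost immediate: the corollary is a notational repackaging of the integral formula already established in Proposition~\ref{delaunay}. I would begin by observing that the definition
\[
D(H,t,x) = \sqrt{\left(\frac{x}{Hx^2 + t - Ht^2}\right)^2 - 1}
\]
gives, on the range of $x_1$ where the quantity under the square root is nonnegative,
\[
\frac{1}{D(H,t,x_1)} = \left[\left(\frac{x_1}{Hx_1^2 + t - Ht^2}\right)^2 - 1\right]^{-1/2}.
\]
So the task reduces to substituting $1/D$ for this fractional power in the integrand of equation~\eqref{basic-ode}.

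The only thing that is worth checking beyond the substitution is that $D(H,t,x_1)$ is actually well-defined (i.e.\ the radicand is nonnegative) throughout the range of integration $x_1 \in (x_1, t)$. This is exactly the content of part~(1) of Proposition~\ref{delaunay}: for $(H,t)$ with $|H-1|+|t-1|<\epsilon_a$, the function $x_3 = c(H,t,x_3)^{-1}$ is well-defined on $[0, \sqrt{1-a^2}+\delta_a]$, which is equivalent to saying that the expression under the square root in~\eqref{basic-ode} stays nonnegative on the corresponding $x_1$-interval. Hence $1/D(H,t,x_1)$ is finite and integrable on the relevant interval, and the identity
\[
\int_{x_1}^{t} \frac{1}{D(H,t,x_1)}\,dx_1 = x_3
\]
follows by direct substitution.

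There is no real obstacle here; the corollary is stated separately only because the symbol $D(H,t,x)$ will be reused heavily in subsequent ODE computations (e.g.\ in the local rigidity analysis), and it is convenient to fix the notation and the integral form of the generatrix equation once and for all.
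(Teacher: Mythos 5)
Your proposal is correct and matches the paper's intent: the corollary is simply a restatement of equation~\eqref{basic-ode} from Proposition~\ref{delaunay} using the abbreviation $D(H,t,x)$, which is why the paper offers no separate proof. Your remark that well-definedness of $1/D$ on the integration range is already guaranteed by part~(1) of Proposition~\ref{delaunay} is the only substantive check, and it is handled correctly.
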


\begin{remark}
For $H \approx 1, t \approx 1$ in proposition~\ref{delaunay}, $c(H,t)$ coincides for all different $a$ when all the parameters are in the domain of definition, which tells us that we do not need to set $a$ as one parameter for $c$.
\end{remark}

It is apparent that we can find $\epsilon'_a \in (0, \epsilon_a)$ such that if $|H-1|+|t-1| < \epsilon'_a$, there exists unique function $x^*(a,H,t) \in (\sqrt{1-a^2} - \delta_a, \sqrt{1-a^2} + \delta_a)$, defined as $$x^*(a,H,t) = c^{-1}(H,t)(a)$$

Now we will consider an important feature of $x^*$, which will help us consider some monotonicity of $c(H,t)$:

\begin{proposition}\label{x3mono}
For $x^*(a,H,t)$ with well defined parameters, We have $$\frac{\partial x^*}{\partial H} < 0.$$
\end{proposition}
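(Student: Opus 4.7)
The plan is to represent $x^*$ as a definite integral and differentiate under the integral sign. By Corollary~\ref{ode} together with the defining relation $x^*(a,H,t) = c^{-1}(H,t)(a)$, one has
\begin{equation*}
x^*(a,H,t) = \int_a^t \frac{dy}{D(H,t,y)}.
\end{equation*}
Writing $f(H,t,y) = y/(t + H(y^2 - t^2))$ so that $D = \sqrt{f^2 - 1}$, a direct calculation gives
\begin{equation*}
\frac{\partial f}{\partial H} = \frac{y(t^2 - y^2)}{\bigl(t + H(y^2 - t^2)\bigr)^2}, \qquad \frac{\partial}{\partial H}\left(\frac{1}{D}\right) = -\frac{f}{D^3}\,\frac{\partial f}{\partial H}.
\end{equation*}
For $y \in (a,t)$ with $(H,t)$ sufficiently close to $(1,1)$ one verifies that $f > 1$ and $\partial_H f > 0$ (the latter because $y(t^2 - y^2) > 0$ when $0 < y < t$), so the $H$-derivative of the integrand is strictly negative pointwise.

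The main obstacle is justifying the interchange of derivative and integral: $1/D$ already has an integrable $(t-y)^{-1/2}$ singularity at the upper endpoint $y = t$, and a priori $\partial_H(1/D)$ could behave like $(t-y)^{-3/2}$. To control this I would track leading orders as $y\to t$. Factoring
\begin{equation*}
f - 1 = \frac{(y - t)\bigl(1 - H(y+t)\bigr)}{t + H(y^2 - t^2)}
\end{equation*}
shows $D \sim \kappa(H,t)\,(t-y)^{1/2}$ for a continuous positive $\kappa$, while the identity $\partial_H f = y(t-y)(t+y)/(t + H(y^2-t^2))^2$ shows $\partial_H f$ vanishes to first order at $y = t$. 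Combining these gives $\partial_H(1/D) = O\bigl((t-y)^{-1/2}\bigr)$, uniformly for $(H,t)$ in a small neighborhood of the base point. Since this bound is integrable on $(a,t)$, dominated convergence legitimizes differentiation under the integral, yielding
\begin{equation*}
\frac{\partial x^*}{\partial H} = -\int_a^t \frac{f\,\partial_H f}{D^3}\,dy < 0.
\end{equation*}

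If the singular-integral bookkeeping turns out to be fussy, a cleaner alternative is to substitute $y = t - s^2$, which recasts the integral as $\int_0^{\sqrt{t-a}} 2s/D(H,t,t-s^2)\,ds$. A short expansion near $s = 0$ shows that $D \sim s\sqrt{4H - 2/t}$, so the new integrand and its $H$-derivative extend continuously (and nowhere vanish) at $s = 0$; differentiation under the integral then reduces to the elementary case of a continuous integrand on a compact interval, and the resulting integrand is again manifestly negative, concluding the proof.
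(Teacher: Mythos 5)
Your proposal is correct and follows essentially the same route as the paper: both start from the integral representation $x^*(a,H,t)=\int_a^t D(H,t,y)^{-1}\,dy$ and conclude from the pointwise monotonicity $\partial_H D>0$ (equivalently $\partial_H(1/D)<0$) on $(a,t)$. The only difference is that you carefully justify differentiation under the integral sign at the singular endpoint $y=t$, a point the paper's one-line proof leaves implicit, so your write-up is a slightly more rigorous version of the same argument.
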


\begin{proof}
Directly from~\ref{D}, it is easy to verify that $$\frac{\partial}{\partial H} D(H,t,x)> 0 \quad \forall x \in (x_1 ,t).$$
Hence, this proposition it trivial from
\begin{align}\label{x*}
x^*(a,H,t) = \int_{a}^{t} \frac{1}{D(H, t, x_1)}dx_1.
\end{align}
\end{proof}

\begin{remark}
Fix $a \in (0,1)$, and consider the equation of $H$ and $t$ below:
\begin{align}\label{Ha}
x^*(\sqrt{1-a^2}, H,t) = a.
\end{align}
Then from the implicit function theorem, $\exists \epsilon''_a \in (0, \epsilon'_a)$, such that~\ref{Ha} can be seen as a function $H = H(t), |t-1| \le \epsilon''_a$. We denote this function as $H^a$.
\end{remark}

After those preparations, we can introduce the generatrix of the surface of comparison we need. Define
\begin{align}\label{tildec}
\tilde{c}(a,t) = c(H^a(t), t)
\end{align}
and $$\widetilde{C}(a,t) = C(H^a(t), t) \cap \{ |x_3| \le \sqrt{1-a^2} \}.$$
It is apparent that $\tilde{c}(a,t, \sqrt{1-a^2}) = a$.

We will use $\tilde{c}$ to generate the "slice" we need, but we do not know whether $H^a(t)$ will increases or decreases near $t = 1$, which is the key for the local $H$ rigidity. Actually, we can see from below that $a$ will influence the monotonicity of $H^a$ near $t= 1$.\\
Define a constant $a_0 \in (1/2, 1)$ as the unique null point of the function $$g(a) \triangleq -\ln \frac{1+\sqrt{1-a^2}}{a} +  \frac{1}{\sqrt{1 - a^2}}$$
It can be easily extimated that $a_0 \approx 0.5524$. Then we have the lemma below:

\begin{lemma}\label{Htwith1}
For $a \in (0,1)$, there exists $\tilde{\epsilon}_a \in (0, \epsilon''_a)$, such that:\\
(1) If $0 < a < a_0$, then $H^a(t) > 1, 1- \tilde{\epsilon}_a \le t <1$, and $H^a(t) < 1, 1 < t \le 1+ \tilde{\epsilon}_a$\\
(2) If $a = a_0$, then $H^a(t) < 1, 0 < |t-1| \le \tilde{\epsilon}_a$\\
(3) If $a_0 < a < 1$, then $H^a(t) < 1, 1- \tilde{\epsilon}_a \le t <1$, and $H^a(t) > 1, 1 < t \le 1+ \tilde{\epsilon}_a$.
\end{lemma}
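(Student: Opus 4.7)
My plan is to reduce the trichotomy in the lemma to a sign analysis of $(H^a)'(1)$, with a further second-derivative check at the critical value $a_0$. Since at $(H,t)=(1,1)$ the Delaunay generatrix $c(1,1,\cdot)$ is just the unit semicircle, we have $H^a(1)=1$ for every $a\in(0,1)$. Implicitly differentiating the defining relation~(\ref{Ha}) in $t$ and invoking Proposition~\ref{x3mono}---which tells us $\partial x^*/\partial H < 0$---reduces the whole question to the sign of $\partial x^*/\partial t|_{(H,t)=(1,1)}$ evaluated along the curve of radius $a$: that sign coincides with the sign of $(H^a)'(1)$.

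To compute this partial derivative I would exploit the substitution that desingularizes the Delaunay integral~(\ref{basic-ode}) at its upper endpoint. Writing $t=1+\tau$ and noting that $D(1,1+\tau,\cdot)$ has zeros precisely at $x_1=\tau$ and $x_1=1+\tau$, the elliptic parametrization $x_1^2 = \tau^2\cos^2\alpha + (1+\tau)^2\sin^2\alpha$ rewrites the height as the smooth integral
\[
x^*(a,1,1+\tau) = \int_{\alpha_a(\tau)}^{\pi/2}\!\left[x_1(\alpha,\tau) - \frac{\tau(1+\tau)}{x_1(\alpha,\tau)}\right]d\alpha,
\]
with the lower endpoint defined by $x_1(\alpha_a,\tau)=a$ (so $\alpha_0=\arcsin a$ and one finds $\alpha_a'(0)=-a/\sqrt{1-a^2}$). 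A first-order Taylor expansion then yields
\[
\left.\frac{\partial x^*}{\partial t}\right|_{(a,1,1)} = \frac{1}{\sqrt{1-a^2}} - \ln\frac{1+\sqrt{1-a^2}}{a} = g(a).
\]
Direct differentiation gives $g'(a)=1/[a(1-a^2)^{3/2}]>0$ on $(0,1)$; combined with $g(0^+)=-\infty$ and $g(1^-)=+\infty$, the function $g$ has a unique root $a_0$ (whose numerical value is about $0.5524$). Hence for $a\neq a_0$ one has $(H^a)'(1)\neq 0$, so $H^a$ is strictly monotone through $t=1$ in the direction prescribed by the sign of $g(a)$---this immediately yields cases (1) and (3).

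The main obstacle is case (2), where $(H^{a_0})'(1)=0$ and a second-order analysis is unavoidable. Differentiating~(\ref{Ha}) twice and substituting $(H^{a_0})'(1)=0$ reduces the task to showing $\partial^2 x^*/\partial t^2|_{(a_0,1,1)}<0$. Continuing the Taylor expansion of the $\alpha$-parametrized integral above to $O(\tau^2)$---the care lies in bookkeeping the $\tau^2$ contributions from both the integrand and from the endpoint expansion $\alpha_a(\tau)=\alpha_0+\tau\alpha_1+\tau^2\alpha_2+O(\tau^3)$---should produce the closed form
\[
\left.\frac{\partial^2 x^*}{\partial t^2}\right|_{(a,1,1)} = \ln\frac{1+\sqrt{1-a^2}}{a} + \frac{a^2-2}{(1-a^2)^{3/2}}.
\]
At $a=a_0$ the defining identity $g(a_0)=0$ substitutes $\ln\frac{1+\sqrt{1-a_0^2}}{a_0}$ by $1/\sqrt{1-a_0^2}$, collapsing the expression cleanly to $-1/(1-a_0^2)^{3/2}<0$. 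Combined with $\partial x^*/\partial H<0$ from Proposition~\ref{x3mono}, this forces $(H^{a_0})''(1)<0$, so $H^{a_0}$ has a strict local maximum equal to $1$ at $t=1$, and case (2) follows from Taylor's formula.
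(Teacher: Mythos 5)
Your proposal is correct and follows essentially the paper's own route: the paper likewise reduces the trichotomy, via Proposition~\ref{x3mono} and the relation $x^*(a,H^a(t),t)=\sqrt{1-a^2}$, to the sign of the first (and, at $a_0$, second) $t$-derivative at $t=1$ of the height of the $H=1$ Delaunay arc, obtaining exactly your $g(a)$ and $h(a)=\ln\frac{1+\sqrt{1-a^2}}{a}+\frac{a^2-2}{(1-a^2)^{3/2}}$ with $h(a_0)=-\frac{1}{(1-a_0^2)^{3/2}}<0$ (this is Lemma~\ref{H=1} and the appendix). The only differences are cosmetic: you differentiate $H^a$ directly through the implicit function theorem and compute the integrals by a desingularizing trigonometric substitution with a Taylor expansion (leaving the second-order bookkeeping asserted rather than executed), whereas the paper compares $x_a(t)$ with $\sqrt{1-a^2}$ without differentiating $H^a$ and evaluates the same derivatives via the Legendre forms $F(k,\theta)$, $E(k,\theta)$ and their standard derivative identities; your closed forms agree with the paper's.
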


\begin{proof}
It is difficult to directly consider $dH^a / dt$, but we will introduce another lemma about $c(1,t)$ to assist us.

\begin{lemma}\label{H=1}
Define $x_a(t) = x^*(a,1,t)$. Then there exists $0 < \eta_a \ll 1$ such that:\\
(1)When $a > a_0$, then $x_a < \sqrt{1 - a^2}, 1-\eta_a \le t < 1$, and $x_a > \sqrt{1 - a^2}, 1 < t\le 1+\eta_a$\\
(2)When $a < a_0$, then $x_a > \sqrt{1 - a^2}, 1-\eta_a \le t < 1$, and $x_a < \sqrt{1 - a^2},1 < t \le 1+\eta_a$\\
(3)When $a = a_0$, then $x_a < \sqrt{1 - a^2}, 0 <|t-1|\le \eta_a$
\end{lemma}

The proof of lemma~\ref{H=1} will be put in the Appendix.

It can be noticed that the inequality sign in these two lemmas are consistent. Actually, we can prove this consistency, which will finish the proof of lemma~\ref{Htwith1}.

It is apparent from ~\ref{x*}, ~\ref{Ha} and ~\ref{tildec} that
\begin{align*}
\sqrt{1-a^2} = x^*(a,H^a(t),t) \\
x_a(t) = x^*(a,1 ,t)
\end{align*}
And from lemma~\ref{x3mono}, it is apparent that $$x_a < \sqrt{1-a^2} \Rightarrow H^a(t) < 1$$
$$x_a > \sqrt{1-a^2} \Rightarrow H^a(t) > 1$$
Hence, if we choose $\tilde{\epsilon}_a = \min \{ \eta_a, \epsilon''_a \}$, then lemma~\ref{Htwith1} will be directly proved from lemma~\ref{H=1}, and this is what we need.
\end{proof}

\begin{corollary}\label{yoko}
If $a \in [a_0, 1)$ and $t \in [1-\tilde{\epsilon}_a , 1)$, then at the same $x_1$ coordinate, we have $$\frac{d\tilde{c}(a, t)}{dx_3} |_{x_1} > \frac{d\tilde{c}(a, 1)}{dx_3} |_{x_1} \quad \forall x_1 \in [a, t]$$
And if $a > a_0$ and $t \in (1, 1+\tilde{\epsilon}_a]$, then we have $$\frac{d\tilde{c}(a, t)}{dx_3} |_{x_1} < \frac{d\tilde{c}(a, 1)}{dx_3} |_{x_1} \quad \forall x_1 \in [a, 1].$$
\end{corollary}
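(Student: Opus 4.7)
\noindent\textbf{Proof proposal for Corollary~\ref{yoko}.} The plan is to reduce both slope inequalities to a single algebraic sign condition on one function $\Phi(t,x_1)$, and then extract that sign from a first-order Taylor expansion at $t=1$ controlled by Lemma~\ref{Htwith1}.

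First I would translate the slopes into $D$. On the branch $x_3 \geq 0$, Proposition~\ref{delaunay} yields
\[
\frac{d\tilde c(a,t)}{dx_3}\bigg|_{x_1} = -D\bigl(H^a(t),t,x_1\bigr),
\]
so at a common $x_1$ the target inequality $d\tilde c(a,t)/dx_3 > d\tilde c(a,1)/dx_3$ is equivalent to $D(H^a(t),t,x_1) < D(1,1,x_1)$. Using~\ref{D} and squaring (both sides nonnegative, and the denominator $Hx_1^2+t-Ht^2$ is positive for $|H-1|,|t-1|$ small), this collapses to the sign condition
\[
\Phi(t,x_1) \,:=\, \bigl(H^a(t)-1\bigr)x_1^2 + t - H^a(t)\,t^2 \,>\, 0,
\]
while the reverse slope inequality corresponds symmetrically to $\Phi(t,x_1)<0$.

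Next I would perform a first-order analysis around $t=1$. Since $H^a(1)=1$, one has $\Phi(1,x_1)\equiv 0$, and a direct differentiation gives
\[
\frac{\partial \Phi}{\partial t}\bigg|_{t=1} = (H^a)'(1)\,(x_1^2-1) - 1.
\]
The function $H^a$ is smooth by the implicit function theorem (Proposition~\ref{x3mono} supplies the nonvanishing partial derivative). Invoking Lemma~\ref{Htwith1}: for $a=a_0$, the function $H^{a_0}$ attains a strict local maximum at $t=1$, so $(H^{a_0})'(1)=0$; for $a\in(a_0,1)$, $H^a$ crosses $1$ from below to above at $t=1$, so $(H^a)'(1)\geq 0$. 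In either case, since $x_1\in[a,1]$ gives $x_1^2-1\leq 0$, the product $(H^a)'(1)(x_1^2-1)\leq 0$, whence $\partial_t\Phi|_{t=1}\leq -1$, uniformly in $x_1\in[a,1]$.

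Finally, by continuity of $\partial_t\Phi$ I would shrink $\tilde\epsilon_a$ (if necessary) so that $\partial_t\Phi(t,x_1)<-\tfrac12$ throughout $|t-1|\leq\tilde\epsilon_a$ and $x_1\in[a,1]$. Integrating from $t=1$ then gives $\Phi(t,x_1)>0$ for $t<1$ and $\Phi(t,x_1)<0$ for $t>1$ in that range, which, by the first step, is exactly the two claims. I expect no serious obstacle beyond bookkeeping; the one mild subtlety is that Lemma~\ref{Htwith1} yields only the sign of $(H^a)'(1)$, never its magnitude, which is why the argument leans on the universal $-1$ coming from $\partial_t(t-Ht^2)$ at $(1,1)$ to dominate the leading-order expansion.
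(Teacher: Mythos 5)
Your proposal is correct, but it follows a genuinely different route from the paper's proof. Both arguments make the same first reduction: the slope comparison at fixed $x_1$ is equivalent to comparing $D(H^a(t),t,x_1)$ with $D(1,1,x_1)$, i.e.\ to the sign of $\Phi(t,x_1)=(H^a(t)-1)x_1^2+t-H^a(t)t^2$. The paper then settles this sign by a pointwise algebraic estimate that uses only the sign of $H^a(t)-1$ supplied by Lemma~\ref{Htwith1}: for $t<1$ and $a\le x_1\le t$ it writes $t-t^2H^a(t)>t^2(1-H^a(t))\ge x_1^2(1-H^a(t))$, and the reversed chain for $a>a_0$, $t>1$, $x_1\le 1<t$; this needs no regularity of $H^a$ beyond its existence and covers the full stated window $[1-\tilde{\epsilon}_a,1)$ without shrinking. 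You instead expand $\Phi$ to first order at $t=1$: the computation $\partial_t\Phi|_{t=1}=(H^a)'(1)(x_1^2-1)-1$ is right, the extraction of $(H^a)'(1)\ge 0$ (and $=0$ at $a=a_0$) from Lemma~\ref{Htwith1} is legitimate once $H^a$ is $C^1$, and uniformity over $x_1\in[a,1]$ follows by compactness. The trade-offs: your argument requires differentiability of $H^a$, hence $C^1$ dependence of $x^*$ on $(H,t)$ near the singular endpoint of the integral --- regularity the paper implicitly assumes when invoking the implicit function theorem to define $H^a$, but never verifies --- and it proves the corollary only after possibly shrinking $\tilde{\epsilon}_a$, which is harmless since every later use (Remark~\ref{htmono} and the local rigidity theorems) only needs $t$ close to $1$. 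In exchange, your local analysis is more systematic and incidentally also yields the $t>1$ inequality at $a=a_0$ (not claimed in the corollary), which the paper's sign-only algebra cannot reach because there $1-H^{a_0}(t)$ is of second order in $t-1$.
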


\begin{proof}
By Proposition~\ref{delaunay}, for $a \in [a_0, 1)$ and $t \in [1-\tilde{\epsilon}_a , 1)$, what we only need to verify is $$D(H^a(t), t, x_1) < D(1,1,x_1) \Leftrightarrow \frac{x_1}{x_1^2H^a(t)+t-t^2H^a(t)} < \frac{1}{x_1}$$
Since $H^a(t) < 1$ and $a\le x_1 \le t < 1$, we have $$t - t^2 H^a(t) > t^2(1-H^a(t)) \ge x_1^2(1-H^a(t))$$
which shows the first relation of this corollary is true.\\
Similarly, if $a > a_0$, for $t \in (1, 1+\tilde{\epsilon}_a]$, we only need $$D(H^a(t), t, x_1) > D(1,1,x_1) \Leftrightarrow \frac{x_1}{x_1^2H^a(t)+t-t^2H^a(t)} > \frac{1}{x_1}$$
But this time we have $H^a(t) > 1$ and $a\le x_1 \le 1 < t$, hence $$t - t^2 H^a(t) < t^2(1-H^a(t)) \le x_1^2(1-H^a(t))$$
Hence, we prove the second relation similarly.
\end{proof}

\begin{remark}\label{htmono}
From Corollary~\ref{yoko}, and using basic knowledge of ODE, we can easily get $$\tilde{c}(a,t) < \tilde{c}(a,1), \forall a \in [a_0, 1), t \in [1-\tilde{\epsilon}_a , 1)$$ and $$\tilde{c}(a,t) > \tilde{c}(a,1), \forall a \in (a_0, 1), t \in (1, 1+\tilde{\epsilon}_a].$$
\end{remark}

After that, we can consider local $H$ rigidity:

\begin{theorem}\label{localH-}
For $\forall a \in (a_0,1)$, $S_a$ has the local $H^-$ rigidity.
\end{theorem}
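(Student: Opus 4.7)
My plan is to apply the trap-slice lemma (Theorem~\ref{trap-slice-lemma}) with an outer trap constructed from the Delaunay comparison surfaces $\widetilde{C}(a,t)$ introduced above. The crucial input is Lemma~\ref{Htwith1}(3): since $a>a_0$, for $t\in(1,1+\tilde{\epsilon}_a]$ we have $H^a(t)>1$, and by Remark~\ref{htmono} also $\tilde{c}(a,t)>\tilde{c}(a,1)$. Hence for such $t$ the slice $\widetilde{C}(a,t)$ lies strictly outside $S^2$ and carries strictly larger mean curvature than $S_a$, which is precisely what is needed to block outward perturbations with $H(\Sigma_a)\le 1$. To explain why only the outside matters, I first invoke Corollary~\ref{tangawa}(2): if $\Sigma_a\ne \mathrm{id}$ satisfies $H(\Sigma_a)\le 1$, then $\Sigma_a$ must have a nontrivial component in $(D^3)^c$, so it suffices to produce the contradiction there.

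Next I build the trap. Take $B_0=S_a$ and $B_1=\widetilde{C}(a,1+\tilde{\epsilon}_a)$; these are two annuli sharing the two boundary circles $A=\partial S_a$, and together they enclose a toroidal trap $\Omega\subset(D^3)^c$. The family $F_t=\widetilde{C}(a,t)$ for $t\in[1,1+\tilde{\epsilon}_a]$ satisfies $\partial F_t=A\subset B_1$, and after possibly shrinking $\tilde{\epsilon}_a$ foliates $\Omega$ with $F_1=B_0$ and $F_{1+\tilde{\epsilon}_a}=B_1$. Orienting each $F_t$ by the outward normal of the sub-domain $\Omega_t$ (the piece of $\Omega$ on the $B_1$-side) amounts to the axis-inward Delaunay convention of Proposition~\ref{delaunay}, so $H(F_t)=H^a(t)\ge 1$ by Lemma~\ref{Htwith1}(3), with equality only at $t=1$. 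The transversality of $B_0$ and $F_t$ at the common boundary $A$ demanded by Corollary~\ref{cor-trap-slice} is supplied by Corollary~\ref{yoko}, which gives strictly different generatrix slopes at $x_1=a$.

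Now I choose the local neighborhood. Let $\Theta\supset\overline{S_a}$ be any open neighborhood thin enough that $\Sigma_a(S_a)\subset\Theta$ forces $\Sigma_a\cap B_1=\emptyset$; this is possible because $B_1$ lies at uniform positive distance from $\overline{S_a}$. Given a nontrivial $\Sigma_a\subset\Theta$ with $H(\Sigma_a)\le 1$, the reduction above yields $\Sigma_a\cap\Omega\ne\emptyset$; pick a connected component $\Sigma_*$. Its boundary $\partial\Sigma_*$ is contained in $B_0$ since $\Sigma_a$ avoids $B_1$. The outward-of-$\Omega_*$ orientation on $\Sigma_*$ coincides by continuity with the ambient normal $\pmb{n}$ distinguished in the Convention (both point from the outer region back toward the origin side of $S^2$), so $H(\Sigma_*)\le 1$. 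Condition~2 of Theorem~\ref{trap-slice-lemma} is automatic, because $\partial\Sigma_*\subset B_0$ is disjoint from every $\Omega_t$ with $t>1$. Applying the trap-slice lemma with $\alpha=1$ then produces the required contradiction, giving $\Sigma_a=\mathrm{id}$.

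The main technical obstacle will be to verify that $\{F_t\}_{t\in[1,1+\tilde{\epsilon}_a]}$ really foliates all of the trap $\Omega$, as opposed to the merely infinitesimal statement of Corollary~\ref{yoko}: I need $\tilde{c}(a,t_1,x_3)<\tilde{c}(a,t_2,x_3)$ whenever $1\le t_1<t_2\le 1+\tilde{\epsilon}_a$ and $x_3\in[-\sqrt{1-a^2},\sqrt{1-a^2}]$, which I plan to obtain by iterating the ODE-comparison of Corollary~\ref{yoko} with an arbitrary intermediate base-value $t_0$ in place of $1$, possibly after a further shrinking of $\tilde{\epsilon}_a$. A secondary but more routine check is the coincidence of the outward-of-$\Omega_*$ orientation on $\Sigma_*$ with the ambient $\pmb{n}$; this holds because $\Sigma_a$ is $C^2$-close to $S_a$ at its boundary, where $\pmb{n}=\widetilde{\pmb{n}}$ is literally the inward-toward-origin direction.
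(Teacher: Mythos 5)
Your overall strategy (sweep by the Delaunay family with $H^a(t)>1$ for $t>1$, trap the outward excursion supplied by Corollary~\ref{tangawa}(2), conclude by tangency) is the same as the paper's, but building the trap at the parameter $a$ itself creates a genuine gap which the paper deliberately avoids by passing to $a'=(a_0+a)/2\in(a_0,a)$. Your $B_1=\widetilde{C}(a,1+\tilde\epsilon_a)$ is \emph{not} at positive distance from $\overline{S_a}$: its boundary circles are exactly $\partial S_a\subset\overline{S_a}$. Since the definition of local rigidity requires an \emph{open} $\Theta\supset\overline{S_a}$, every admissible $\Theta$ contains interior points of $B_1$ arbitrarily close to the corner $A=\partial S_a$, so no choice of $\Theta$ can force $\Sigma_a\cap B_1=\emptyset$; a perturbation may leave the trap through $B_1$ near the corner while staying in $\Theta$. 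Then neither $\partial\Sigma_*\subset B_0$ nor the ``unless it meets $B_1$'' escape clause of Corollary~\ref{cor-trap-slice} is under control, and the argument collapses. In the paper the comparison surfaces $\widetilde{C}(a',t)$ have boundary on $\partial S_{a'}$, which is disjoint from the compact set $\overline{S_a}$, so $B_1=\widetilde{C}(a',t_1)$ really is at positive distance from $\overline{S_a}$; this is what makes the confinement $\Theta\setminus D^3=\Theta_1$ possible, and it also guarantees that every contact at the critical parameter is an \emph{interior} tangency (the slices' boundary circles never meet $\overline{S_a}\supset\partial\Sigma_*$), so only the interior Tangency Principle is needed and the shared-boundary transversality machinery you invoke is never required.

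Second, even granting confinement, your black-box use of the trap-slice lemma needs $\{\widetilde{C}(a,t)\}_{t\in[1,1+\tilde\epsilon_a]}$ to actually foliate the trap, i.e. $\tilde c(a,t_1,x_3)<\tilde c(a,t_2,x_3)$ for $1\le t_1<t_2$, and your proposed fix (iterating Corollary~\ref{yoko} with an intermediate base value) does not go through: the proof of Corollary~\ref{yoko} compares $D(H^a(t),t,x)$ with $D(1,1,x)$ and uses precisely that the reference curve is the unit circle ($H=1$, $t=1$) together with the sign of $H^a(t)-1$; comparing two members with $t_1,t_2>1$ would require controlling the sign of $x^2\bigl(H^a(t_2)-H^a(t_1)\bigr)-\bigl[(t_2-H^a(t_2)t_2^2)-(t_1-H^a(t_1)t_1^2)\bigr]$, which no result in the paper provides (monotonicity of $H^a$ away from $t=1$ is not established). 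The paper's proof needs no foliation at all: it sets $t_2=\sup\{t\in(1,t_1):\widetilde{C}(a',t)\cap\Sigma_*\ne\varnothing\}$, uses only continuity of $t\mapsto\tilde c(a',t)$ together with Remark~\ref{htmono} to force an interior tangency at $t_2$, and concludes by the Tangency Principle since $H^{a'}(t_2)>1\ge H(\Sigma_*)$. Replacing your trap by the $a'$-family and your black-box application by this direct sup-and-tangency sweep repairs both defects; as written, the proposal does not constitute a proof.
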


\begin{proof}
Define $a' = (a_0 + a) / 2$, and it comes from Remark~\ref{htmono} that $$\tilde{c}(a', t_1) > \tilde{c}(a', 1), \forall t_1 \in (1, 1 + \tilde{\epsilon}_{a'}).$$
We fix such one $t_1$, and , and then define $$B_0 = \widetilde{C}(a',1) \quad B_1 = \widetilde{C}(a', t_1).$$
It should be noticed that both of them are defined about $a'$ rather than $a$.

Then, define $\Theta_1$ the domain enclosed by $B_0$ and $B_1$ directly, and select an open domain $\Theta \subset \R^3$ with $\Theta \setminus D^3 = \Theta_1$. We will then prove that $\Theta$ suits our requests.

Consider any perturbation $\Sigma_{a}$ with $\Sigma_{a} (S_{a}) \subset \Theta$, and $H(\Sigma_{a}) \le 1$. If $\Sigma_{a} \neq id$, from Proposition~\ref{tangawa}, we can select a connected component of $\Sigma_{a} \backslash D^3$, defined as $\Sigma_*$, and it is apparent that $\partial \Sigma_* \subset \overline{S}_a$.

It is apparent from Remark~\ref{htmono} and the continuity of $\tilde{c}(a',t)$, that $\exists \tilde{t} \in (1, t_1)$, such that $$\widetilde{C}(a', \tilde{t}) \cap \Sigma_* \neq \varnothing$$
Since $\Sigma_* \cap B_1 = \varnothing$, and from the compactness of $\overline{\Sigma}_*$ and continuity of $\tilde{c}(a',t)$, define $$t_2 = \sup \{ t \in (1, t_1) : \widetilde{C}(a',t) \cap \Sigma_* \neq \varnothing \}.$$
Hence, $\Sigma_*$ must be tangent with $\widetilde{C}(a', t_2)$ at some inner points, since $\partial \Sigma_* \subset \overline{S}_a \subset S_{a'}$.

From Lemma~\ref{Htwith1}, we know $H^{a'}(t_2) > 1$, then from Tangency Principle, $\Sigma_* \cap \widetilde{C}(a', t_2)$ must be both open and closed in $\Sigma_*$, which is a contradiction.
\end{proof}

And for local $H^+$ rigidity, we have:

\begin{theorem}
For $\forall a \in [a_0,1)$, $S_a$ has local $H^+$ rigidity.
\end{theorem}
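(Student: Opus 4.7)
The strategy mirrors that of Theorem~\ref{localH-}, with the roles of ``inside'' and ``outside'' of $D^3$ exchanged and the slice family chosen so that the Delaunay pieces are pinched inward rather than bulged outward. The key input is the other branch of Lemma~\ref{Htwith1}: for $a \ge a_0$ and $t$ just below $1$ we have $H^a(t) < 1$, with the corresponding slice $\widetilde{C}(a,t)$ lying pointwise inside $\widetilde{C}(a,1)$ by Remark~\ref{htmono}.

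For $a \in (a_0,1)$, set $a' = (a_0+a)/2 \in (a_0,a)$ and fix some $t_1 \in (1 - \tilde{\epsilon}_{a'},\,1)$. Let $B_0 = \widetilde{C}(a',1)$ and $B_1 = \widetilde{C}(a',t_1)$, and let $\Theta_1 \subset D^3$ be the annular region they enclose. Take any open $\Theta \subset \R^3$ containing $\overline{S}_a$ with $\Theta \cap D^3 = \Theta_1$. Suppose there is a non-trivial perturbation $\Sigma_a$ with $\Sigma_a(S_a) \subset \Theta$ and $H(\Sigma_a) \ge 1$. By Corollary~\ref{tangawa}(1), $\Sigma_a \cap \mathring{D^3} \ne \emptyset$; pick a connected component $\Sigma_*$ of $\Sigma_a \cap D^3$ meeting the open ball, so that $\partial \Sigma_* \subset \overline{S}_a \subset S_{a'}$ is separated from $\partial B_0 = \partial B_1 = \partial S_{a'}$. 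Define
\begin{equation*}
t_2 = \inf\{\, t \in (t_1,1) : \widetilde{C}(a',t) \cap \Sigma_* \ne \emptyset \,\}.
\end{equation*}
Just as in Theorem~\ref{localH-}, continuity of $\tilde{c}(a',\cdot)$ together with $\Sigma_* \cap B_1 = \emptyset$ places $t_2$ strictly inside $(t_1,1)$, and at $t_2$ the slice $\widetilde{C}(a',t_2)$ is tangent to $\Sigma_*$ at an interior point, with $\Sigma_*$ lying on the sphere-side. With orientations as in Remark~\ref{remark-rconvex}, $H(\widetilde{C}(a',t_2)) = H^{a'}(t_2) < 1 \le H(\Sigma_*)$, so the Tangency Principle forces $\Sigma_* \cap \widetilde{C}(a',t_2)$ to be both open and closed in the connected set $\Sigma_*$, contradicting the fact that the two surfaces have distinct boundaries.

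The borderline value $a = a_0$ requires separate care, since shrinking $a'$ strictly below $a_0$ would flip the sign in Lemma~\ref{Htwith1}(1). Here Lemma~\ref{Htwith1}(2) supplies $H^{a_0}(t) < 1$ on both sides of $t = 1$, so we run the same construction with $a' = a_0$; the point to verify is that one can still invoke the trap-slice argument when $\partial \Sigma_*$ touches $\partial B_0 = \partial S_{a_0}$. This is precisely the hypothesis covered by Corollary~\ref{cor-trap-slice}, whose transversality condition along the shared circle $\partial S_{a_0}$ is immediate: the sphere's generatrix and that of $\widetilde{C}(a_0,t)$ (obeying the Delaunay ODE with $H = H^{a_0}(t) \ne 1$) meet at $(\sqrt{1-a_0^2},a_0)$ with distinct slopes $\d x_1 / \d x_3$. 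The main obstacle is exactly this boundary case, where the strict separation $\partial \Sigma_* \cap \partial B_0 = \emptyset$ is lost and one has to appeal to the transversal-boundary version of the trap-slice lemma in place of its plainest form.
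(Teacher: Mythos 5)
Your reduction for $a\in(a_0,1)$ (choosing $a'=(a_0+a)/2$ and mirroring Theorem~\ref{localH-} with the slices $\widetilde{C}(a',t)$, $t\in(t_1,1)$, inside $D^3$) is sound at the paper's level of rigor, though it is redundant: by Remark~\ref{monotonicity} the whole theorem follows once $a=a_0$ is settled, and that borderline case is exactly where your argument has a genuine gap. You claim the only new difficulty at $a=a_0$ is that $\partial\Sigma_*$ may touch $\partial B_0=\partial S_{a_0}$, and that Corollary~\ref{cor-trap-slice} with a transversality check disposes of it. It does not. First, ``run the same construction with $a'=a_0$'' presupposes an admissible $\Theta\supset\overline{S}_{a_0}$ whose intersection with $\mathring{D^3}$ is the lens $\Theta_1$ between $B_0=\widetilde{C}(a_0,1)$ and $B_1=\widetilde{C}(a_0,t_1)$; no such $\Theta$ exists, because every open neighborhood of the corner circle $\partial S_{a_0}$ contains ball-interior points outside the lens (points with $|x_3|>\sqrt{1-a_0^2}$, and points radially inside $B_1$) as well as interior points of $B_1$ itself, since $\mathring{B_1}$ accumulates on $\partial S_{a_0}$. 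Hence the hypothesis $\Sigma(S_{a_0})\subset\Theta$ does not confine the perturbed surface to the trap, and the escape clause in Corollary~\ref{cor-trap-slice} --- ``unless $\Sigma_0$ and $B_1$ intersect at their interior points'' --- is precisely what you cannot rule out; the corollary therefore yields no contradiction. Second, the transversality you call ``immediate'' is not: the slope of the generatrix of $\widetilde{C}(a_0,t)$ at the shared endpoint $(\sqrt{1-a_0^2},a_0)$ is $-D(H^{a_0}(t),t,a_0)$, which coincides with the circle's slope $-D(1,1,a_0)$ exactly when $H^{a_0}(t)a_0^2+t-H^{a_0}(t)t^2=\pm a_0^2$; excluding this needs an argument about the pair $(H^{a_0}(t),t)$, not merely $H^{a_0}(t)\neq 1$.

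The paper's proof shows what is actually required at $a=a_0$: a specially built $\Theta$ (the lens $\Theta_1$ enlarged by a thin ball-union $\Theta_2$ around $\partial S_{a_0}$ of radius $r<a_0-\tfrac12$, with $\Theta\setminus\Theta_2\subset\{x_1^2+x_2^2>a_0^2\}$), followed by two confinement arguments before any sweep by the Delaunay slices can start: a trap-slice argument with cylindrical slices $\{x_1^2+x_2^2=(a_0-t)^2\}$ proving $\Sigma(S_{a_0})\cap D^3\subset\{x_1^2+x_2^2\ge a_0^2\}$, and a sweep by vertical translates $E_t$ of $B_1$ proving $\Sigma(S_{a_0})\cap D^3\subset\overline{\Theta}_1$. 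Only then does the final first-touching argument with $\widetilde{C}(a_0,t)$ run, and even there one needs an analogue of Proposition~\ref{partial} to guarantee $\partial\Sigma_{**}\cap\partial S_{a_0}=\varnothing$, so that the tangency occurs at interior points and Theorem~\ref{thm-TP1} applies. None of these steps is supplied by, or replaceable with, your appeal to Corollary~\ref{cor-trap-slice}; as written, your treatment of the only essential case $a=a_0$ does not close.
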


\begin{proof}
We only need to prove $S_{a_0}$ has the local rigidity since Remark~\ref{monotonicity}, and we will also use Tangency Principle and trap-slice lemma and to finish it. However, this time we can not select some $a'$ as the proof of Theorem~\ref{localH-}, so we will need a more complex discussion. Also, we will use the same symbols as Theorem~\ref{localH-} to show a contrast.\\

\noindent
Step 1: It comes from remark~\ref{htmono} that $\tilde{c}(a_0, t_1) < \tilde{c}(a_0, 1), \forall t_1 \in (1 - \tilde{\epsilon}_{a_0}, 1)$. Fix such one $t_1$.

Now we will construct $\Theta \subset \R^3$, such that $\overline{S}_{a_0} \subset \Theta$ and $S_{a_0}$ has $H^+$ rigidity in it.

First, we can denote $B_0 = \widetilde{C}(a_0,1)$ and $B_1 = \widetilde{C}(a_0,t_1)$, and they enclose an open domain $\Theta_1 \in \R^3$.\\ Second, select $r \in (0, a_0 - 1/2)$, and define
\begin{align*}
\Theta_2 &= \cup_{\pmb{x} \in \partial S_{a_0}} B(\pmb{x}, r)\\
\Theta_0 &= (\Theta_1 \cup \Theta_2) \cap \mathring{D^3}
\end{align*}
Now select a domain $\Theta \subset \R^3$ with $\overline{S}_{a_0} \subset \Theta$ and $\Theta \cap \mathring{D^3} = \Theta_0$, such that $$\Theta \backslash \Theta_2 \subset \{ x_1^2 + x_2 ^2 > a_0^2 \}.$$

\noindent
Step 2: After that, we will verify $\Theta$ suits our requests. Consider a perturbation $\Sigma_{a_0}$ with $\Sigma_{a_0} (S_{a_0}) \subset \Theta$, and $H(\Sigma_{a_0}) \ge 1$, and we will prove $\Sigma_{a_0} = id$.

First, we assert that $$\Sigma(S_{a_0}) \cap D^3 \subset \{ x_1^2 + x_2^2 \ge a_0^2 \}$$
To prove this assertion, define
\begin{align*}
&B'_0 = \{ x_1^2 + x_2^2 = a_0^2\} \quad B'_1 = \{ x_1^2 + x_2^2 = \frac{1}{4}\}\\
&\Omega' = \{ \frac{1}{4} < x_1 ^2 + x_2^2 < a_0^2\}\\
&F'_t = \{ x_1 ^2 + x_2^2 = (a_0 - t)^2\}, 0< t < a_0 - \frac{1}{2}.
\end{align*}
It is apparent that $H(F'_t) < 1, \forall t$, so thay can be seen as our slice. Then, consider
$$\Sigma_1 = \Sigma(S_{a_0}) \cap D^3 \cap \{ x_1 ^2 + x_2^2 < a_0^2\}.$$

Since $\Sigma$ is an embedded map, it is apparent that $\partial \Sigma_1 \subset \{ x_1 ^2 + x_2^2 = a_0^2\}$, hence, if $\Sigma_1 \neq \varnothing$, we can choose a connected component of $\Sigma_1$, denoted as $\Sigma_2$, such that $\exists p' \in \Sigma_2$, satisfying $$Op' \cap \Sigma = \varnothing$$
where $Op'$ represent this open line segment. This shows the normal on $\Sigma_2$ is inward.

Hence, we choose $\Omega'$ as the trap, $\{ F'_t \}$ as the slice. Then from trap-slice lemma, we have the contradiction, showing actually $\Sigma_1 = \varnothing$.

 \begin{figure}[!h]
   \setlength{\belowcaptionskip}{0.1cm}
   \centering
   \includegraphics[width=0.56\textwidth]{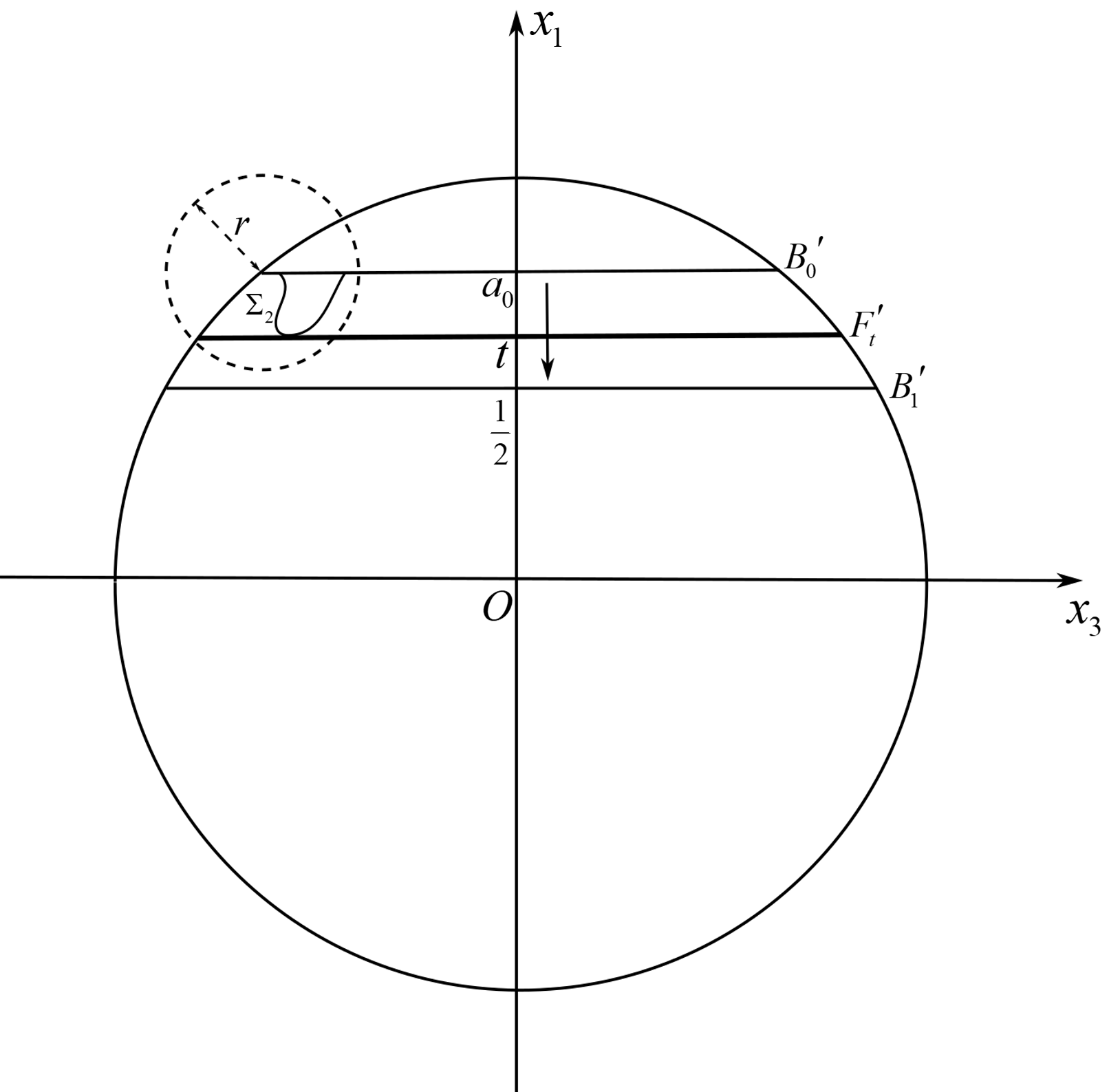}
   \label{fig5}
 \end{figure}

\noindent
Step 3: After that, we will prove $\Sigma(S_{a_0}) \cap D^3 \subset \overline{\Theta}_1$, and if not, define $$\Sigma'_1 = (\Sigma(S_{a_0}) \cap \Theta_2 \cap D^3) \backslash \overline{\Theta}_1 .$$
It is apparent that $\partial \Sigma_1' \subset \widetilde{C}(a_0, t_1)$. From Step 2, we also have $\Sigma_1' \subset S_{a_0}$. Since $\Theta_2$ has two connected components, denoted $\Theta^-$ the one with $x_3 < 0$, and might as well, we can assume that $\Sigma_1' \subset \Theta^-$ without loss of generality.

Define $E_t$ the translation of $B_1$ toward the vector $\tilde{\pmb{r}}_t = (0,0,t)(0 < t < r)$. Consider $$t' = \sup \{t \in (0,r) | E_t \cap \Sigma_1' \neq \varnothing \}$$
and it is apparent that $\Sigma_1'$ is tangent with $E_{t'}$ at some inner points.

Since $H(E_{t'}) = H^{a_0}(t_1) <1$, from Tangency Principle, we know $E_{t'} \cap \Sigma_1'$ is both open and close in $\Sigma_1'$, which will result in a contradiction.

 \begin{figure}[!h]
   \setlength{\belowcaptionskip}{0.1cm}
   \centering
   \includegraphics[width=0.52\textwidth]{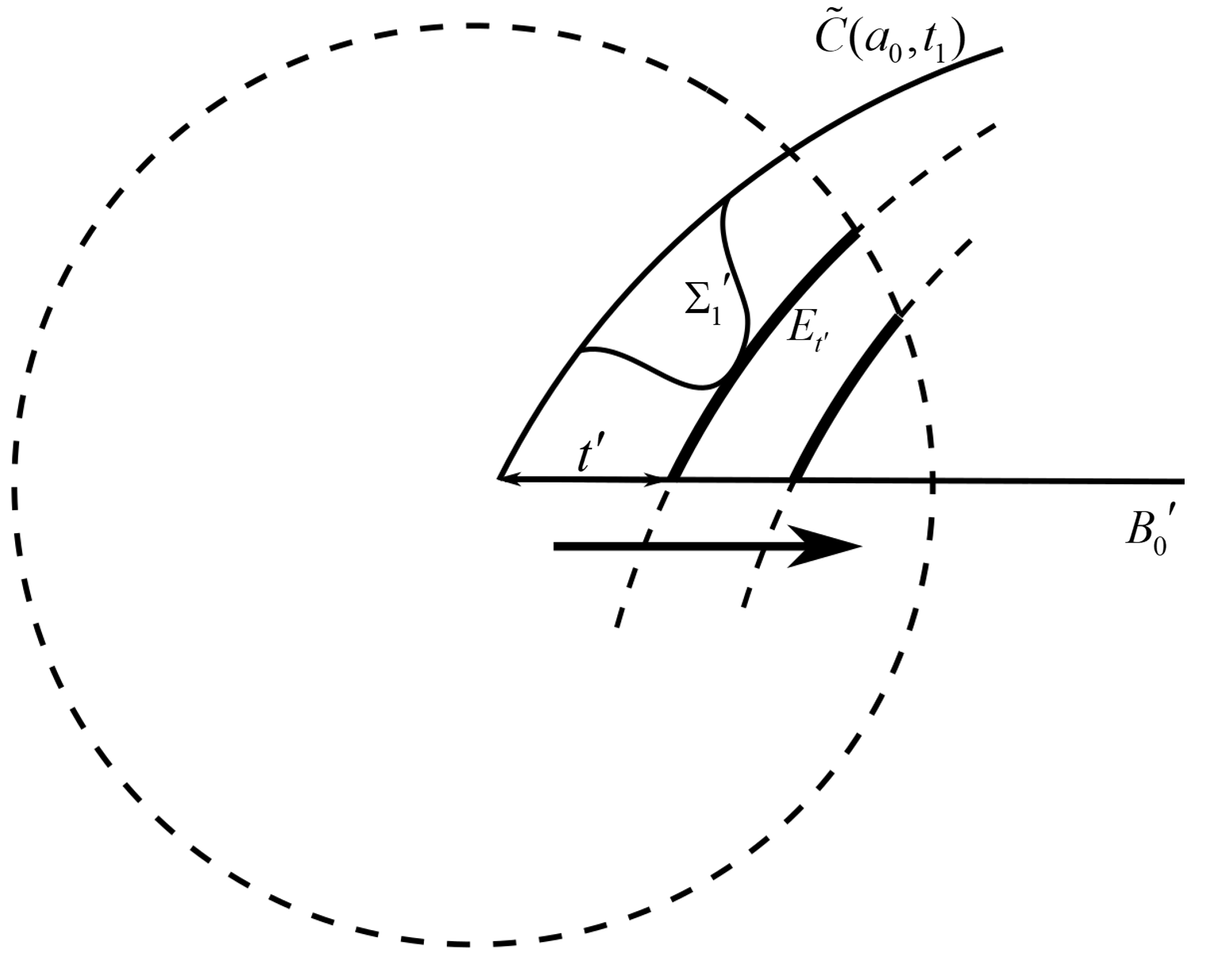}
   \label{fig6}
 \end{figure}

\noindent
Step 4: Now we will finish the proof of this theorem. If $\Sigma_{a_0} \neq id$, then select a connected component of $\Sigma_{a_0}(S_{a_0}) \cap \mathring{D^3}$, denoted as $\Sigma_*$, and from steps above, we know $\Sigma_* \subset \overline{\Theta}_1$. We can also assume that the normal of $\Sigma_*$ is inward by similar method in step 2.

It is apparent that $\exists t_1'' \in (t_1, 1)$, such that $$\widetilde{C}(a_0, t_1'') \cap \Sigma_* \neq \varnothing.$$
Select a connected component of $(\cup_{t \in [t_1, t_1'']}\widetilde{C}(a_0, t)) \cap \Sigma_*$, denoted as $\Sigma_{**}$. Since Remark~\ref{htmono}, we can easily get the proposition similar to Proposition~\ref{partial} on $\widetilde{C}(a_0, t_1'')$, which shows that $$\partial \Sigma_{**} \cap \partial S_{a_0} = \varnothing.$$
Then define $$t_2'' = \inf \{ t \in [t_1, t_1'']: \widetilde{C}(a_0, t) \cap \Sigma_{**} \neq \varnothing\}.$$
So $\Sigma_{**}$ must be tangent with $\widetilde{C}(a_0, t_2'')$ at some inner points. Also, $H^{a_0}(t_2'') < 1$, hence, similarly to the proof of Theorem~\ref{localH-}, we get the contradiction.

 \begin{figure}[!h]
   \setlength{\belowcaptionskip}{0.1cm}
   \centering
   \includegraphics[width=0.56\textwidth]{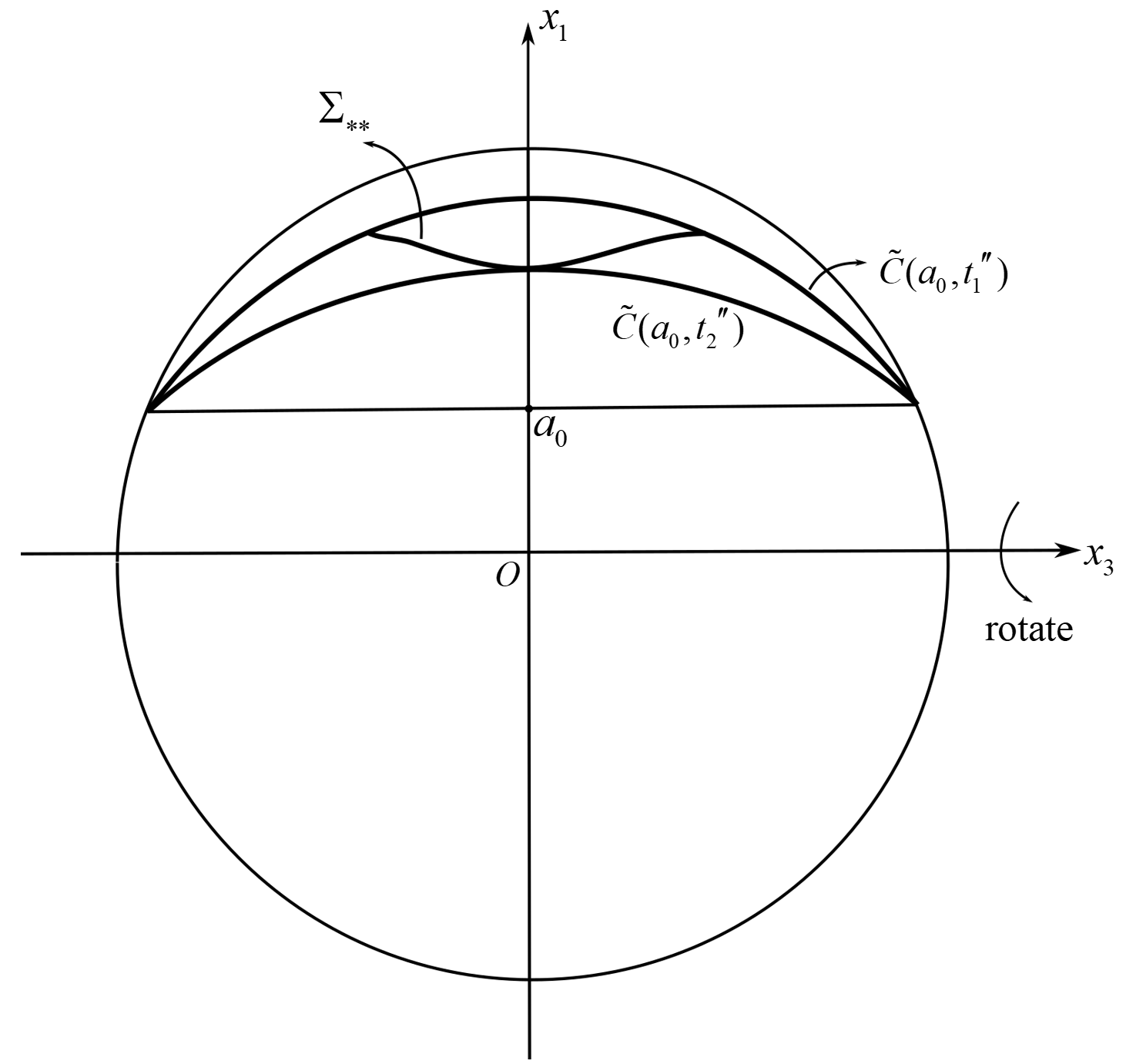}
   \caption*{Selection of $t_2''$ as the profile in $P_2$}
   \label{fig7}
 \end{figure}

In summary, we finish the proof.
\end{proof}

\begin{remark}
From those two theorems above, it can be noticed that when $a = a_0$, it has local $H^+$ rigidity, but actually $a_0$ does not have local $H^-$ rigidity. The reason is that $H^{a_0}(t)$ has a maximum when $t = 1$ which neither increase nor decrease. We will discuss it more carefully in the next part.
\end{remark}

\section{The round-corner lemma}
To show the non-rigidity part in our theorems, the basic idea is to gluing certain pieces of CMC surfaces with desired mean curvature functions and smoothing them at the intersection points(lines).


\begin{theorem}\label{round-corner-lemma}[The round-corner lemma]
Denote $\pmb{r}_1(s), \pmb{r}_2(s)$ as two regular parameter curves in $P_2$, with arc-length parameter $s$, and they have no intersection with themselves in their domain of definition. Suppose they transversely intersect at $p = \pmb{r}_1(0) = \pmb{r}_2(0)$ (which is assumed to be the only intersect point), and they are at the same side of $x_1$ axis, namely, $x_1 >0$. Given $\pmb{T}_i = \pmb{r}'_i / ||\pmb{r}'_i||$ the unit tangent vector field, and $\pmb{n}_i(i=1,2)$ the related left hand unit normal field of $\pmb{r}_i$ in $P_2$. Might as well, assume $\pmb{T}_2 \cdot \pmb{n}_{1} > 0$, which means $\pmb{r}_1$ should turn right to turn to $\pmb{r}_2$ at $s = 0$.\\
In $\R^3$, rotate each $\pmb{r}_i$ around $x_3$ axis to create the surface $R_i$, and $\pmb{n}_i$ naturally generates a normal field of $R_i$, still denoted as $\pmb{n}_i$. Define $H_i(s)$ the mean curvature of $C_i$ at $\pmb{r}_i(s) \in R_i$, from the normal $\pmb{n}_i$.\\
For each $ \{ \pmb{r}_i \} (i=1,2)$ that suit all the definitions and requirements above, and for $\forall \delta > 0$, there exists a round corner $\pmb{r}$ with a positive $0 < \epsilon \ll 1$, such that:
\begin{enumerate}
  \item $\pmb{r}(s)$ is a smooth regular parameter curve in $P_2$(but now $s$ may not be the length of curve), satisfying
\begin{align*}
\pmb{r}(s) =
\begin{cases}
\pmb{r}_1(s)& \text{$s \le - \epsilon$}\\
\pmb{r}_2(s)& \text{$s \ge \epsilon$}
\end{cases}
\end{align*}
but we do not request $s$ as the length of curve when $|s| \le \epsilon$.
  \item When $|s| \le \epsilon$, $\pmb{r}$ does not self intersect or intersect with other parts of itself in $B(p, \delta)$, and $d(\pmb{r}(s), p) < \delta$.
  \item Given $\pmb{T} = \pmb{r}' / ||\pmb{r}'||$ the unit tangent vector field, and it is apparent that its related left hand unit normal field, denoted as $\pmb{n}$, coinsides with $\pmb{n}_1$ or $\pmb{n}_2$ when $|s| > \epsilon$. Also define $R$ the surface of revolution of $\pmb{r}$ with the normal $\pmb{n}$ similar with above, and $H(s)$ the mean curvature of $R$ at $\pmb{r}(s)$. We have
\begin{align*}
\begin{cases}
H(s) \ge H_1(s)& \text{$ - \epsilon < s < 0$}\\
H(s) \ge H_2(s)& \text{$0 < s < \epsilon$}
\end{cases}
\end{align*}
\end{enumerate}
\end{theorem}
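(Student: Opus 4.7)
For a surface of revolution with profile $(x_3(s),x_1(s))$ in $P_2$ and unit normal $\pmb{n}$, the mean curvature splits as $H(s)=\tfrac{1}{2}(\kappa_{\mathrm{prof}}(s)+\kappa_{\mathrm{lat}}(s))$, where $\kappa_{\mathrm{prof}}$ is the signed planar curvature of the profile with respect to $\pmb{n}$ and $\kappa_{\mathrm{lat}}=\pm(\pmb{n}\cdot\vec{e}_{x_1})/x_1$. Because $x_1(p)>0$, $|\kappa_{\mathrm{lat}}|$ along any curve in $B(p,\delta)$ and the reference functions $|H_1|,|H_2|$ near $s=0$ are all uniformly bounded. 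The only way to push $H$ above these fixed bounds is to make $\kappa_{\mathrm{prof}}$ very large and positive; the orientation hypothesis $\pmb{T}_2\cdot\pmb{n}_1>0$ places both $\pmb{n}_i|_{s=0}$ on the same side of the corner and thus pins down the direction in which the bulge must curve.

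\textbf{Construction.} Place Cartesian coordinates $(u,v)$ at $p$ with $u$-axis along the bisector of $\pmb{T}_1(0),\pmb{T}_2(0)$, so each curve becomes a local graph $v=\phi_i(u)$ with $\phi_i(0)=0$, $\phi_1'(0)=-\tan\alpha$, $\phi_2'(0)=+\tan\alpha$ for some $\alpha\in(0,\pi/2)$, and both normals point into $\{v>0\}$. Fix a smooth monotone cutoff $\chi:\R\to[0,1]$ with $\chi\equiv0$ on $(-\infty,-1]$ and $\chi\equiv1$ on $[1,\infty)$, and for $0<\epsilon\ll\delta$ take the round-corner $\pmb{r}$ to be the graph of
\[
\phi_\epsilon(u) := \bigl(1-\chi(u/\epsilon)\bigr)\phi_1(u) + \chi(u/\epsilon)\phi_2(u),
\]
reparametrized by arc length outside $|u|\le\epsilon$. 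Flatness of $\chi$ to all orders at $\pm 1$ makes $\pmb{r}$ smooth and equal to $\pmb{r}_i$ outside $|s|\le\epsilon$, and smallness of $\epsilon$ guarantees containment in $B(p,\delta)$. A direct differentiation using $\phi_i(u)=\phi_i'(0)u+O(u^2)$ yields
\[
\phi_\epsilon''(u) = \frac{2\tan\alpha}{\epsilon}\bigl[2\chi'(u/\epsilon) + (u/\epsilon)\chi''(u/\epsilon)\bigr] + O(1),
\]
so on the sub-interval where the bracketed coefficient exceeds some $c_0>0$ the signed planar curvature $\kappa_\epsilon=\phi_\epsilon''/(1+(\phi_\epsilon')^2)^{3/2}\ge c/\epsilon$ dominates $|H_i|+|\kappa_{\mathrm{lat}}|$ for $\epsilon$ small, yielding $H\ge H_i$ there.

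\textbf{Main obstacle.} The delicate step is propagating $H\ge H_i$ into the thin collar near $u=\pm\epsilon$ where $\chi',\chi''$ decay to zero (by $C^\infty$-flatness of $\chi$ at $\pm 1$) and the $O(1/\epsilon)$ bound on $\kappa_\epsilon$ is lost; near the seam the bracketed coefficient above may even change sign. Two observations close the gap. A direct check gives $\phi_\epsilon<\phi_i$ on the respective half of $(-\epsilon,\epsilon)$, so the round-corner graph is a $-v$-ward perturbation of $\phi_i$; because $\pmb{n}$ points into $\{v>0\}$, a first-order expansion of the mean-curvature functional in this perturbation contributes a non-negative correction to $\kappa_{\mathrm{prof}}$ on the collar, giving $H\ge H_i$ there by continuity. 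If the first-order argument falters in a yet smaller sub-collar, one superposes a convex bump $+h\psi(u/\epsilon)$ with $\epsilon^2\ll h\ll\epsilon$ and $\psi\in C_c^\infty$ supported strictly inside $(-1,1)$ so that $\psi''>0$ on the sub-collar; this restores a strictly positive curvature excess without breaking the seam matching. Combined with $H(\pm\epsilon)=H_i(\pm\epsilon)$ from exact smooth matching at the seams, this completes the lemma.
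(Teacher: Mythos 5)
Your construction has a fatal problem exactly at the place you flag as the ``main obstacle,'' and neither of your two patches closes it. The convex combination $\phi_\epsilon=(1-\chi(u/\epsilon))\phi_1+\chi(u/\epsilon)\phi_2$ lies (weakly) \emph{below} the corner path $\max(\phi_1,\phi_2)$ on $(-\epsilon,\epsilon)$, touching it only at $p$: it undercuts the corner on the side \emph{away} from which $\pmb{n}$ points. Consequently, on the half where the transition happens (say $(0,\epsilon)$), the function $h=\phi_\epsilon-\phi_2$ satisfies $h\le 0$, $h\equiv 0$ for $u\ge\epsilon$ with flat vanishing, and $|h(u_0)|\sim c\,\epsilon$ at some $u_0$ with $\epsilon-u_0\sim\epsilon$. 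Taylor's formula $h(u_0)=\int_{u_0}^{\epsilon}(s-u_0)h''(s)\,ds$ then forces $h''\le -c'/\epsilon$ somewhere in the collar, i.e.\ $\kappa_{\mathrm{prof}}$ of $\phi_\epsilon$ drops \emph{below} that of $\phi_2$ by an amount of order $1/\epsilon$, while the lateral term stays bounded; so $H<H_2$ there, and the violation blows up as $\epsilon\to 0$. You can see the same thing in your own expansion: near $u/\epsilon=\pm 1$ the bracket $2\chi'+(u/\epsilon)\chi''$ is negative (the $\chi''$ term dominates for any flat cutoff), so the $O(1/\epsilon)$ term works against you precisely where you need $H\ge H_i$. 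Your first patch is not a valid argument: the first variation of mean curvature under a normal graph perturbation is governed by a Jacobi-type operator involving $h''$, not by the sign of $h$, and in any case the deficit here is $O(1/\epsilon)$, not a first-order effect. Your second patch cannot help either: a bump $h\psi(u/\epsilon)$ with $h\ll\epsilon$ contributes curvature only $O(h/\epsilon^2)=o(1/\epsilon)$, too small to offset the deficit, while taking $h\sim\epsilon$ makes $\psi$'s own concave shoulders create new $O(1/\epsilon)$ deficits, and no bump supported strictly inside $(-1,1)$ reaches the deficit region, which sits arbitrarily close to the seam.

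The underlying point, and the way the paper avoids it, is that the rounded profile must stay on the side into which $\pmb{n}$ points (weakly above the corner path, like a fillet inscribed in the wedge), which is incompatible with linear interpolation of the two graphs. The paper therefore modifies the \emph{curvature} rather than the graph: it defines two half-polished curves by prescribing curvature $\kappa_i+(K-\kappa_i)\mu_\lambda$ (a pointwise nonnegative increment, via the fundamental theorem of plane curves), joins them by an arc of a common circle of radius $1/K$ obtained by intersecting the loci of centers (Lemma~\ref{intersect}), controls self-intersection and the position of the seams by angle estimates (Lemma~\ref{angle}), and then uses the comparison criterion of Lemma~\ref{hikaku}: the change in the rotational term $\cos\theta/d$ is bounded by integrals of the curvature increment, hence carries the same factor $\mu_\lambda$ and is dominated by the increment itself once $\lambda<d_0/4$ and $K$ is large with $\lambda K$ small. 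That is what makes the inequality $H\ge H_i$ hold all the way up to the seams, which is exactly the step your graph-interpolation cannot deliver. To salvage your approach you would have to replace the interpolation by a construction that keeps $\phi_\epsilon\ge\max(\phi_1,\phi_2)$ with $\phi_\epsilon''$ controlled from below near the seams, which in effect reproduces the paper's curvature-prescription argument.
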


 \begin{figure}[!h]
   \setlength{\belowcaptionskip}{0.1cm}
   \centering
   \includegraphics[width=0.55\textwidth]{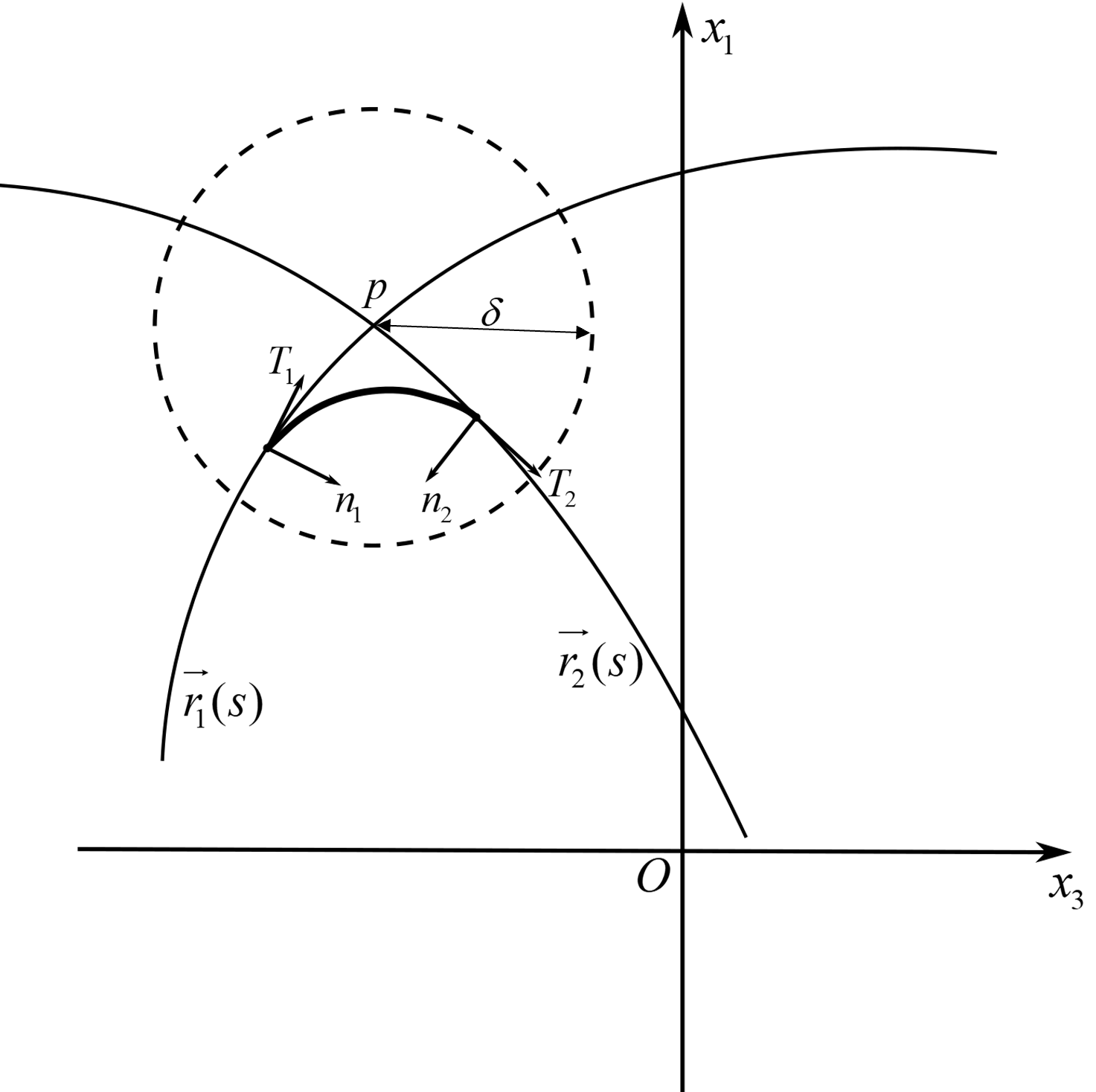}
	\caption*{The round-corner lemma}
   \label{fig8}
 \end{figure}

\begin{remark}\label{-ori}
It is apparent that if we first assume $\pmb{T}_2 \cdot \pmb{n}_{1} < 0$ rather than that in the lemma, then the lemma still keep true, with the change of requirement of the mean curvature, i.e.
\begin{align*}
\begin{cases}
H(s) \le H_1(s)& \text{$ - \epsilon < s < 0$}\\
H(s) \le H_2(s)& \text{$0 < s < \epsilon$}
\end{cases}
\end{align*}
\end{remark}

For the proof of the round corner lemma, we introduce three basic lemmas.

\begin{lemma}\label{intersect}
For $\forall D > 0, \exists d>0$, such that every two continuous curves $\pmb{l}_1(s),\pmb{l}_2(s) \subset \R^2$ will intersect at some $p^* \in B(p,D)$, if they always satisfy $$|| \pmb{l}_i(s) - \pmb{r}_i(s) || < d \quad i \in \{ 1,2 \}$$
\end{lemma}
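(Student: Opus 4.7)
The plan is to recast the existence of an intersection as a zero-finding problem for a single continuous map from a square in $\R^2$ into $\R^2$, and exploit the transversality of $\pmb{r}_1,\pmb{r}_2$ at $p$ through the $C^0$-stability of a non-degenerate zero (a winding-number / Brouwer degree argument).

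First I would pick $a>0$ small enough that $\pmb{r}_1([-a,a])$ and $\pmb{r}_2([-a,a])$ both lie in $B(p,D/2)$, and define
\begin{equation*}
G_0:[-a,a]^2\longrightarrow\R^2,\qquad G_0(s,t)=\pmb{r}_1(s)-\pmb{r}_2(t).
\end{equation*}
Since $\pmb{r}_1,\pmb{r}_2$ are regular and have no self-intersection, and since $p=\pmb{r}_1(0)=\pmb{r}_2(0)$ is their only common point, the only zero of $G_0$ in $[-a,a]^2$ (in fact in the whole rectangle of parameters) is $(0,0)$. The Jacobian of $G_0$ at $(0,0)$ has columns $\pmb{r}_1'(0)$ and $-\pmb{r}_2'(0)$, which are linearly independent by the transversality hypothesis. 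Hence $(0,0)$ is a non-degenerate zero, and the restriction $G_0|_{\partial[-a,a]^2}\colon\partial[-a,a]^2\to\R^2\setminus\{0\}$ has winding number $\pm 1$ about the origin.

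Next, by compactness, $\epsilon_0:=\min_{\partial[-a,a]^2}|G_0|>0$. I would then set $d:=\min\{\epsilon_0/3,\,D/2\}$. For any two continuous curves $\pmb{l}_1,\pmb{l}_2$ with $\|\pmb{l}_i-\pmb{r}_i\|<d$, the perturbed map $G(s,t):=\pmb{l}_1(s)-\pmb{l}_2(t)$ satisfies
\begin{equation*}
|G(s,t)-G_0(s,t)|\le |\pmb{l}_1(s)-\pmb{r}_1(s)|+|\pmb{l}_2(t)-\pmb{r}_2(t)|<2d\le \tfrac{2}{3}\epsilon_0,
\end{equation*}
so $G$ is nowhere zero on $\partial[-a,a]^2$, and the straight-line homotopy $(1-\tau)G_0+\tau G$ avoids $0$ on that boundary. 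By homotopy invariance, the winding number of $G|_{\partial[-a,a]^2}$ about $0$ is again $\pm 1$, so $G$ must vanish at some interior point $(s^*,t^*)$. Setting $p^*:=\pmb{l}_1(s^*)=\pmb{l}_2(t^*)$ gives an intersection of the two perturbed curves, and the triangle inequality yields $|p^*-p|\le|\pmb{l}_1(s^*)-\pmb{r}_1(s^*)|+|\pmb{r}_1(s^*)-p|<d+D/2<D$, placing $p^*$ in $B(p,D)$.

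The only thing requiring care is the order of choices: $a$ must be chosen first (small enough that both arcs lie in $B(p,D/2)$ and that $G_0$ is well defined on $[-a,a]^2$), and only then is $d$ selected in terms of the boundary lower bound $\epsilon_0=\epsilon_0(a)$. The step that does the real work is the topological invariance of the winding number under small perturbations; this is the standard $C^0$-stability of a transverse intersection, and no genuine obstacle arises beyond arranging the scales correctly.
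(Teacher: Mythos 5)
Your argument is correct, but it follows a genuinely different route from the paper's. The paper proves the lemma by an interlacing argument: it takes a small disk $B$ containing $p$ and chooses $d$ so small that suitable portions $\pmb{l}_1(s),\ s_1<s<s_2$, and $\pmb{l}_2(s),\ s_3<s<s_4$, lie in $B$ with the four endpoints $\pmb{l}_1(s_1),\pmb{l}_2(s_3),\pmb{l}_1(s_2),\pmb{l}_2(s_4)$ occurring alternately on $\partial B$; two arcs of a disk whose endpoints interlace on the boundary must cross, by the Jordan curve theorem. You instead encode the intersection as a zero of $G(s,t)=\pmb{l}_1(s)-\pmb{l}_2(t)$ and invoke the $C^0$-stability of the Brouwer degree: transversality makes $(0,0)$ a non-degenerate zero of $G_0$, and it is the unique zero on $[-a,a]^2$ because $p$ is the only crossing and each $\pmb{r}_i$ is injective, so the boundary winding number is $\pm1$ and survives any perturbation below $\tfrac{1}{3}\min_{\partial[-a,a]^2}|G_0|$; your order of choices ($a$ first, then $d=\min\{\epsilon_0/3,D/2\}$) is the right one, and the localization $p^*\in B(p,D)$ follows as you say. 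The trade-off is mild: your route uses the $C^1$ regularity of $\pmb{r}_1,\pmb{r}_2$ to get an invertible Jacobian and hence local index $\pm1$, which is certainly available here (the curves are regular and even carry curvature data in the round-corner construction), and in exchange it pins down $d$ quantitatively and avoids the step the paper leaves at the level of ``it is apparent'' (why the perturbed endpoints remain interlaced on $\partial B$); the paper's argument, conversely, needs only continuity and topological transversality and is shorter. Either proof serves the subsequent application to the centers $O_i(s,\lambda,K)$ equally well.
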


\begin{proof}
We can find a disk $B$ that contains $p$, and make $d$ small enough that we can assume that $\pmb{l}_1(s) (s_1<s<s_2)$ and $\pmb{l}_2(s) (s_3<s<s_4)$ are also contained in $B$, with $\pmb{l}_1(s_1),\pmb{l}_2(s_3),\pmb{l}_1(s_2),\pmb{l}_2(s_4)$ arranged on its boundary in order. Then it is apparent that these two curve segments will intersect since Jordan Curve Theorem.
\end{proof}

\begin{lemma}\label{hikaku}
Using the symbols in round corner lemma, denote $\kappa_i$ as the curvature of $\pmb{r}_i$ about the normal $\pmb{n}_i$; $y_i$ as the ordinate of $\pmb{r}_i$; and $\theta_i = \arg \pmb{T}_i$. Suppose that $d_i(s) >0 , \forall s$, then there is always  $H_1(s) \ge H_2(s)$ if
\begin{align*}
\kappa_1(s)-\kappa_2(s) \ge \frac{|d_1(s)-d_2(s)|}{\min \{ d_1(s),d_2(s) \} ^2}+ \frac{|\theta_1(s)-\theta_2(s)|}{\min \{ d_1(s),d_2(s) \} }.
\end{align*}
\end{lemma}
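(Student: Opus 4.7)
\medskip

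\noindent\textbf{Proof proposal.} The plan is to use the standard decomposition of the mean curvature of a surface of revolution into a meridional part (the curvature of the profile curve) and a parallel part (a term built from the distance to the axis and the angle of the tangent with that axis), and then to estimate the difference of the parallel parts by the two error quantities $|y_1-y_2|$ and $|\theta_1-\theta_2|$ that appear on the right-hand side of the hypothesis.

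First I would record the explicit formula for $H_i(s)$ in the notation of the lemma. For a profile curve $\pmb{r}_i(s)=(x_3^{(i)}(s),y_i(s))$ parametrised by arc length, with tangent angle $\theta_i(s)=\arg\pmb{T}_i$ and left-hand normal $\pmb{n}_i$, the two principal curvatures of the associated rotational surface $R_i$ are $\kappa_i(s)=\theta_i'(s)$ along the meridian and $-\cos\theta_i(s)/y_i(s)$ along the parallel (the sign of the parallel term being determined by the radial component of $\pmb{n}_i$, which equals $\cos\theta_i$ for the left-hand normal). This gives
\begin{equation*}
2H_i(s)=\kappa_i(s)-\frac{\cos\theta_i(s)}{y_i(s)}.
\end{equation*}
Subtracting the two identities yields
\begin{equation*}
2\bigl(H_1(s)-H_2(s)\bigr)=\bigl(\kappa_1(s)-\kappa_2(s)\bigr)-\Bigl(\frac{\cos\theta_1(s)}{y_1(s)}-\frac{\cos\theta_2(s)}{y_2(s)}\Bigr),
\end{equation*}
so proving $H_1\ge H_2$ reduces to estimating the last bracket from above by the right-hand side of the hypothesis (with $d_i=y_i$).

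Next I would split the parallel difference by adding and subtracting a mixed term, for example $\cos\theta_2(s)/y_1(s)$, to obtain
\begin{equation*}
\frac{\cos\theta_1}{y_1}-\frac{\cos\theta_2}{y_2}=\frac{\cos\theta_1-\cos\theta_2}{y_1}+\cos\theta_2\Bigl(\frac{1}{y_1}-\frac{1}{y_2}\Bigr).
\end{equation*}
Using $|\cos\theta_1-\cos\theta_2|\le|\theta_1-\theta_2|$ (mean value theorem), $|\cos\theta_2|\le 1$, and $|1/y_1-1/y_2|=|y_1-y_2|/(y_1y_2)\le|y_1-y_2|/\min(y_1,y_2)^2$, the absolute value of the parallel difference is bounded by
\begin{equation*}
\frac{|\theta_1-\theta_2|}{\min(y_1,y_2)}+\frac{|y_1-y_2|}{\min(y_1,y_2)^2}.
\end{equation*}
Combining this with the displayed identity for $2(H_1-H_2)$ and the hypothesis of the lemma gives $H_1(s)-H_2(s)\ge 0$, which is the desired conclusion.

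I expect the only non-cosmetic point to be making sure the sign of the parallel term matches the convention of the paper (\emph{left-hand} normal in $P_2$, rotated to give $\pmb{n}_i$ on $R_i$, with curves lying in $\{y>0\}$); once this sign is pinned down by the sanity check on the unit semicircle (where $\theta_i=\pi$, $y_i=1$, $\kappa=1$ produce $H=1$), the rest is routine. No serious analytic obstacle arises.
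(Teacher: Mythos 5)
Your proposal is correct and takes essentially the same route as the paper: both write the mean curvature of the rotational surface as $2H_i(s)=\kappa_i(s)\pm\cos\theta_i(s)/d_i(s)$ and then bound the difference of the rotational terms by $\frac{|\theta_1-\theta_2|}{\min\{d_1,d_2\}}+\frac{|d_1-d_2|}{\min\{d_1,d_2\}^2}$ via the same add-and-subtract, mean-value, and $d_1d_2\ge\min\{d_1,d_2\}^2$ estimates. The paper states the formula with the opposite sign on the rotational term ($2H_i=\kappa_i+\cos\theta_i/d_i$), but since only the absolute value of the difference of these terms enters the estimate, this sign-convention discrepancy (which you correctly flag and resolve by the semicircle sanity check) does not affect the argument.
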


\begin{proof}
Only need to observe that
\begin{align*}
2H_i(s)=\kappa_i(s)+ \frac{\cos \theta_i(s)}{d_i(s)}
\end{align*}
So we can estimate that
\begin{align*}
|\frac{\cos \theta_1(s)}{d_1(s)}-\frac{\cos \theta_2(s)}{d_2(s)}| & \le \frac{|d_1(s) \cos \theta_2(s)-d_2(s) \cos \theta_1(s)|}{d_1(s) d_2(s)}\\
& \le \frac{|d_1(s) \cos \theta_2(s)-d_1(s) \cos \theta_1(s)|+|d_1(s) \cos \theta_1(s)-d_2(s) \cos \theta_1(s)|}{d_1(s) d_2(s)}\\
& \le \frac{|\theta_1(s) - \theta_2(s)|}{d_2(s)} + \frac{|d_1(s) - d_2(s)|}{d_1(s) d_2(s)}\\
& \le \frac{|\theta_1(s)-\theta_2(s)|}{\min \{ d_1(s),d_2(s) \} } + \frac{|d_1(s)-d_2(s)|}{\min \{ d_1(s),d_2(s) \} ^2}.
\end{align*}
Then the lemma is trivial by this estimation and the given condition.
\end{proof}

\begin{lemma}\label{angle}
Consider any minor arc of circle $\overset{\frown}{A_1A_2} \subset \R^2$, whose length is denoted $L$. Also, request the central angle of $\overset{\frown}{A_1A_2}$ are all in $[\alpha_1, \alpha_2] \subset (0,\pi)$.

If $B_1,B_2$ are any two points with $d(A_i,B_i) \le \rho$, then we have $$\lim_{\frac{\rho}{L} \to 0^+} (\arg \overrightarrow{B_1B_2}  - \arg \overrightarrow{A_1A_2}) = 0 .$$
\end{lemma}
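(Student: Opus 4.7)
The plan is to reduce everything to a direct triangle estimate after first obtaining a uniform lower bound on the chord length $|\overrightarrow{A_1A_2}|$ in terms of $L$. If the arc has radius $R$ and central angle $\alpha \in [\alpha_1,\alpha_2] \subset (0,\pi)$, then $L = R\alpha$ and
\[
|\overrightarrow{A_1A_2}| = 2R\sin(\alpha/2) = \frac{2\sin(\alpha/2)}{\alpha}\,L.
\]
Because the map $\alpha \mapsto 2\sin(\alpha/2)/\alpha$ is continuous and strictly positive on $[\alpha_1,\alpha_2]$, it attains a minimum $c=c(\alpha_1,\alpha_2)>0$ there, so $|\overrightarrow{A_1A_2}| \ge cL$ uniformly in the arc under consideration.

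Next I would write
\[
\overrightarrow{B_1B_2} = \overrightarrow{A_1A_2} + \overrightarrow{B_2 A_2}\;\text{reversed} + \overrightarrow{A_1B_1}\;\text{reversed},
\]
i.e.\ $\overrightarrow{B_1B_2} - \overrightarrow{A_1A_2} = \overrightarrow{A_2B_2} - \overrightarrow{A_1B_1}$, whose magnitude is at most $2\rho$ by the triangle inequality and the assumption $d(A_i,B_i)\le\rho$. In particular, whenever $\rho/L < c/4$ we have $|\overrightarrow{B_1B_2}| \ge cL - 2\rho \ge cL/2 > 0$, so the argument of $\overrightarrow{B_1B_2}$ is well defined.

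To control the angle I would use the elementary estimate: if $\vec u,\vec v\in\mathbb R^2$ satisfy $|\vec v-\vec u|\le \varepsilon |\vec u|$ with $\varepsilon<1$, then the angle between them is at most $\arcsin\varepsilon$. Applying this with $\vec u=\overrightarrow{A_1A_2}$, $\vec v=\overrightarrow{B_1B_2}$, and $\varepsilon = 2\rho/|\overrightarrow{A_1A_2}| \le 2\rho/(cL)$ gives
\[
\bigl|\arg \overrightarrow{B_1B_2} - \arg \overrightarrow{A_1A_2}\bigr| \;\le\; \arcsin\!\Bigl(\frac{2\rho}{cL}\Bigr),
\]
which tends to $0$ as $\rho/L \to 0^+$. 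Since $c$ depends only on $\alpha_1,\alpha_2$ (not on the particular arc or on $R$), this bound is uniform over the admissible family of arcs and proves the claim.

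The only mild subtlety is ensuring that $\arg\overrightarrow{B_1B_2}$ and $\arg\overrightarrow{A_1A_2}$ are being compared on the same branch; this is automatic once $\varepsilon<1$, because then $\vec v$ lies in the open half-plane whose inner normal is $\vec u$, so the shorter arc between their directions is what $\arcsin\varepsilon$ controls. I do not expect any real obstacle here — the entire argument is plane geometry once the chord-length lower bound $|\overrightarrow{A_1A_2}|\ge cL$ is in hand, and that lower bound is immediate from $\alpha_1>0$ and $\alpha_2<\pi$.
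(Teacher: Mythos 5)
Your proposal is correct and follows essentially the same route as the paper: a uniform lower bound on the chord $|\overrightarrow{A_1A_2}|$ in terms of $L$ coming from the central-angle restriction $[\alpha_1,\alpha_2]\subset(0,\pi)$, the bound $|\overrightarrow{B_1B_2}-\overrightarrow{A_1A_2}|\le 2\rho$, and then an elementary angle estimate (the paper invokes the Law of Cosines where you use the $\arcsin$ bound, which is the same computation). Your version is slightly more explicit about uniformity and the branch of $\arg$, but there is no substantive difference.
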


\begin{proof}
Without loss of generality, we can assume $L = 1$. Then we only need to prove $$\lim_{\rho \to 0^+} (\arg \overrightarrow{B_1B_2} -\arg \overrightarrow{A_1A_2}) = 0.$$
Define $\theta_{12}$ as the central angle of $\overset{\frown}{A_1A_2}$, then it is trivial to verify that $$|A_1A_2| = \frac{2}{\theta_{12}}\sin\frac{\theta_{12}}{2}.$$
Since $\theta_{12} \in [\alpha_1, \alpha_2]$, it is apparent that $$|A_1A_2| \ge \frac{2}{\alpha_2}\sin \frac{\alpha_1}{2} > 0.$$
Also, it is apparent that $$|\overrightarrow{B_1B_2} - \overrightarrow{A_1A_2}| \le |\overrightarrow{A_1B_1}| + |\overrightarrow{A_2B_2}| \le 2\rho.$$
Hence, this lemma is trivial from the Law of Cosines, when $\rho \to 0^+$.
\end{proof}

Now we will prove the round-corner lemma. First we define $s(x)$ as
\begin{align*}
s(x) =
\begin{cases}
0& \text{ $ x \le 0 $ } \\
e^{-\frac{1}{x}}& \text{ $ x > 0 $ }
\end{cases}
\end{align*}
Then, define $\mu_{\lambda}(x)$ for $\lambda > 0$ as $$ \mu_{\lambda}(x) = \frac{s(x)}{s(x) + s(\lambda-x)}$$
It can be noticed that $\mu_\lambda \in C^\infty (\R)$, $\mu_\lambda$ increases, and $0 \le \mu_\lambda \le 1$.

Without loss of generality, we can assume that $\delta  < y_p$, so we can find $d_0 \in (0, y_p - \delta)$ and $\epsilon_1 > 0$, such that $\pmb{r}_1(s)$ and $\pmb{r}_2(s)$ $(|s| \le \epsilon_1)$ lie in $B(p,\delta)$ and intersect at only one point $p$.

Consider $\pmb{T}_0 = \pmb{T_1} + \pmb{T}_2$ and denote $\theta = \arg \pmb{T_0}(0)$. Since $\pmb{T}_1$ should turn right to $\pmb{T}_2$, define $$\theta^* = \arg \pmb{T}_1(0) - \theta \in (0, \frac{\pi}{2})$$
and select one
\begin{align}\label{beta}
\beta \in (0, \min \{ \frac{\theta^*}{4}, \frac{\pi}{4} - \frac{\theta^*}{2} \} ).
\end{align}
Then, it is trivial from the continuity of $\arg \pmb{T}_i$ that we can select $\epsilon_2 \in (0, \epsilon_1)$ and $\delta' < \delta / 2$, such that $d(\pmb{r}_i(s),p) < \delta'$, with
\begin{align}\label{beta1}
\arg \pmb{T}_1 &\in [ \theta+ \theta^* - \beta,\theta+\theta^* + \beta ] \quad \forall s \in [-\epsilon_2,0],
\end{align}
\begin{align}\label{beta2}
\arg \pmb{T}_2 &\in [ \theta-\theta^* - \beta,\theta-\theta^* + \beta ] \quad \forall s \in [0,\epsilon_2].
\end{align}
Hence, we can also find $\alpha \in (0, \pi / 4)$, such that
\begin{align*}
\arg \pmb{T}_1 \in [ \theta+\alpha,\theta+\frac{\pi}{2}-\alpha ],\\
\arg \pmb{T}_2 \in [ \theta-\frac{\pi}{2}+\alpha,\theta-\alpha ].
\end{align*}
Fix $\theta, \alpha$ and $\beta$.

 \begin{figure}[!h]
   \setlength{\belowcaptionskip}{0.1cm}
   \centering
   \includegraphics[width=0.4\textwidth]{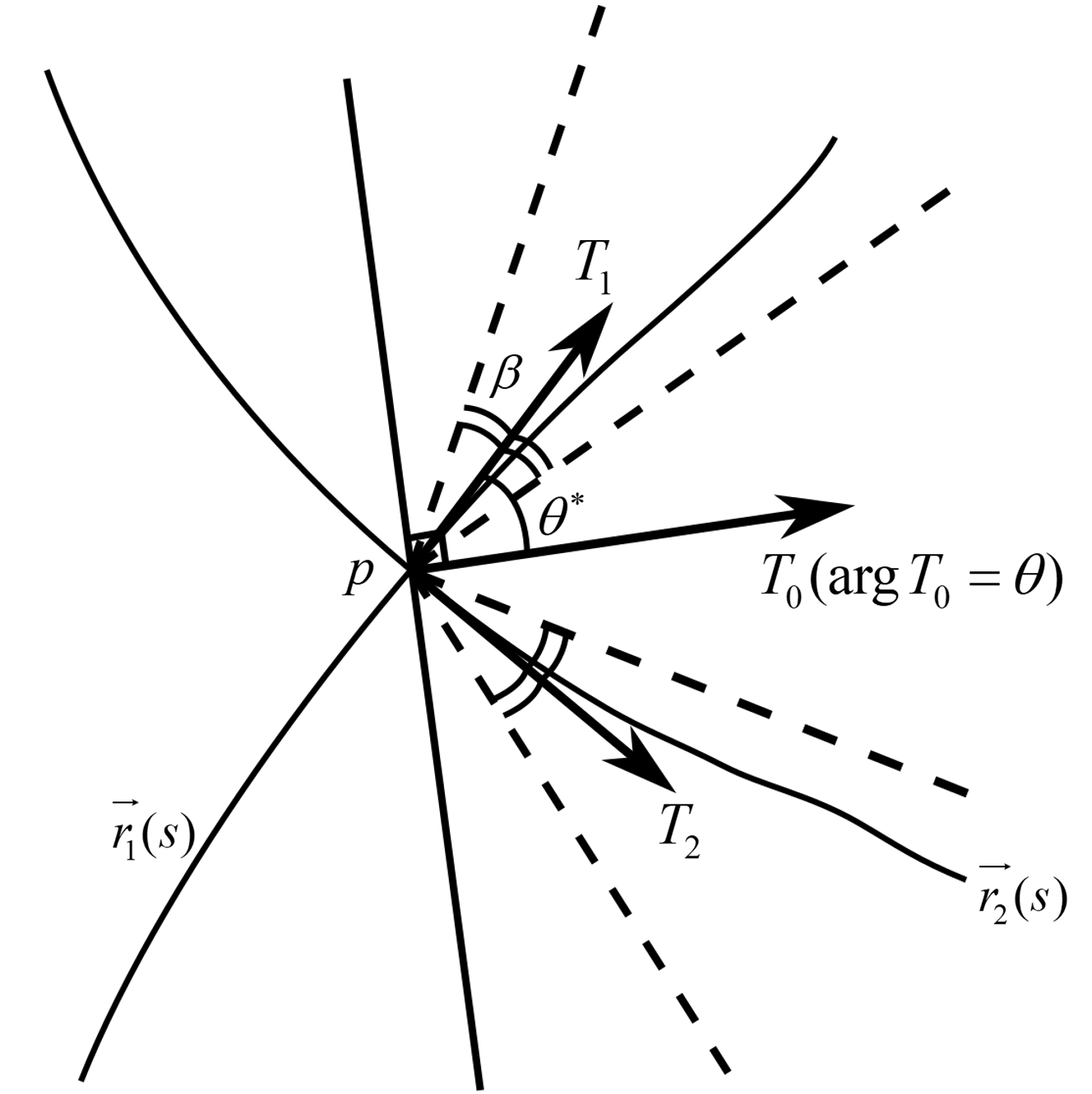}
	\caption*{Selection of $\beta$, the four angles marking two arcs are all $\beta$}
   \label{fig9}
 \end{figure}

In order to create the smooth polishing curve, first we will half-polish them by using two half-polishing curves, which will be expected to get closer to the same arc of circle smoothly. First, by using the Fundamental Theorem of Curve Theory, we generally define the smooth polishing curve $\pmb{P}_1(s_0,\lambda ,K, s)$, with $s$ as the length of curve, $s_0,\lambda,K$ as the undetermined parameters, requesting $$\pmb{P}_1(s_0,\lambda,K,s_0) = \pmb{r}_1(s_0).$$
And we set its curvature as
$$\widetilde{\kappa}_1(s) = \kappa_1(s)+(K-\kappa_1(s))\mu_\lambda(s-s_0)$$
So it is apparent that $$\pmb{P}_1(s_0,\lambda,K,s)=\pmb{r}_1(s) \quad \forall s \le s_0$$
and when $s \ge s_0+\lambda$, $\pmb{P}_1$ becomes a circle. We use $O_1(s_0,\lambda,K)$ to represent its center.

Conversely, define $\pmb{P}_2(s_0,\lambda ,K, s)$, with $s$ as the length of curve, $s_0,\lambda,K$ as the undetermined parameters same as $\pmb{P}_1$, requesting $$\pmb{P}_2(s_0,\lambda,K,s_0) = \pmb{r}_2(s_0).$$
Set its curvature as 
$$\widetilde{\kappa}_2(s) = \kappa_2(s)+(K-\kappa_2(s))\mu_\lambda(s_0-s).$$
So we have $$\pmb{P}_2(s_0,\lambda,K,s)=\pmb{r}_1(s) \quad \forall s \ge s_0$$
and when $s \le s_0 - \lambda$, $\pmb{P}_2$ is a circle, whose center is denoted as $O_2(s_0,\lambda,K)$.\\

We will try to connect $\pmb{P}_1$ and $\pmb{P}_2$ by an arc. We should always request $\lambda,K$ to be positive. Then, when we fix $\lambda$ and $K$, we have two continuous curves $O_i(s,\lambda,K)(i=1,2)$, and it is apparent that $$||O_i(s) - \pmb{r}_i(s)|| \le  \lambda+\frac{1}{K}.$$

Using Lemma~\ref{intersect}, if $\lambda + 1/K$ is small enough, we can select the intersection point of $O_1, O_2$ as $O(\lambda, K)$, satisfying 
$$d(O(\lambda,K),p) \le \delta',$$
and select a corresponding $s_i(\lambda,K) \in [-\epsilon_2, \epsilon_2]$, such that $$O_i(s_i(\lambda,K),\lambda,K) = O(\lambda,K).$$
Since we have used $\lambda, K$ to give one $s_i$, the undetermined parameters left are only $\lambda$ and $K$. We define 
$$\phi_i(\lambda,K,s) = \arg \pmb{P}_i'(s_i,\lambda,K,s).$$
Since $\pmb{P}_1$ and $\pmb{P}_2$ have the same center as $O(\lambda, K)$, consider the arc of circle from $\pmb{P}_1(s_1+\lambda)$ to $\pmb{P}_2(s_2-\lambda)$(though we do not know whether it will cause self intersection right now), and we define $L$ as the length of this arc. It is apparent that 
$$L \le \frac{2\pi}{K}.$$
Then $\pmb{P}_1$, this connected arc, and $\pmb{P}_2$ will joint together into a round corner, defined as $\pmb{r}$.

After that, we will guarantee that this round corner will not intersect itself. For these requests, we need to estimate the variation of $\phi_i$.

If $K > \sup_{|s| \le \epsilon} |\kappa_1(s)|$, then for $\forall \lambda' \in(0, \lambda)$, we have
\begin{align}
| \phi_1(s_1) - \phi_1(s_1 + \lambda') | &\le \int_{s_1}^{s_1+\lambda'} |\kappa_1(s)+(K-\kappa_1(s))\mu_\lambda(s-s_1)|ds\\
& \le \int_{s_1}^{s_1+\lambda'} |\kappa_1(s)+(K-\kappa_1(s))\mu_\lambda(s-s_1)|ds \\
& \le \int_{s_1}^{s_1+\lambda'}Kds + \int_{s_1}^{s_1+\lambda'}|\kappa_1(s)|ds \\
& \le \int_{s_1}^{s_1+\lambda'}2Kds = 2K\lambda'. \label{2krou}
\end{align}
Similarly, we have
\begin{align}\label{2krou2}
|\phi_2(s_2 - \lambda') - \phi_2(s_2)| \le 2K\lambda' \quad \forall \lambda' \in(0, \lambda)
\end{align}
Hence, if $2\lambda K < \alpha$, then $\forall \lambda' \in(0, \lambda]$, we have

$$\phi_1(s_1 + \lambda') \in (\theta, \theta + \frac{\pi}{2}) \quad \phi_2(s_2 - \lambda') \in (\theta - \frac{\pi}{2},\theta).$$

Then consider their position on the circle. Since $\pmb{n}_i$ is left hand, when $\pmb{P}_i$ becomes the circle, they must rotate clockwise, hence, $\phi_1(s_1 + \lambda)$ will decrease to $\phi_2(s_2 - \lambda)$, showing that it is the minor arc.

 \begin{figure}[!h]
   \setlength{\belowcaptionskip}{0.1cm}
   \centering
   \includegraphics[width=0.4\textwidth]{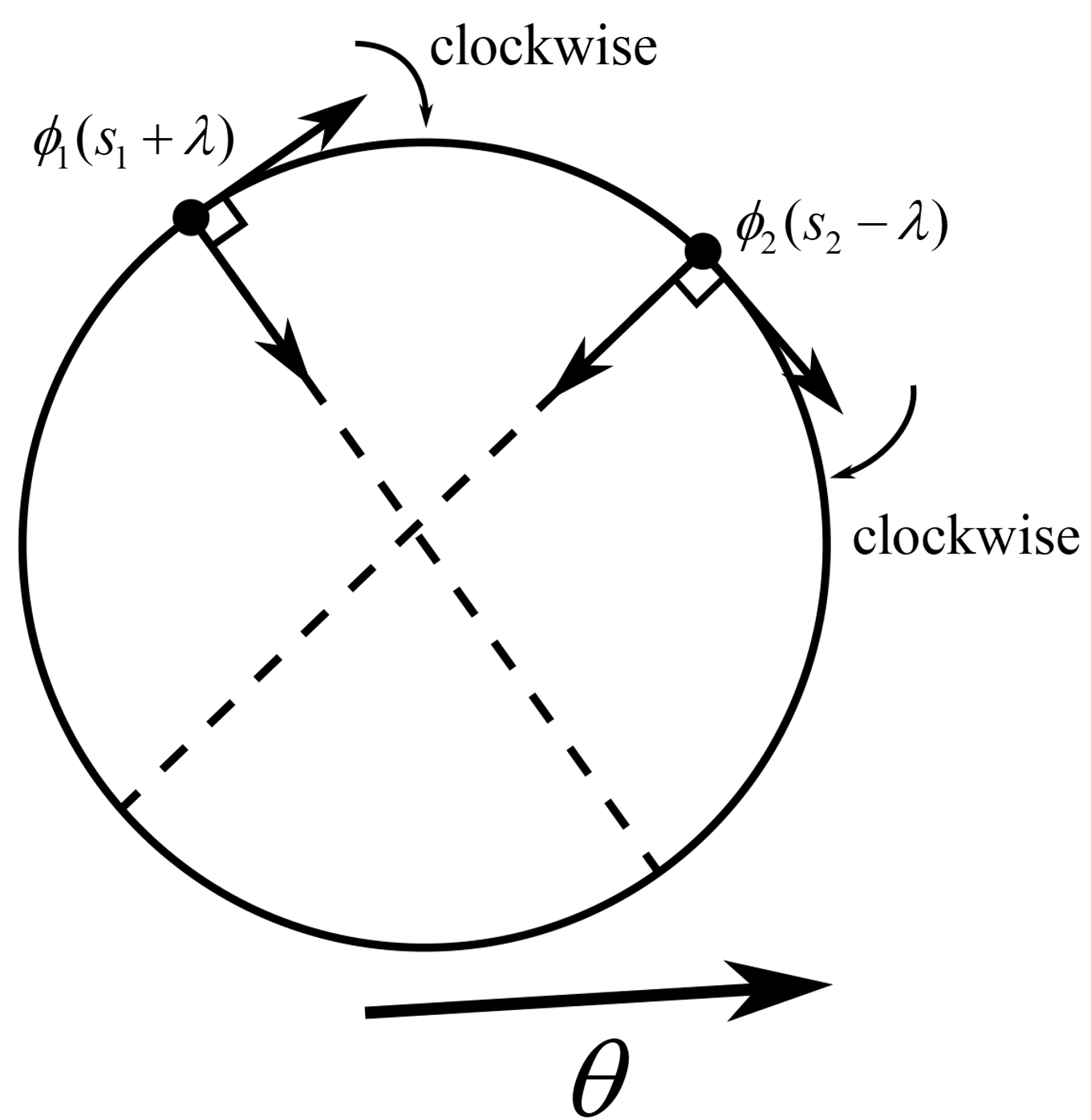}
	\caption*{$\phi_1(s_1 + \lambda)$ will decrease to $\phi_2(s_2 - \lambda)$, showing the corner must be a minor arc}
   \label{fig10}
 \end{figure}

Also, it is apparent that $$\pmb{r}' (s) \cdot \pmb{T}_0 (0) > 0 \quad \forall s \in (-\epsilon_2, \epsilon_2)$$
and this means that $\pmb{r}(s)$ has no self intersection.

We can also guarantee that $s_1(\lambda,K)\le 0$ and $s_2(\lambda,K) \ge 0$. Actually, $s_1$ and $s_2$ can not be $0$ simultaneously, since $\pmb{r}'(s) \cdot \pmb{T}_0(0) > 0$ and $\pmb{T}_2 \cdot \pmb{n}_1 > 0$. It is also impossible that $s_1 \ge 0,s_2 \le 0$ for the same reason.

For other conditions, we need a more accurate estimation of the angle.

If $2\lambda K < \beta$, then it is apparent from~\ref{beta1}, \ref{beta2}, ~\ref{2krou} and~\ref{2krou2} that $$|\phi_1(s_1+\lambda) -(\theta + \theta^*)| \le 2\beta \quad |\phi_2(s_2 - \lambda) -(\theta - \theta^*)| \le 2\beta.$$
Also, from the basic geometric feature of the arc, we have
\begin{align*}
\arg[\pmb{P}_2(s_2 - \lambda)-\pmb{P}_1(s_1 + \lambda)] =  \frac{\phi_2(s_2 - \lambda)+ \phi_1(s_1 + \lambda)}{2}.
\end{align*}
Hence, we can directly estimate that
\begin{align*}
|\arg[\pmb{P}_2(s_2 - \lambda)-\pmb{P}_1(s_1 + \lambda)] - \theta| \le 2\beta.
\end{align*}

After that, notice that
$$L = \frac{|\phi_2(s_2 - \lambda)- \phi_1(s_1 + \lambda))|}{K} \ge \frac{2\theta^* - 4\beta}{K}.$$
Define 
$$\rho_1 = d(\pmb{P}_1(s_1), \pmb{P}_1(s_1 + \lambda)) \quad \rho_2 = d(\pmb{P}_2(s_2), \pmb{P}_2(s_2 - \lambda)).$$
From~\ref{beta}, we have 
$$ \frac{L}{\rho_i} \ge \frac{2\theta^* - 4\beta}{\lambda K} \ge \frac{\theta^*}{\lambda K}.$$
Hence, if $\lambda K$ is small enough, from Lemma~\ref{angle}, we can assume that $|\arg[\pmb{P}_2(s_2)-\pmb{P}_1(s_1)]-\arg[\pmb{P}_2(s_2 - \lambda)-\pmb{P}_1(s_1 + \lambda)]|$ is small enough, such as 
$$|\arg[\pmb{P}_2(s_2)-\pmb{P}_1(s_1)] - \theta| \le 3\beta.$$

Therefore, from $3\beta < \theta^* - \beta$, it is impossible that $s_1,s_2 \ge 0$ or $s_1, s_2 \le 0$, because if not, since $s_1$ and $s_2$ can not be both equal to $0$, from trivial geometric fact(see from the figure below), there must be
$$|\arg[\pmb{P}_2(s_2)-\pmb{P}_1(s_1)] - \theta| > \theta^* - \beta,$$
which is a contradiction.

 \begin{figure}[!h]
   \setlength{\belowcaptionskip}{0.1cm}
   \centering
   \includegraphics[width=0.45\textwidth]{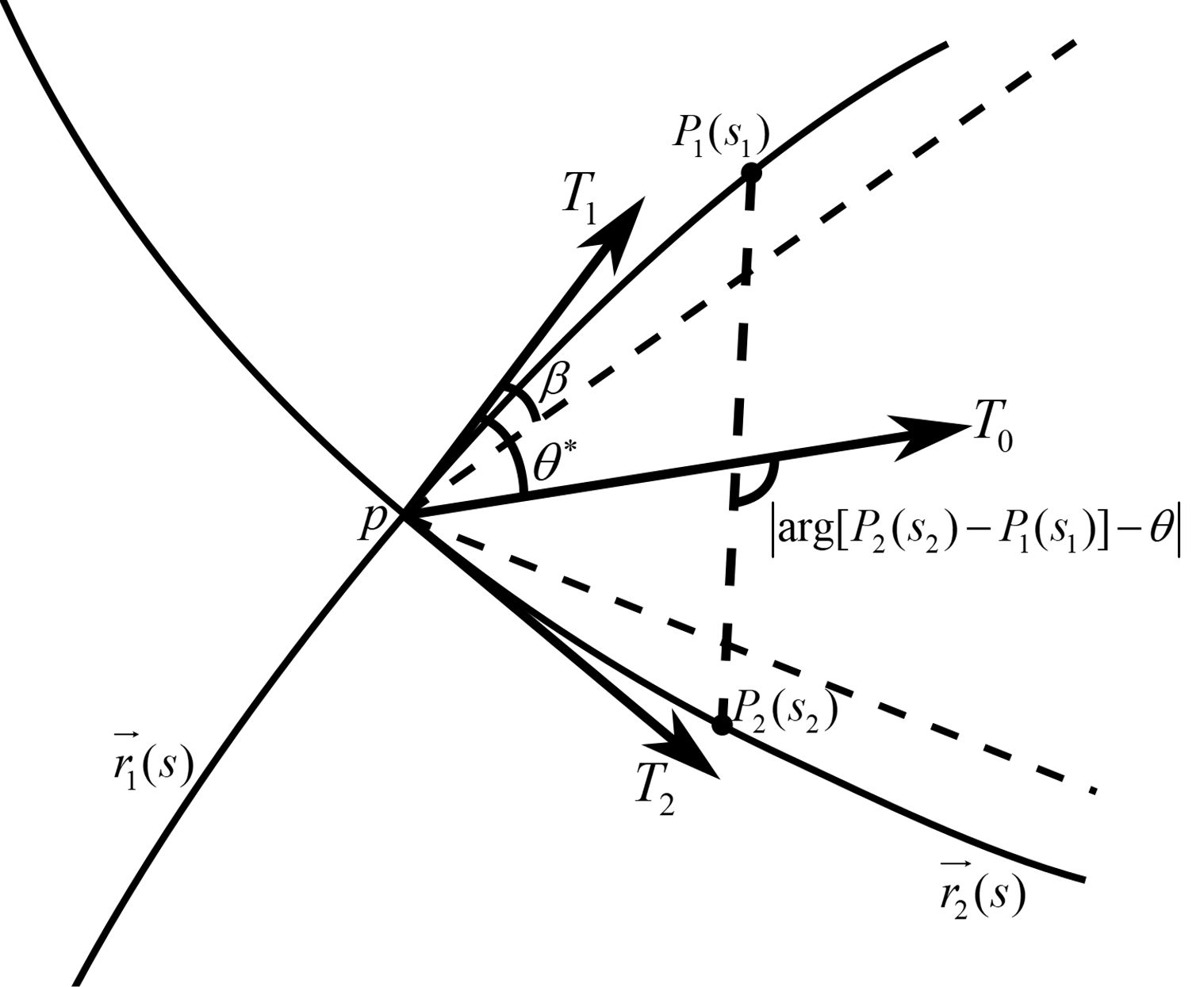}
	\caption*{Contradiction if $s_1, s_2 >0$, the other condition is similar}
   \label{fig11}
 \end{figure}

Finally, we try to control the mean curvature of the round corner. The arc part has mean curvature $$H(s) = \frac{1}{2} (K+\frac{\cos \theta(s)}{d(s)}) \ge \frac{1}{2}(K-\frac{1}{d(s)})$$
where $d(s)$ respects the distance from $\pmb{r}(s)$ to $x_3$ axis. So we only need
\begin{align*}
K> 2\sup_{|s| \le \epsilon, i = 1,2} |\kappa_i(s)|+\frac{1}{d_0}.
\end{align*}
As for the half-polishing part, according to Lemma~\ref{hikaku}, we only need to keep $$(K-\kappa_1(s))\mu_\lambda(s-s_1)\ge \frac{|d_1(s)-d_c(s)|}{(\min \{ d_1(s),d_c(s) \} )^2}+ \frac{|\theta_1(s)-\theta_c(s)|}{\min \{ d_1(s),d_c(s) \} }   (s_1 \le s \le s_1 + \lambda).$$
where $d_c$ represents the distance from $\pmb{P}_1(s)$ to $x_3$ axis, and $d_1$ from $\pmb{r}_1(s)$ to $x_3$ axis, and the similar request of $\pmb{P}_2$.

This time, we let $K$ be large enough such that $\exists a, b > 0, a < b < 2a$, such that
$$K-\kappa_i(s) \in [a,b] \quad \forall s \in [-\epsilon,\epsilon], i=1,2.$$
Then notice that

\begin{align*}
|( \theta_1(s)-\theta_c(s))| &\le \int_{s_1}^s (K-\kappa_1(s))\mu_\lambda (s-s_1) ds \\
& \le \int_{s_1}^s b\mu_\lambda (s-s_1) ds \le b(s-s_1)\mu_\lambda (s-s_1).
\end{align*}

\begin{align*}
|d_1(s)-d_c(s)|&=|\int_{s_1}^s (\cos \theta_1(s) - \cos \theta_c(s))ds|\\
& \le \int_{s_1}^s |( \theta_1(s)-\theta_c(s))|ds \\
&\le \int_{s_1}^s b(s-s_1)\mu_\lambda (s-s_1)ds\\
&\le b\mu_\lambda (s-s_1)\int_{s_1}^s (s-s_1)ds = \frac{b}{2} (s-s_1)^2 \mu_\lambda (s-s_1).
\end{align*}

So we only need $$ a\mu_\lambda (s-s_1) \ge \frac{b(s-s_1)^2 \mu_\lambda (s-s_1)}{2d_0^2} +\frac{b(s-s_1)\mu_\lambda (s-s_1)}{d_0}.$$

And this will be guaranteed if
$$\frac{\lambda ^2}{d_0^2} + \frac{2\lambda}{d_0} \le 1,$$
which will keep true when $\lambda < d_0 / 4$.

Actually, for $\pmb{P}_2$, this restriction is still valid from the similar estimation.

From all the calculation and analysis above, it can be noticed that we only need to keep $\lambda \ll 1, K \gg 1$ and $\lambda K \ll 1$, and this will be easily guaranteed. Finally, we complete the whole proof by select $\epsilon = \epsilon_2$, $K$ sufficiently large and $\lambda K$ sufficiently small.

\section{Non-rigidity and perturbations}

The round-corner lemma and Remark~\ref{-ori} make it easy to construct a series of non-trivial perturbations, hence establish the non-rigidity in four different situations as below. It should be noticed that we will then only consider the generatrices in $P_2$, denoted by $\pmb{r}$ consistently, and $R, \pmb{n}, \pmb{T}$ are also consistent with those in the proof of round-corner lemma.

We will first consider the global rigidity.

\begin{theorem}\label{dePrigi}
For $\forall a \in (0, 1)$, $S_a$ does not have $H^-$ rigidity
\end{theorem}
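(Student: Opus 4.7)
The plan is to produce, for any $a\in(0,1)$, an explicit smooth nontrivial perturbation $\Sigma_a\ne\mathrm{id}$ of $S_a$, compactly supported in $S_a$ and satisfying $H(\Sigma_a)\le 1$. I would realize $\Sigma_a$ as a surface of revolution around the $x_3$-axis, gluing a piece of a Delaunay CMC surface with $H\le 1$ to the unit sphere via the round-corner lemma in its $H^-$ form (Remark~\ref{-ori}).

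Concretely, I would choose $a'\in(a,1)$ close to $a$, and by Lemma~\ref{Htwith1} pick $t$ near $1$ such that $H^{a'}(t)<1$ (taking $t>1$ when $a'<a_0$, $t<1$ when $a'>a_0$, and either side when $a'=a_0$). The Delaunay piece $\widetilde{C}(a',t)$ and the sphere piece $\widetilde{C}(a',1)=S^2\cap\{|x_3|\le\sqrt{1-(a')^2}\}$ share the pair of boundary circles at $|x_3|=\sqrt{1-(a')^2}$, and their generatrices in $P_2$ meet transversally there by Corollary~\ref{yoko}. I would then apply the round-corner lemma at each of the two corners, with the labeling of the meeting curves $\pmb{r}_1,\pmb{r}_2$ chosen so that the minus-orientation condition $\pmb{T}_2\cdot\pmb{n}_1<0$ is satisfied, producing a smooth generatrix whose revolution yields $\Sigma_a$.

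Then $H(\Sigma_a)\le 1$ everywhere: $H=1$ on the unchanged sphere parts, $H=H^{a'}(t)<1$ on the Delaunay piece, and $H\le 1$ on the rounded corners by Remark~\ref{-ori}. For $a'$ close enough to $a$ from above, the perturbed region (essentially a small neighborhood of $\widetilde{C}(a',1)$) lies inside $S_a$, so $\Sigma_a=\mathrm{id}$ outside $S_a$; nontriviality follows from $\tilde{c}(a',t)\ne \tilde{c}(a',1)$ on the interior (Remark~\ref{htmono}). Hence $\Sigma_a$ is the required counterexample to $H^-$ rigidity of $S_a$.

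The principal technical step is verifying the orientation condition $\pmb{T}_2\cdot\pmb{n}_1<0$ at the corners with an appropriate labeling. This reduces to a tangent-slope comparison at $(\pm\sqrt{1-(a')^2},a')$ between $\tilde{c}(a',t)$ and the unit circle, controlled by Corollary~\ref{yoko} together with the sign of $H^{a'}(t)-1$ and the direction of the Delaunay bulge (inside or outside $S^2$) dictated by Remark~\ref{htmono}. The flexibility in choosing $a'$ on either side of $a_0$, combined with the two sides of $t$ relative to $1$, gives enough room to realize the minus orientation in some parameter regime, after which the remaining verifications are routine.
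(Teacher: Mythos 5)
You cannot arrange the sign condition $\pmb{T}_2\cdot\pmb{n}_1<0$ ``by labeling'': once the normal along the glued generatrix is forced to be the continuation of the inward normal of the unperturbed sphere (the paper's convention), the sign at each junction is determined by the geometry, namely by which side of the circle the Delaunay generatrix leaves from. In the only regime your cited lemmas actually give you a piece with $H^{a'}(t)<1$, namely $a'\ge a_0$ and $t\in[1-\tilde{\epsilon}_{a'},1)$, Remark~\ref{htmono} and Corollary~\ref{yoko} say that $\tilde{c}(a',t)$ lies \emph{inside} the sphere and meets it at $(\pm\sqrt{1-(a')^2},a')$ with a strictly shallower slope. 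At the left junction this means $\pmb{T}_1=(a',\sqrt{1-(a')^2})$, $\pmb{n}_1=(\sqrt{1-(a')^2},-a')$ (inward), $\pmb{T}_2\propto(1,m)$ with $0<m<\sqrt{1-(a')^2}/a'$, hence $\pmb{T}_2\cdot\pmb{n}_1>0$: the corner turns \emph{toward} the inward normal, so only the plus version (Theorem~\ref{round-corner-lemma}) applies and any rounding confined to $B(p,\delta)$ has large positive curvature there, so $H>1$ on the rounded corner and the $H\le 1$ requirement is destroyed. Relabeling $\pmb{r}_1,\pmb{r}_2$ only reverses the traversal, which flips the induced normal to the outward side and flips the sign of all the $H_i$ as well, so nothing is gained. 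For $a\ge a_0$ you are forced to take $a'>a_0$, so this failure is unavoidable; for $a<a_0$ the alternative regime $a'<a_0$, $t>1$ is not covered by Remark~\ref{htmono} or Corollary~\ref{yoko} (both assume $a\ge a_0$), so the position/slope comparison you would need at the corners is unproven there as well.

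There is also a structural obstruction showing the plan cannot be repaired within its own framework for $a\in(a_0,1)$: your surface stays in an arbitrarily small neighborhood of $\overline{S}_a$ once $t$ is close to $1$, while Theorem~\ref{localH-} establishes \emph{local} $H^-$ rigidity for exactly these $a$; so any nontrivial perturbation with $H\le1$ must leave every small neighborhood of $S_a$ (compare also Corollary~\ref{tangawa}(2): it must go outside $D^3$). The paper's proof does precisely this and uses no Delaunay surface at all: with $a'=(1+a)/2$ it reflects the generatrix arc of the unit circle across the line $x_1=a'$ in $P_2$, obtaining a large outward bulge (nearly a torus of revolution) whose mean curvature with the induced normal is $\tfrac12\bigl(1+\cos\theta/d\bigr)\le\tfrac12\bigl(1+\tfrac{1}{2a'+1}\bigr)<1$, and at the two corners the reflected arc satisfies $\pmb{T}_2\cdot\pmb{n}_1<0$, so Remark~\ref{-ori} rounds them with $H\le1$. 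So the correct witness is a large-scale outward deformation, not a nearby Delaunay graft; as written, your construction does not prove the theorem for any $a$, and provably cannot for $a\ge a_0$.
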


\begin{proof}
We only need to construct $\pmb{r}$ as a properly perturbed generatrix from $S^1 \subset P_2$ to create surface $R$ such that $H(R) \ge  1$. Denote $a' = (1+a) / 2$ and $\pmb{r}_1$ the major arc of the unit circle under $x_1 = a'$, i.e.
\begin{align*}
\pmb{r}_1 = \{(x_3,x_1) \in P_2 | x_3^2 + x_1^2 = 1, x_1 \le a' \}
\end{align*}
Let $s$ the length of curve, and $\pmb{r}_1(0) = (-\sqrt{1-a'^2}, a')$, with $x_3$ the abscissa. Then we assign $\pmb{T}_1(0) = (a', \sqrt{1-a'^2})$ (this orientation means $s \le 0$ for $\pmb{r}_1$) and $\pmb{n}_1(0) = (\sqrt{1-a'^2}, -a')$.

Similarly, denote $\pmb{r}_2$ the symmetry of $\pmb{r}_1$ by $x_1 = a'$, i.e.
\begin{align*}
\pmb{r}_2 = \{(x_3,x_1) \in P_2 | x_3^2 + (x_1 - 2a')^2 = 1, x_1 \ge a' \}
\end{align*}
Also, let $s$ the length of curve, and $\pmb{r}_2(0) = (-\sqrt{1-a'^2}, a')$. Then denote $\pmb{T}_2(0) = (-a', \sqrt{1-a'^2})$ and $\pmb{n}_2(0) = (\sqrt{1-a'^2}, a')$.

Denote $p = \pmb{r}_i(0)$ and $\delta = (a' - a) / 2$. It is apparent that $\pmb{T}_2 \cdot \pmb{n}_1 < 0$, so by Remark~\ref{-ori}, we can select $\epsilon > 0$ and construct $\pmb{r}$ suiting all the requests in the round-corner lemma, with $H(s) \le 1, |s| < \epsilon$.

Also, we can construct this perturbation symmetrically about the $x_1$ axis, which will generate a complete perturbation of $S^1$. When $|s| < \epsilon$, $\pmb{r}$ will not intersect with other parts of $\pmb{r}$, so this perturbation can be seen as an embedded map.

Since $a' > a$ and $\delta < a' - a$, the surface of revolution can be seen as a perturbation of $S_a$, satisfying all of requests. Therefore, we finish the construction, which shows $S_a$ does not exist $H^-$ rigidity.
\end{proof}

\begin{theorem}\label{-Prigi}
Suppose $a \in (0, \sqrt{3} / 2)$, then $S_a$ does not have $H^+$ rigidity.
\end{theorem}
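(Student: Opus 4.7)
My plan is to mirror the proof of Theorem~\ref{dePrigi}: I will build a nontrivial generatrix in $P_2$ by gluing a sphere arc to a Delaunay unduloid piece via the round-corner lemma, but arrange the two corners to \emph{turn right} so that Theorem~\ref{round-corner-lemma} itself (rather than Remark~\ref{-ori}) applies and produces $H\ge 1$. The $H^+$ non-rigidity for $a<\sqrt 3/2$ should arise from exactly the geometric fact that was used in Theorem~\ref{Prigi} to characterize the critical threshold, namely the crossing of an $H=1$ unduloid generatrix with the sphere generatrix.

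Concretely, I will take the unduloid $u_{t_0}$ of Lemma~\ref{Tundu} with $t_0>0$ small. In the limit $t_0\to 0^+$, its upper generatrix becomes the upper semicircle of the unit circle centered at $(1,0)\in P_2$, which meets the sphere generatrix $x_1=\sqrt{1-x_3^2}$ transversely at the unique point $(1/2,\sqrt3/2)$ in the half-plane $x_3>0$. Since $a<\sqrt3/2$ is equivalent to $\sqrt{1-a^2}>1/2$, Lemma~\ref{Tundu} together with continuity in $t_0$ will yield, for $t_0$ sufficiently small, a symmetric pair of transverse crossings $p_\pm=(\pm x_3^c,x_1^c)$ with $0<x_3^c<\sqrt{1-a^2}$, between which the unduloid lies strictly below the sphere. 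I then take the perturbed generatrix $\pmb r$ to equal the sphere arc for $|x_3|\ge x_3^c$ and $u_{t_0}$ for $|x_3|\le x_3^c$, and apply Theorem~\ref{round-corner-lemma} at each $p_\pm$ in a ball of radius $\delta<\sqrt{1-a^2}-x_3^c$. The surface $\Sigma_a$ obtained by rotating $\pmb r$ around the $x_3$-axis is then the identity outside $S_a$ and is nontrivial because its generatrix dips down to the neck $x_1=t_0<1$ at $x_3=0$.

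Two verifications remain. First, I must check that each corner turns right in the sense of Theorem~\ref{round-corner-lemma}. At $p_+$, traversing with $x_3$ increasing, the unduloid enters with forward tangent $\pmb T_1=(1,u')/\sqrt{1+u'^2}$ (up-right, since $u'(x_3^c)>0$) and the sphere exits with $\pmb T_2=(x_1^c,-x_3^c)$ (down-right); the left-hand normal $\pmb n_1=(u',-1)/\sqrt{1+u'^2}$ then gives $\pmb T_2\cdot\pmb n_1=(x_1^c u'(x_3^c)+x_3^c)/\sqrt{1+u'^2}>0$, and the case $p_-$ follows from the even symmetry of $u_{t_0}$. Combined with $H=1$ on the sphere and unduloid pieces, Theorem~\ref{round-corner-lemma} thus delivers $H(\Sigma_a)\ge 1$ everywhere. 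Second, embeddedness is clear: the generatrix stays strictly above the rotation axis ($x_1\ge t_0>0$), and each smoothed corner lies inside a non-self-intersecting neighborhood of $p_\pm$ produced by the round-corner lemma. The only real technical point, and the sole place where the hypothesis $a<\sqrt 3/2$ enters, is confirming that $x_3^c(t_0)\in(0,\sqrt{1-a^2})$ for all small $t_0$; this reduces to the limiting crossing at $x_3=1/2$ and the strict inequality $u_t(1/2)<\sqrt 3/2$ of Lemma~\ref{Tundu}, which provide the room needed for any fixed $a<\sqrt 3/2$.
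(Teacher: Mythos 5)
Your construction is correct and is essentially the paper's own proof: the paper likewise glues the sphere generatrix to a small-neck unduloid $u_t$ at its intersection $Q_t$ with the unit circle (doing this symmetrically about the $x_1$-axis) and smooths the two corners with the round-corner lemma in the orientation $\pmb{T}_2\cdot\pmb{n}_1>0$ so that $H\ge 1$, the threshold $\sqrt{3}/2$ entering exactly through the degenerate position of the crossing at $(1/2,\sqrt{3}/2)$. The only difference is how one certifies that the gluing point lies strictly inside the zone: the paper computes $\lim_{t\to 0^+}u_t^{-1}(a)=1-\sqrt{1-a^2}<\sqrt{1-a^2}$ explicitly (dominated convergence on the defining integral), whereas you invoke convergence of $u_t$ to the circle centered at $(1,0)$ together with Lemma~\ref{Tundu} and continuity of the transverse crossing --- the same underlying fact, equally valid.
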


\begin{proof}
We still need to construct $\pmb{r}_2$ first ($\pmb{r}_1$ is naturally the original unit circle similarly to Theorem~\ref{dePrigi}), and this time the unduloid used in step 2 of the proof of Theorem~\ref{Prigi} will be chosen again.

Given $t \in (0, 1/2)$, consider the system $x_1 = u_t(x_3)$ which have been used once in section 2. It is known that $u_t$ increases when $0 < x_3 < 1$(since the half perimeter of $E_t$ is greater than $1$), and this time we will write the ODE of them as:
\begin{align*}
\frac{dx_1}{dx_3} = \sqrt{(\frac{x_1}{x_1^2 +t-t^2})^2 - 1}\\
x_1(0) = t.
\end{align*}

Now consider the first $P_t \in u_t$ with $x_1 (P_t) = a$, i.e.
$$x_3(P_t) = u_t^{-1}(a),$$
and it is easy to solve the ODE to get
\begin{align*}
x_3(P_t) &= \int_{t}^{a} \frac{1}{D(1, t, x_1)}dx_1 = \int_{0}^{a-t} \frac{1}{D(1, t, x + t)}dx
\end{align*}
It can be verified directly from~\ref{D} that $$\frac{\partial D(1,t,x+t)}{\partial t} < 0.$$
Hence from Lebesgue dominated convergence theorem, there is
\begin{align*}
\lim_{t \rightarrow 0^+}x_3(P_t) = \int_{0}^{a} \frac{1}{D(1, 0, x)}dx = 1 - \sqrt{1 - a^2} < \sqrt{1-a^2}.
\end{align*}
The last inequality sign is because $a < \sqrt{3}/{2}$.

Then fix $t$, and define $Q_t = u_t \cap S^1$ with $x_3(Q_t) > 0$, and it is apparent that $x_1(Q_t) > a$. Then, let $\pmb{r}_1(s)$ the unit circle with $x_1 \le x_1(Q_t)$, satisfying $\pmb{r}_1(0) = Q_t$, and $\pmb{r}_1$ rounds clockwise as $s$ increases. Also define $\pmb{r}_2$ the curve of $u_t$ with $x_3 \le x_3(Q_t)$, satisfying $\pmb{r}_2(0) = Q_t$, and $\pmb{r}_2$ runs in positive direction of $x_3$ as $s$ increases.

After that, it is easy to verify that $\{ \pmb{r}_i \}$ and their orientation suit all the requests in the round-corner lemma, with $\pmb{T}_2 \cdot \pmb{n}_1 > 0$. Let $\delta = (x_1(Q_t) - a)/2$, and from round-corner lemma, we contruct $\pmb{r}$ with $0 < \epsilon \ll 1$ near $Q_t$, which satisfies $H(s) \ge 1, |s| < \epsilon$. Since we can perturb the corner near the intersection symmetrically about $x_1$ axis, we also finish this proof.
\end{proof}

For the local rigidity, we need an easy proposition to help us select the original curve to be polished. With the parameters consistent with section 3, we define
$$\hat{c}(a,t) = c(1,t) \cap \{ |x_3| \le \sqrt{1-a^2} \}$$
and $\hat{C}(a,t)$ accordingly, and it is apparent that $\hat{C}(a,1) = S_a$.

\begin{proposition}\label{findpolish}
For $a \in (0,1)$, consider any open domain $\Theta \subset \R^3$ satisfying $\overline{S}_a \subset \Theta$.
Then $\exists \epsilon^{\Theta}_a \in (0, \epsilon''_a)$, such that
$$\hat{C}(a,t) \subset \Theta, \forall t \in [1- \epsilon^{\Theta}_a, 1 + \epsilon^{\Theta}_a].$$
\end{proposition}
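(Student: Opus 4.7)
The plan is to exploit the continuous dependence of the Delaunay generatrix on the parameter $t$, combined with the compactness of $\overline{S}_a$ inside the open set $\Theta$. The statement is essentially a soft topological fact once Proposition~\ref{delaunay} is available.

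First I would note that at $t=1$ the generatrix reduces to $c(1,1,x_3)=\sqrt{1-x_3^2}$, so rotating gives $\hat{C}(a,1)=S_a$. By Proposition~\ref{delaunay}, for $(H,t)$ in the neighborhood $|H-1|+|t-1|<\epsilon_a$ the function $c(H,t,x_3)$ is well-defined and jointly continuous on the compact slab $x_3\in[0,\sqrt{1-a^2}+\delta_a]$; specialising to $H=1$ and using the even extension in $x_3$ yields a map $t\mapsto c(1,t,\cdot)$ that is continuous into $C^0([-\sqrt{1-a^2},\sqrt{1-a^2}])$ with the sup norm. In particular, $c(1,t,\cdot)\to c(1,1,\cdot)=\sqrt{1-x_3^2}$ uniformly as $t\to 1$.

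Since $\overline{S}_a$ is compact and $\Theta\subset\R^3$ is open with $\overline{S}_a\subset\Theta$, the distance $d_0:=\mathrm{dist}(\overline{S}_a,\R^3\setminus\Theta)$ is strictly positive, so the open tubular neighborhood $\{p\in\R^3:\mathrm{dist}(p,\overline{S}_a)<d_0\}$ lies inside $\Theta$. By the uniform continuity just recorded I can pick $\epsilon^{\Theta}_a\in(0,\epsilon''_a)$ so that whenever $|t-1|\le\epsilon^{\Theta}_a$ one has
\[
\sup_{|x_3|\le\sqrt{1-a^2}}\bigl|c(1,t,x_3)-\sqrt{1-x_3^2}\bigr|<d_0.
\]
Because rotation around the $x_3$-axis is an isometry of $\R^3$, this pointwise bound on generatrices carries over verbatim to the surfaces of revolution: every point of $\hat{C}(a,t)$ lies within Euclidean distance $d_0$ of a point of $\hat{C}(a,1)=S_a$, so $\hat{C}(a,t)\subset\Theta$. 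Shrinking $\epsilon^{\Theta}_a$ further if necessary ensures it stays in $(0,\epsilon''_a)$ so that $c(1,t,\cdot)$ remains in the domain of definition supplied by Proposition~\ref{delaunay}.

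There is no real obstacle here; the argument is a straightforward application of uniform continuity plus the compactness-in-open-set principle. The only tiny piece of care is reading off uniform continuity on the closed $x_3$-slab rather than just pointwise continuity, but this is immediate from joint continuity of $c(H,t,x_3)$ on a compact neighborhood of $(1,1,\cdot)$.
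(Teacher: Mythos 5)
Your argument is correct and is exactly the continuity-plus-compactness reasoning the paper invokes (the paper simply states the proposition is "obvious from the continuity of $\hat{C}$ and the compactness of $S_a$"); you have merely written out the details of the positive distance from $\overline{S}_a$ to $\R^3\setminus\Theta$ and the uniform convergence of the generatrices as $t\to 1$.
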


This proposition is obvious from the continuity of $\hat{C}$ and the compactness of $S_a$.

\begin{theorem}\label{local1}
Suppose $a \in (0, a_0)$, then $S_a$ does not have local $H^+$ rigidity.
\end{theorem}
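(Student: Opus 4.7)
The plan is to refute local $H^+$ rigidity by constructing, for every open neighborhood $\Theta \supset \overline{S}_a$, a nontrivial smooth perturbation $\Sigma_a$ of $S_a$ lying in $\Theta$ with $H(\Sigma_a) \ge 1$. The idea is to replace a neighborhood of the equator in $S_a$ by a piece of the $H = 1$ Delaunay surface $C(1,t)$ with neck slightly less than $1$, and smooth the resulting corner with the round-corner lemma.

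Given $\Theta$, I first choose $t \in (1 - \eta_a, 1)$ close enough to $1$ so that the relevant portions of $c(1,t)$ stay inside $\Theta$ (by the evident analogue of Proposition~\ref{findpolish}). Because $a < a_0$, Lemma~\ref{H=1}(2) gives $x_a(t) > \sqrt{1-a^2}$, meaning the generatrix $c(1,t)$ has $x_1 > a$ at $x_3 = \sqrt{1-a^2}$; meanwhile, $c(1,t)$ takes the value $t < 1$ at $x_3 = 0$. Hence $c(1,t)$ lies strictly below the unit circle at the neck and strictly above it at the boundary circle of $S_a$, so the two curves must cross at some point $p = (x_3^*, x_1^*)$ with $x_3^* \in (0, \sqrt{1-a^2})$. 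I set $\pmb{r}_1$ to be the unit circle arc (continued into $x_3 > x_3^*$) and $\pmb{r}_2$ to be the arc of $c(1,t)$ (continued into $x_3 < x_3^*$), both passing through $p$, and apply the round-corner lemma to obtain a smooth generatrix $\pmb{r}$ contained in $B(p, \delta)$. Rotating $\pmb{r}$ around the $x_3$-axis and reflecting across the equator (which is smooth because $c(1,t)$ has horizontal tangent at its neck $x_3 = 0$) yields $\Sigma_a$. By construction, $\Sigma_a = id$ outside $S_a$, agrees with the Delaunay piece near the equator (hence is not the identity), and has mean curvature $\ge 1$ everywhere: the sphere and Delaunay pieces have $H = 1$, and the rounded corners have $H \ge 1$ by the conclusion of the round-corner lemma.

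The main obstacle is verifying the orientation hypothesis $\pmb{T}_2 \cdot \pmb{n}_1 > 0$, since only this sign produces the desired inequality $H \ge 1$ in the smoothed region (the opposite sign gives $H \le 1$ via Remark~\ref{-ori}). Working out the vectors at $p$, the condition reduces to the slope inequality $|dx_1/dx_3|_{c(1,t)} < |dx_1/dx_3|_{\text{circle}}$ at $p$. But this is exactly forced by the geometry: since the difference $c(1,t,x_3) - \sqrt{1-x_3^2}$ passes from negative at $x_3 = 0$ to positive at $x_3 = \sqrt{1-a^2}$, its derivative must be nonnegative at the first crossing $p$, which means the Delaunay slope there is the less negative of the two. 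The whole construction therefore rests on Lemma~\ref{H=1}(2); in the regime $a > a_0$ the sign in that lemma flips and the same type of round-corner construction instead yields a perturbation with $H \le 1$, consistent with the complementary rigidity results in Theorem~\ref{thm-rigi-local}.
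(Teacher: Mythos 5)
Your construction is essentially the paper's own proof of Theorem~\ref{local1}: pick $t<1$ close to $1$ so that the Delaunay piece with $H=1$ stays in $\Theta$ (Proposition~\ref{findpolish}), use Lemma~\ref{H=1}(2) to see that its generatrix lies below the unit circle at the equator but above it at $x_3=\sqrt{1-a^2}$, splice the two curves at a crossing point (which automatically has $x_1>a$, so the splice happens strictly inside $S_a$), smooth with the round-corner lemma, and rotate and reflect. The paper takes the last crossing $\hat x_3=\sup\{x_3>0:\hat c(a,t',x_3)<\hat c(a,1,x_3)\}$ instead of your first crossing; either choice works since both curves are graphs over $x_3$, so the spliced generatrix is embedded in any case.

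The one place your argument falls short is exactly the step you single out as the main obstacle. From ``the difference goes from negative to positive'' you only get that its derivative is $\ge 0$ at the crossing, i.e.\ a \emph{non-strict} slope comparison. That is not enough: if the slopes were equal, the two curves would be tangent at $p$, so the transversality hypothesis of the round-corner lemma (Theorem~\ref{round-corner-lemma}) would fail and $\pmb{T}_2\cdot\pmb{n}_1$ would vanish rather than be positive, so no sign of $H$ could be extracted in the smoothed region. The paper closes this by invoking the ODE itself: at a common point of height $x_1$ the slopes of the two graphs are determined by $D(1,t',x_1)$ and $D(1,1,x_1)$ from \ref{D}, so equality of slopes would force $D(1,t',\hat x_1)=D(1,1,\hat x_1)$, i.e.\ $t'-t'^2=0$, impossible for $t'\ne 1$ near $1$. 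Adding this one line (together with the routine remark that the rounding radius $\delta$ is chosen so small that the rounded corner and its surface of revolution stay in $\{x_1^2+x_2^2>a^2\}\cap\Theta$, as in the paper's choice of $\hat\delta$) makes your proposal complete and identical in substance to the paper's argument.
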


\begin{proof}
If $S_a$ has this rigidity, then there exists a domain $\Theta \subset \R^3$ such that $\overline{S}_a \subset \Theta$ and there is not non-trivial perturbation of $S_a$ satisfying $H \ge 1$.

From Proposition~\ref{findpolish}, we can select $t' \in ( 1-\epsilon^{\Theta}_{a}, 1)$, such that $\hat{C}(a, t') \subset \Theta$.

From the definition of $\hat{C}$, we know $H(\hat{C}) = 1$. From lemma~\ref{H=1}, there is $$x^*(a,1,t') > \sqrt{1-a^2}.$$
Hence, we have $$\hat{c}(a,t',\sqrt{1-a^2}) > a.$$

Therefore, we take 
$$\hat{x}_3 = \sup \{ x_3 > 0:\hat{c}(a,t',x_3) < \hat{c}(a,1,x_3)\}$$
so it is apparent that $\hat{x}_3 < \sqrt{1-a^2}$, and 
$$\hat{c}(a,t',\hat{x}_3) = \hat{c}(a,1,\hat{x}_3) \triangleq \hat{x}_1 > a.$$

In $P_2$, define two points $$Q_1(-\hat{x}_3, \hat{x}_1) \quad Q_2(\hat{x}_3, \hat{x}_1)$$
From the compactness of $\partial S_{a}$, there exists $\hat{\delta} \in (0, \hat{x}_1 -a)$, such that if we rotate $\cup_{i = 1,2} \overline{B(Q_i, \hat{\delta})}$ around $x_3$ axis to generate a closed domain in $\R^3$, denoted $\Theta'$, then $\Theta' \subset \Theta$.

After that, we can try to use round-corner lemma to finish the construction of the perturbations. Denote $\pmb{r}_1(s)$ the unit circle with $x_1 \le  a$, and when $s$ increases, $\pmb{r}_1$ rotates clockwise, and let $\pmb{r}_1(0) = Q_1$. Also define $\pmb{r}_2(s)$ the curve of $c_t$, with $x_3 = x_3(s)$ increasing. Also, let $\pmb{r}_2 (0) = Q_1$. Therefore, we only need to verify $\pmb{T}_2 \cdot \pmb{n}_1 > 0$, then the method of construction is similar to theorem~\ref{-Prigi}.\\
Now we try to prove $\pmb{T}_2 \cdot \pmb{n}_1 > 0$. Actually, it is equivalent to prove $$\frac{d\hat{c}(a,t')}{dx_3}|_{\hat{x}_3} > - \sqrt{(\frac{1}{a})^2 - 1}$$
Since $\hat{c}(a,t') < \hat{c}(1,1), |x_3|\le \sqrt{1-a^2}$, the $\ge$ can be directly guaranteed. If the equality holds, then we have $$D(1,t',\hat{x}_1) = D(1,1,\hat{x}_1) \Rightarrow t' = 1$$
which is apparently a conflict.

Therefore, we finish the proof.
\end{proof}

\begin{theorem}
Suppose $a \in (0, a_0]$, then $S_a$ does not have local $H^-$ rigidity.
\end{theorem}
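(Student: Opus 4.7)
The plan is to mirror the construction in Theorem~\ref{local1}, swapping the roles so that the resulting perturbation satisfies $H \le 1$ rather than $H \ge 1$. Given any open $\Theta \supset \overline{S}_a$, by Proposition~\ref{findpolish} there exists $\epsilon^{\Theta}_a \in (0, \epsilon''_a)$ with $\hat{C}(a,t) \subset \Theta$ for all $|t-1| \le \epsilon^{\Theta}_a$. Invoking Lemma~\ref{H=1} (case (2) if $a < a_0$, case (3) if $a = a_0$), I would choose $t' \in (1, 1 + \min\{\eta_a, \epsilon^{\Theta}_a\})$; then $x^{*}(a, 1, t') < \sqrt{1-a^2}$, which translates to $\hat{c}(a, t', \sqrt{1-a^2}) < a$. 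Note that $t' > 1$ works for \emph{both} subcases, and this is the key sign difference from Theorem~\ref{local1}.

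Since $\hat{c}(a, t', 0) = t' > 1$ whereas $\hat{c}(a, t', \sqrt{1-a^2}) < a$, while the sphere profile runs from $1$ down to $a$ over the same $x_3$-interval, the two generatrices must cross. Take $\hat{x}_3 \in (0, \sqrt{1-a^2})$ to be the smallest positive crossing, and set $\hat{x}_1 = \sqrt{1 - \hat{x}_3^{2}} \in (a, 1)$. A direct inspection of~\ref{D} using $t' - t'^{2} < 0$ gives
\[
D(1, t', \hat{x}_1) \;>\; \frac{\hat{x}_3}{\hat{x}_1} \;=\; D(1, 1, \hat{x}_1),
\]
so the Delaunay profile is strictly steeper than the unit circle at $(\hat{x}_3, \hat{x}_1)$, and the two curves cross transversely.

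Next I would apply the round-corner lemma through Remark~\ref{-ori}. Parametrize $\pmb{r}_1$ along the sphere and $\pmb{r}_2$ along $\hat{c}(a, t')$ so that both tangent vectors at the crossing point in the direction of decreasing $x_3$ (toward $\partial S_a$). In the $(x_3, x_1)$ coordinates this gives $\pmb{T}_1(0) = (-\hat{x}_1, \hat{x}_3)$, the left-hand normal $\pmb{n}_1(0) = (-\hat{x}_3, -\hat{x}_1)$ coincides with the inward normal $\widetilde{\pmb{n}}$ of $S^2$, and $\pmb{T}_2(0) = (-1, D(1, t', \hat{x}_1))/\sqrt{1 + D(1, t', \hat{x}_1)^{2}}$. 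A one-line computation then yields
\[
\pmb{T}_2(0) \cdot \pmb{n}_1(0) \;=\; \frac{\hat{x}_3 - D(1, t', \hat{x}_1)\,\hat{x}_1}{\sqrt{1 + D(1, t', \hat{x}_1)^{2}}} \;<\; 0,
\]
which is exactly the hypothesis of Remark~\ref{-ori}. Since $H_1 \equiv 1$ on the sphere and $H_2 \equiv 1$ on the Delaunay piece, the lemma furnishes a smooth generatrix $\pmb{r}$ agreeing with the sphere for $x_3 \ge \hat{x}_3 + \epsilon$, with $\hat{c}(a, t')$ for $x_3 \le \hat{x}_3 - \epsilon$, and having $H(s) \le 1$ throughout the corner region; picking $\delta > 0$ small keeps the rounded corner inside $\Theta$. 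Reflecting the construction across $\{x_3 = 0\}$ handles the symmetric corner at $-\hat{x}_3$.

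Revolving the resulting profile around the $x_3$-axis and extending by the identity on $S^2 \setminus S_a$ produces an embedded perturbation $\Sigma_a$ of $S_a$ with $H(\Sigma_a) \le 1$, $\Sigma_a(S_a) \subset \Theta$, and $\Sigma_a \ne id$ (the Delaunay piece differs from the sphere near the equator), contradicting any hypothetical local $H^-$ rigidity. The main points requiring care are the sign of $\pmb{T}_2 \cdot \pmb{n}_1$ (so that Remark~\ref{-ori} applies rather than the round-corner lemma proper), together with choosing $t' - 1$ and the corner scale $\delta$ small enough relative to $\Theta$ so that the whole construction fits inside $\Theta$.
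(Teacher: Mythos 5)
Your proposal is correct and follows essentially the same route as the paper: choose a Delaunay profile $\hat{c}(a,t')$ with $t'>1$ inside $\Theta$ via Proposition~\ref{findpolish} and Lemma~\ref{H=1}, locate its transverse crossing with the circle, and glue with the round-corner lemma in the form of Remark~\ref{-ori} (sign $\pmb{T}_2\cdot\pmb{n}_1<0$, hence $H\le 1$), then revolve and reflect. The only (harmless) deviations are that you treat all $a\in(0,a_0]$ uniformly instead of first reducing to $a=a_0$ by Remark~\ref{monotonicity}, and you verify transversality by the direct inequality $D(1,t',\hat{x}_1)>D(1,1,\hat{x}_1)$ rather than the paper's equality-forces-$t'=1$ argument.
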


\begin{proof}
Since Remark~\ref{monotonicity}, we only need to consider $a = a_0$. If $S_{a_0}$ has this rigidity, then there exists a domain $\Theta \subset \R^3$ such that $\overline{S}_{a_0} \subset \Theta$ and there is not non-trivial perturbation of $S_{a_0}$ satisfying $H \le 1$.

Since Proposition~\ref{findpolish}, we can select $t_0' \in (1, 1+ \epsilon^{\Theta}_{a_0})$, such that $\hat{C}(a_0, t'_0) \subset \Theta$. Since $x^*(a_0,1,t_0') < \sqrt{1-a_0^2}$ from lemma~\ref{H=1}, there is 
$$\hat{c}(a_0,t_0',\sqrt{1-a_0^2}) < a_0.$$

Hence, take $$\hat{x}'_3 = \sup \{ x_3 > 0|\hat{c}(a_0,t_0',x_3) > \hat{c}(1,1,x_3)\}.$$
So it is apparent that $\hat{x}_3' < \sqrt{1-a_0^2}$, and 
$$\hat{c}(a_0,t'_0,\hat{x}'_3) = \hat{c}(a_0,1,\hat{x}'_3) \triangleq \hat{x}'_1 > a_0.$$

Completely similar to the proof of theorem~\ref{local1}, what we only need to verify is $$\frac{d\hat{c}(a_0,t_0')}{dx_3}|_{\hat{x}'_3} < - \sqrt{(\frac{1}{a_0})^2 - 1}.$$
Also, the $\le$ has been guaranteed, and if the equality holds, we have $$D(1,t_0',\hat{x}'_1) = D(1,1,\hat{x}'_1) \Rightarrow t'_0 = 1,$$
which is still a conflict. Thus we prove this theorem.
\end{proof}

\begin{remark}
The two local non-rigidity theorems shows the rigidity will have interesting behavior at $a = a_0$, and the plus and minus have such subtle difference as above. This is because the comparison of $x_a(t)$ and $\sqrt{1-a^2}$ at $t<1$ and $t > 1$ is consistent when $a = a_0$, but is opposite when $a \neq a_0$, which is virtually, a transcritical bifurcation.
\end{remark}

Summarizing the results in Section~2, 3 and 5, we complete the proof to the main theorems in the introduction.

\section{Appendix: Calculation involving elliptical integrals}

Now we will prove Lemma~\ref{H=1}. It needs some estimations of elliptical integral.
We will calculate the derivative of the $x_a$ function to finish the estimation.
It is easy to get 
$$x_a(t) = \int_{a}^{t} \frac{1}{D(1,t, x_1)}dx_1$$
We define $$f(a,t) = x_a(t) - \sqrt{1 - a^2}.$$

It is apparent that 
$$f(a, 1) = 0, \forall a \in (0,1).$$
So we only need to consider the relationship between $f(a,t)$ and $0$ when $t \approx 1$.

Notice that $f$ is a function of elliptic integral, so we will transform it to the standard form of elliptic integral first.
\begin{align*}
f(a,t) &= \int_{a}^{t} \frac{1}{\sqrt{(\frac{x_1}{x_1^2 - t^2 +t})^2 - 1}} dx_1 - \sqrt{1 - a^2}\\
&= \int_{a}^{t} \frac{x_1^2-t^2+t}{\sqrt{(t^2 - x_1^2)(x_1^2 - (1-t)^2)}} dx_1 - \sqrt{1 - a^2}\\
&\overset{x_1 = ut}{=} \int_{\frac{a}{t}}^{1} \frac{(u^2 t^2-t^2+t)t}{\sqrt{t^2(1 - u^2)(u^2 t^2 - (1-t)^2)}} du - \sqrt{1 - a^2}\\
&= \int_{\frac{a}{t}}^{1} \frac{u^2 t-t+1}{\sqrt{(1 - u^2)(u^2 - (\frac{1-t}{t})^2)}} du - \sqrt{1 - a^2}\\
&\overset{u = \cos \theta}{=} \int^{\arccos \frac{a}{t}}_{0} \frac{1-t \sin^2 \theta}{\sqrt{\cos ^2 \theta - (\frac{1-t}{t})^2}} d\theta - \sqrt{1-a^2}.
\end{align*}
Define 
$$k(t) = \frac{t}{\sqrt{2t-1}}, \theta(a, t) = \arccos \frac{a}{t}.$$
Then we have $$f(a,t) = \int^{\theta}_{0} \frac{k-kt \sin^2 \theta}{\sqrt{1-k^2 \sin ^2 \theta}} d\theta - \sqrt{1-a^2}.$$
Define two kinds of the elliptic integral as:
\begin{align*}
& F(k, \theta) = \int_{0}^{\theta} \frac{d\phi}{\sqrt{1-k^2 \sin^2 \phi}}\\
& E(k, \theta) = \int_{0}^{\theta} \sqrt{1-k^2 \sin^2 \phi} d\phi.
\end{align*}
Then we have
\begin{align*}
f(a,t) &= \frac{1-t}{\sqrt{2t-1}}F(k, \theta)+\sqrt{2t-1} E(k, \theta) - \sqrt{1 - a^2}\\
&= (k - \frac{t}{k})F(k, \theta)+\frac{t}{k}E(k, \theta) - \sqrt{1 - a^2}.
\end{align*}
It is known that
\begin{align*}
&\frac{\partial F}{\partial k} = \frac{E(k,\theta)}{k(1-k^2)} - \frac{F(k,\theta)}{k} - \frac{k\sin 2\theta}{2(1-k^2)\sqrt{1-k^2\sin^2 \theta}}\\
&\frac{\partial E}{\partial k} = E(k,\theta) - F(k, \theta).
\end{align*}
So we can calculate the differential of $f$:
\begin{align*}
\frac{\partial f}{\partial t} &= \frac{-t}{(2t-1)^{\frac{3}{2}}}F(k,\theta)+ \frac{1-t}{\sqrt{2t-1}}[\frac{\partial F}{\partial k} \frac{t-1}{(2t-1)^{\frac{3}{2}}}+\frac{\partial F}{\partial \theta} \frac{a}{t \sqrt{t^2 - a^2}}]\\
&+ \frac{1}{\sqrt{2t-1}} E(k,\theta) + \sqrt{2t-1} [\frac{\partial E}{\partial k} \frac{t-1}{(2t-1)^{\frac{3}{2}}}+\frac{\partial E}{\partial \theta} \frac{a}{t \sqrt{t^2 - a^2}}]\\
&= \frac{-t}{(2t-1)^{\frac{3}{2}}}F(k,\theta) + \frac{1}{\sqrt{2t-1}} E(k,\theta) - \frac{(t-1)^2}{(2t-1)^2} \frac{\partial F}{\partial k} + \frac{t-1}{2t-1} \frac{\partial E}{\partial k}\\
&+\frac{a}{t \sqrt{t^2 - a^2}}[\frac{1-t}{\sqrt{2t-1}} \frac{\partial F}{\partial \theta} + \sqrt{2t-1} \frac{\partial E}{\partial \theta}]\\
&= \frac{-t}{(2t-1)^{\frac{3}{2}}}F(k,\theta) + \frac{1}{\sqrt{2t-1}} E(k,\theta)+ \frac{t-1}{2t-1} (E(k,\theta) - F(k, \theta))\\
& + \frac{(t-1)^2}{(2t-1)^2} \frac{F(k,\theta)}{k} +\frac{1}{2t-1}[\frac{E(k,\theta)}{k} - \frac{k\sin 2\theta}{2\sqrt{1-k^2\sin^2 \theta}}]\\
&+\frac{a}{t \sqrt{t^2 - a^2}}[\frac{1-t}{\sqrt{(2t-1)(1-k^2\sin^2 \theta)}} + \sqrt{(2t-1)(1-k^2\sin^2 \theta)}].
\end{align*}
Define $\Delta(k, \theta) = \sqrt{1-k^2\sin^2 \theta}$, and we can then get
\begin{align*}
\frac{\partial f}{\partial t} &= -F(k,\theta)[\frac{t-1}{2t-1}+\frac{1}{t\sqrt{2t-1}}] + E(k,\theta)[\frac{t-1}{2t-1}+\frac{t+1}{t\sqrt{2t-1}}]\\
&+\frac{a}{t \sqrt{t^2 - a^2}}[\frac{1-t}{\sqrt{2t-1} \Delta(k, \theta)} + \sqrt{2t-1} \Delta(k, \theta)]- \frac{k\sin 2\theta}{2(2t-1)\Delta(k, \theta)}.
\end{align*}
What we care about is the value when $t = 1$, and since 
$$k(1) = 1 \quad \theta(a,1) = \arccos a \triangleq \theta_1 \quad \Delta(1, \theta_1) = \cos \theta_1.$$

We compute out that
\begin{align*}
\frac{\partial f}{\partial t} |_{t = 1} &= -F(1, \theta_1) + 2E(1,\theta_1) + \frac{a}{\sqrt{1-a^2}} \cos \theta_1-\sin\theta_1\\
&= -\ln \frac{1+\sqrt{1-a^2}}{a} + \sqrt{1-a^2} + \frac{a^2}{\sqrt{1 - a^2}}\\
&= -\ln \frac{1+\sqrt{1-a^2}}{a} +  \frac{1}{\sqrt{1 - a^2}} \triangleq g(a).
\end{align*}
It is easy to verify that $g(a)$ increases monotonically, and has a unique zero point, defined as $a_0$ which is roughly $a_0 \approx 0.5524$ by numeric computation.

Therefore, when $a > a_0$, we have $$\frac{\partial f}{\partial t} |_{t = 1} > 0.$$
By its continuity, we can select $0 < \eta_a \ll 1$, such that $$\frac{\partial f}{\partial t} > 0 \quad \forall t \in (1-\eta_a,1+\eta_a).$$

And since $f(a, 1) = 0$, the $(1)$ of Lemma~\ref{H=1} is apparent.

Similarly, when $a < a_0$, we have 
$$\frac{\partial f}{\partial t} |_{t = 1} < 0,$$
and we can select $0 < \eta_a \ll 1$, such that 
$$\frac{\partial f}{\partial t} < 0 \quad \forall t \in (1-\eta_a,1+\eta_a)$$
which shows that the $(2)$ of the lemma is true.

However, when $a=a_0$, we can not copy the method above, and we need to calculate the second derivative of $f(a,t)$. But since we only need $\partial^2 f / \partial t^2 |_{t=1}$, we do not need to write all the equations down.

We can easily prove 
$$\Delta(k,\theta) = \sqrt{\frac{a^2-(t-1)^2}{2t-1}}.$$
So there is
\begin{align*}
\frac{\partial ^2 f}{\partial t^2}|_{t = 1} &= -[\frac{\partial F}{\partial k} \frac{t-1}{(2t-1)^{\frac{3}{2}}}+\frac{\partial F}{\partial \theta} \frac{a}{t \sqrt{t^2 - a^2}}]|_{t=1}-F(1,\theta_1) \cdot \frac{d}{dt}|_{t=1}[\frac{t-1}{2t-1}+\frac{1}{t\sqrt{2t-1}}]\\
&+2 [\frac{\partial E}{\partial k} \frac{t-1}{(2t-1)^{\frac{3}{2}}}+\frac{\partial E}{\partial \theta} \frac{a}{t \sqrt{t^2 - a^2}}]|_{t=1}+E(1,\theta_1) \cdot \frac{d}{dt}|_{t=1}[\frac{t-1}{2t-1}+\frac{t+1}{t\sqrt{2t-1}}]\\
&+\frac{d}{dt}|_{t=1}[\frac{a}{t\sqrt{t^2-a^2}}(\frac{1-t}{\sqrt{a^2-(t-1)^2}} + \sqrt{a^2-(t-1)^2}) - \frac{a\sqrt{t^2-a^2}}{t(2t-1)\sqrt{a^2-(t-1)^2}}]\\
&= -\frac{1}{\sqrt{1-a^2}}+ F(1,\theta_1) + \frac{2a^2}{\sqrt{1-a^2}} -2\sqrt{1-a^2}+ \frac{a^4-2a^2}{(1-a^2)^\frac{3}{2}} - \frac{2}{\sqrt{1-a^2}}+3\sqrt{1-a^2}\\
&= \ln \frac{1+\sqrt{1-a^2}}{a} +\frac{a^2-2}{(1-a^2)^\frac{3}{2}} \ \triangleq h(a).
\end{align*}
It is easy to calculate that $$h(a_0) = \frac{a_0^2-2}{(1-a_0^2)^\frac{3}{2}} + \frac{1}{\sqrt{1-a_0^2}} = -\frac{1}{(1-a_0)^\frac{3}{2}} <0\Rightarrow \frac{\partial ^2 f(a_0, t)}{\partial t^2}|_{t = 1} < 0$$
And since $$\frac{\partial f(a_0, t)}{\partial t}|_{t = 1} = 0 \quad f(a_0, 1) = 0.$$

It is apparent that there exists $ 0 < \eta_{a_0} \ll 1$, such that $$f(a_0, t) < 1 \quad \forall t \in (1 - \eta_{a_0}, 1+\eta_{a_0}) \backslash \{ 1 \}.$$
This finishes the proof of Lemma~\ref{H=1}.


\begin{thebibliography}{99}

\bibitem{Brendle}
Simon Brendle, Fernando C. Marques, Andre Neves, Deformations of the hemisphere that increase scalar
curvature, Invent Math. 185(2011), 175-197.

\bibitem{ChenMaWang}
Shibing Chen, Xiang Ma, Shengyang Wang, Mean Curvature Rigidity and Non-rigidity Results on Spherical Caps, arXiv:2202.09824.

\bibitem{Fontenele}
F. Fontenele, Sergio L. Silva, A tangency principle and applications, Illinois J. Math. 45(2001), 213-228.

\bibitem{Gromov}
M. Gromov, Mean curvature in the light of scalar curvature, Ann. Inst. Fourier
(Grenoble) 69(7) (2019), 3169-3194.

\bibitem{HangFB-06}
Fengbo Hang, Xiaodong Wang, Rigidity and non-rigidity results on the sphere, Communications in
Analysis and Geometry, 14(1) (2006), 91-106.

\bibitem{MinOo}
M. Min-Oo, Scalar curvature rigidity of the hemisphere, unpublished, 1995.

\bibitem{Souam}
Rabah Souam, Mean curvature rigidity of horospheres, hyperspheres, and
hyperplanes, Arch. Math. 116 (2021), 115-120.

\end{thebibliography}
\end{document}